\newtheorem{lemma}{Lemma}[section]
\newtheorem{prop}[lemma]{Proposition}
\newtheorem{cor}[lemma]{Corollary}
\newtheorem{conj}[lemma]{Conjecture}
\newtheorem{claim*}{Claim}
\newtheorem{thm}[lemma]{Theorem}
\newtheorem{defn}[lemma]{Definition}
\theoremstyle{definition}
\newtheorem{remark}[lemma]{Remark}
\newtheorem{rmk}[lemma]{Remark}
\newcommand{\eq}{\asymp}
\DeclareMathOperator{\codim}{codim}
\DeclareMathOperator{\dimrel}{dim_{rel}}
\DeclareMathOperator{\rg}{rk}
\DeclareMathOperator{\Res}{\mathrm{Res}_{E/\Q}}
\DeclareMathOperator{\Gal}{\mathrm{Gal}}
\DeclareMathOperator{\red}{\mathrm{red}}
\newcommand{\G}{\mathbb{G}}
\newcommand{\N}{\mathbb{N}}
\newcommand{\Q}{\mathbb{Q}}
\newcommand{\Z}{\mathbb{Z}}
\newcommand{\Ql}{\mathbb{Q}_{\ell}}
\newcommand{\Zl}{\mathbb{Z}_{\ell}}
\newcommand{\Fl}{\mathbb{F}_{\ell}}
\newcommand{\Flam}{\mathbb{F}_{\lambda}}
\newcommand{\Gm}{\mathbb{G}_m}
\DeclareMathOperator{\End}{End}
\DeclareMathOperator{\Endzero}{End^{\circ}}
\newcommand{\Hg}{{\rm{Hg(A)}}}
\newcommand{\MT}{{\rm{MT(A) }}}
\newcommand{\Hl}{{\rm{H_{\ell} }}}
\newcommand{\Gl}{{\rm{G_{\ell} }}}
\newcommand{\Glam}{{\rm{G_{\lambda} }}}
\newcommand{\Vl}{{\rm{V}}_{\ell}}
\newcommand{\Tl}{{\rm{T}}_{\ell}}
\newcommand{\Tlamcal}{\mathcal{T}_{\lambda}}
\newcommand{\Vlam}{{\rm{V}}_{\lambda}}
\newcommand{\Tlam}{{\rm{T}}_{\lambda}}
\newcommand{\Al}{{\rm{A}}[\ell]}
\newcommand{\Aln}{{\rm{A}}[\ell^n]}
\newcommand{\Alinf}{{\rm{A}}[\ell^{\infty}]}
\newcommand{\Oe}{\mathcal{O}_E}
\newcommand{\Ol}{\mathcal{O}_{E_{\ell}} }
\newcommand{\Elam}{E_{\lambda} }
\newcommand{\Olam}{\mathcal{O}_{\lambda} }
\newcommand{\Kmuinf}{K(\mu_{\ell^{\infty}})}
\newcommand{\Hlam}{H_{\lambda} }
\newcommand{\tlam}{t_{\lambda}}
\newcommand{\rlam}{r_{\lambda}}
\newcommand{\mult}{{\rm{mult}}}
\newcommand{\GL}{{\rm{GL}}}
\numberwithin{equation}{section}
\numberwithin{table}{section}
\title{Torsion for abelian varieties of type III}
\author{Victoria Cantoral Farf\'{a}n}
\address{Institut de Math\'ematiques de Jussieu - Paris Rive Gauche (IMJ-PRG)
UP7D - B\^atiment Sophie Germain - 75205 Paris, France \& The Abdus Salam International Center for Theoretical Physics, 11 Strada Costiera, 34151 Trieste, Italy}
\email{vcantora@ictp.it}
\urladdr{http://webusers.imj-prg.fr/~victoria.cantoral-farfan/}
\subjclass[2010]{primary 11G10; secondary 11F80, 14K15, 14KXX}
\keywords{Abelian varieties, Galois representations, Mumford--Tate group, Torsion points}
\begin{document}

\maketitle

\today

\begin{abstract}

Let $A$ be an abelian variety defined over a number field $K$. The number of  torsion points that are rational over a finite extension $L$ is bounded polynomially in terms of the degree $[L:K]$ of $L$ over $K$. Under the following three conditions, we compute the optimal exponent for this bound in terms of the dimension of abelian subvarieties and their endomorphism rings: $(1)$ $A$ is geometrically isogenous to a product of simple abelian varieties of type I, II or III, according to the Albert classification; $(2)$ $A$ is of ``Lefschetz type'', that is, the Mumford--Tate group is the group of symplectic or orthogonal similitudes which commute with the endomorphism ring; $(3)$ $A$ satisfies the Mumford--Tate Conjecture. This result is unconditional for a product of simple abelian varieties of type I, II or III with specific relative dimensions. Further, building on work of Serre, Pink, Banaszak, Gajda and Kraso\'n, we also prove the Mumford--Tate Conjecture for a few new cases of abelian varieties of Lefschetz type.
\end{abstract}

\tableofcontents

\section{Introduction}\label{section 1}

Mordell-Weil's Theorem states that, for an abelian variety $A$ defined over a number field $K$, the group of $L$-rational points is finitely generated for any finite extension $L$ over $K$. 
It is of interest to know if we can obtain a bound for the number of torsion points that are rational over a finite extension $L$, that depends on the degree $[L:K]$ and the dimension of the abelian variety either when the abelian variety varies in a certain class or when the number field $L$ varies among all finite extensions of $K$. 
We will focus on the case where the abelian variety is fixed and the number field $L$ varies among all finite extensions of $K$; our main concern is to obtain an optimal bound which only depends on the degree of the number field $[L:K]$.  
Hindry and Ratazzi have given several results in this direction concerning some classes of abelian varieties (see \cite{HRce}, \cite{HR10} and \cite{HR}).
The aim of this paper is to present new results, which generalize the results of Hindry and Ratazzi. We consider the class of abelian varieties which are isogenous to a product of simple abelian varieties of type I, II or III (in the sense of Albert's classification) and such that each simple abelian variety is fully of Lefschetz type. This means that the Mumford--Tate Conjecture holds for each abelian variety and that the Mumford--Tate group is the group of symplectic or orthogonal similitudes which commute with the endomorphism ring. Recall that, when the abelian variety is simple of type III, it is known to carry exceptional Hodge classes, for further details we refer to \cite{Murty84}.

Let $A$ be an abelian variety defined over a number field $K$ of dimension $g$. Let $L$ be a finite extension of $K$. We would like to give an upper bound for the order of the finite group $A(L)_{\mathrm{tors}}$. In order to do so, let us define the following invariant, introduced by Ratazzi (see \cite{Nicolas}):

\begin{defn}The invariant $\gamma(A)$ is defined as follows:
$$
\gamma(A)= \inf \{ x>0 \,| \, \forall L/K \, \mathrm{finite}, \ |A(L)_{\mathrm{tors}}| \ll  [L:K]^x \},
$$
where the notation $\ll$ means that there exists a constant $C$ (which only depends on the abelian variety $A$ and $x$) such that 
$$|A(L)_{\mathrm{tors}}| \leq C [L:K]^x.$$
\end{defn}
More precisely $\gamma(A)$ is the optimal exponent: that means that it is the minimal value such that, for every positive $\varepsilon$, one has, for every finite extension $L/K$
$$
|A(L)_{\mathrm{tors}}| \ll  [L:K]^{\gamma(A)+\varepsilon}.
$$
Masser had already established an upper bound, which is polynomial on the degree of the number field $L$, see \cite{Masser}. More precisely, one has the the following bound:
$$
\gamma(A)\leq g.
$$
\begin{remark}
Masser's bound is optimal only in the case where $A$ is isogenous to a power of a CM elliptic curve.
\end{remark}
In order to present an optimal bound, Ratazzi gave, in the case of abelian varieties of CM type, an explicit formula for the invariant $\gamma(A)$ in terms of the characters of the Mumford--Tate group (see \cite[Th\'eor\`eme 1.10]{Nicolas}). Let us recall that the Mumford--Tate group and the Hodge group of an abelian variety $A$ are related by the following relation: 
$$\MT=\G_m\cdot \Hg.$$
Shortly after, Hindry and Ratazzi focused their attention on abelian varieties which are isogenous to a product of elliptic curves, they established an explicit formula for the invariant $\gamma(A)$ (see \cite[Th\'eor\`eme 1.6]{HRce}).
In 2010, the same authors gave an explicit formula of the invariant $\gamma(A)$ in the case where the abelian variety $A$ is isogenous to a product of simple abelian varieties of type $\rm{GSp}$, that means that they are simple abelian varieties such that the Mumford--Tate group is isogenous to the group of symplectic similitudes, and such that the Mumford--Tate Conjecture holds for these varieties (see \cite[Th\'eor\`eme 1.6]{HR10}). 
In the above reference, Hindry and Ratazzi stated the following conjecture:

\begin{conj}(\cite[Conjecture 1.1]{HR10})\label{conjecture HR}
Let $A$ be an abelian variety isogenous to a product of abelian varieties $\prod_{i=1}^d A_i^{n_i}$ where the $A_i$ are simple abelian varieties which are not pairwise isogenous. Then
$$\gamma(A)=\max_{\emptyset \neq I \subset \{1,...,d\}} \frac{2 \sum_{i\in I} n_i \dim A_i}{\dim \MT}.$$
\end{conj}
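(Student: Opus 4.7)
The plan is to prove the formula by establishing matching upper and lower bounds on $\gamma(A)$, reducing the torsion-counting problem to a group-theoretic one about subgroups of the Mumford--Tate group. Starting from the identity $|A(L)_{\mathrm{tors}}| = \prod_\ell |A[\ell^\infty]^{G_L}|$ together with $[G_K : G_L] \leq [L:K]$, conditions (2) and (3) in the abstract identify the $\ell$-adic Galois image $\Gl$ (up to finite index) with an open subgroup of the $\Ql$-points of $\MT$. Since $\MT = \Gm \cdot \Hg$ and $\Hg$ is a product of symplectic or orthogonal similitude groups commuting with $\Endzero(A)$, the problem becomes: for each closed subgroup $H$ of $\Gl$, estimate $\dim(\Vl A)^H$ in terms of $\codim H$.

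For the upper bound I would prove the following inequality, which is the technical heart of the argument: for any algebraic subgroup $H \subset \MT$, setting $I = \{i : (\Vl A_i)^H \neq 0\}$, one has
$$\dim (\Vl A)^H \;\leq\; \frac{2 \sum_{i \in I} n_i \dim A_i}{\dim \MT} \cdot (\dim \MT - \dim H).$$
I would proceed factor by factor: for each simple $A_i$, the Lefschetz hypothesis lets one classify subgroups of $\mathrm{GSp}$ or $\mathrm{GO}$ (commuting with the endomorphism action) whose invariants are of a prescribed dimension, leading to the local proportionality at the level of $A_i$. The multiplicities $n_i$ are absorbed by the diagonal action on $\Vl A_i^{\oplus n_i}$, and summing the local estimates yields the inequality. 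Applying it to the Galois image over a finite extension $L/K$ and summing over $\ell$ then gives $\gamma(A) \leq \max_I 2 \sum_{i \in I} n_i \dim A_i / \dim \MT$.

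For the lower bound, given a subset $I$, I would construct finite extensions $L_n/K$ whose Galois action trivializes $\bigoplus_{i \in I} \Vl A_i^{n_i}$ modulo $\ell^n$ while remaining as large as possible on the complementary factors. Concretely, with $H_I$ the pointwise stabilizer in $\MT$ of $\bigoplus_{i \in I} \Vl A_i^{n_i}$, the fixed field $L_n$ of a preimage of $H_I$ modulo $\ell^n$ inside $\Gl$ yields $|A(L_n)_{\mathrm{tors}}| \gg \ell^{2n \sum_{i \in I} n_i \dim A_i}$ and $[L_n:K] \ll \ell^{n (\dim \MT - \dim H_I)}$. A direct computation using the Lefschetz structure of $\MT$ shows that these two exponents are in the ratio appearing in the conjecture, and taking the maximum over $I$ supplies the matching lower bound.

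The main obstacle is the group-theoretic inequality in the upper-bound step, especially for type III factors: for $A_i$ of type III the Hodge group is an orthogonal similitude group over a totally real field, and the existence of exceptional Hodge classes (cf.\ \cite{Murty84}) makes the classification of subgroups of $\mathrm{GO}$ with large fixed subspaces strictly subtler than for $\mathrm{GSp}$ (the case treated by Hindry--Ratazzi in \cite{HR10}). A secondary but delicate point is to ensure that a subgroup $H$ of $\MT$ which does \emph{not} respect the product decomposition $\prod_i A_i^{n_i}$ cannot beat the block-diagonal optimum in the invariant-to-codimension ratio; this is what forces the formula to be a maximum over subsets $I$ rather than involving more exotic combinatorial data.
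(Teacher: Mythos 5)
The statement you set out to prove is only a conjecture in this paper (Hindry--Ratazzi, \cite{HR10}); the paper proves it just for products of simple abelian varieties of type I, II or III that are fully of Lefschetz type (Theorems \ref{theo1} and \ref{theo3}), which are the hypotheses your sketch silently assumes. Within that setting, the ``technical heart'' of your upper bound is false as stated. The denominator in the conjectured formula must be read as $\dim \mathrm{MT}(\prod_{i\in I}A_i)=1+\dim\mathrm{Hg}(\prod_{i\in I}A_i)$, i.e.\ it depends on $I$ (compare Theorem \ref{theo3} and \cite[Conjecture 1.1]{HR10}); it is not the fixed quantity $\dim\mathrm{MT}(A)$ that you use. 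Concretely, take $A=E_1\times E_2$ with $E_1,E_2$ non-isogenous elliptic curves without CM, so $\mathrm{MT}(A)=\GL_2\times_{\det}\GL_2$ has dimension $7$; for $H=\{1\}\times\SL_2$ one has $\dim(\Vl A)^H=2$, $I=\{1\}$, $\codim H=4$, and your inequality would assert $2\le\tfrac{2}{7}\cdot 4=\tfrac{8}{7}$. Your own lower-bound construction exposes the same problem: the stabilizer $H_I$ of $\bigoplus_{i\in I}\Vl A_i^{n_i}$ has $\dim\mathrm{MT}(A)-\dim H_I=\dim\mathrm{MT}(A_I)$ (this uses the product structure of the Hodge and $\ell$-adic groups, Ichikawa/Lombardo, as in the proof of Theorem \ref{theo3bis}), so your lower bound produces ratios with denominator $\dim\mathrm{MT}(A_I)$, which for proper subsets $I$ exceed the ``upper bound'' you claim --- the two halves of your argument are incompatible.

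Beyond this, the passage from an inequality about algebraic subgroups of $\mathrm{MT}(A)$ to a bound on $|A(L)_{\mathrm{tors}}|$ in terms of $[L:K]$, with constants uniform in $\ell$ and $L$, is exactly where the paper spends its effort and where your sketch is silent. The torsion fixed by $G_L$ is a finite group with mixed exponents, so the paper works with finite subgroups $H\subset\Alinf$, their filtrations by saturated submodules $W_t\subset\dots\subset W_1$, and the index estimate $(\Hg(\Zl):G(H))\gg\ll\ell^{\sum_i d_i(m_i-m_{i-1})}$ of Lemma \ref{lemme fusion}, proved via Oesterl\'e's theorem together with smoothness of the stabilizers for large $\ell$ and a separate argument for small $\ell$; the cyclotomic contribution (the ``$+1$'' in the denominator) comes from the property $\mu$ and Lemma \ref{lemma delta}, i.e.\ from whether the relevant submodule lies in a maximal isotropic subspace, and Serre's independence criterion is what lets one multiply over the primes. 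Note also that the paper never needs the ``classification of subgroups of $\mathrm{GO}$ with large fixed subspaces'' that you flag as the main obstacle: since the group that occurs is the pointwise stabilizer of the fixed submodule, only the codimension formula of Theorem \ref{th codimension} for such stabilizers is required, plus a combinatorial optimization (\cite[Lemma 2.7]{HR10}). Your plan, which estimates $\dim(\Vl A)^H$ for arbitrary subgroups $H$, is closer in spirit to Zywina's independent proof \cite{David} and would require genuinely harder group-theoretic input than the paper's argument.
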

Conjecture \ref{conjecture HR} holds for abelian varieties of type I and II which are fully of Lefschetz type, see \cite[Th\'eor\`eme 1.6]{HR}, and for abelian varieties which are isogenous to a product of abelian varieties of type I or II (see \cite[Th\'eor\`eme 1.14]{HR}).

Let us introduce some notations concerning the abelian variety $A$. Denote by $D=\Endzero(A):=\End_{\overline{K}}(A)\otimes \Q$ the endomorphism algebra of $A$ and let $E=Z(D)$ be the center of $D$. Let us recall Albert's classification:
\begin{itemize}
\item \textbf{Type I:} $D=E$ is a totally real field of degree $e$, more precisely $g=eh$.

\item \textbf{Type II:} $D$ is a totally indefinite quaternion algebra over a totally real field $E$ of degree $e$, more precisely $g=2eh$.

\item \textbf{Type III:} $D$ is a totally definite quaternion algebra over a totally real field $E$ of degree $e$, more precisely $g=2eh$.

\item \textbf{Type IV:} $D$ is a division algebra over a CM field $E$.
\end{itemize}
For further information we refer the reader to \cite[Chap. IV]{Mum}. The integer $h$ introduced in each case corresponds to the following notation:
\begin{defn}
Let $A$ be an abelian variety of dimension $g$ of type I, II or III. Then the relative dimension of $A$ is the following integer:
$$
h:=\dimrel A=\left\{
\begin{aligned}
& \frac{g}{e} \quad \text{if $A$ is of type I,}\\
& \frac{g}{2e} \quad \text{if $A$ is of type II or III,}
\end{aligned}\right.
$$
where $e=[E:\Q]$ and $d^2=[D:E]$.
\end{defn}
Let $\phi$ be a polarization of the abelian variety $A$. It is known that the Hodge group $\Hg$ of $A$ is contained in $\mathrm{Sp}(D,\phi)$, where $\mathrm{Sp}(D,\phi)$ is the group of symplectic similitudes which commute with the endomorphisms in $D$ and preserve the polarization.
In the case where the abelian variety $A$ is of type I or II, we know that the Hodge group is always contained in $\Res\mathrm{Sp}_{2h}$. 
Moreover, when the abelian variety $A$ is of type III, we know that the Hodge group is always contained in $\Res\mathrm{SO}_{2h}$. This result follows from \cite{BGK} and further details will be found in section \ref{section 4}. In some cases, we have equality. For instance when
\begin{itemize}
\item $A$ is of type I or II: $\Hg=\Res\mathrm{Sp}_{2h}$ when $h=2$ or an odd integer (see \cite[Th\'eor\`eme 3.7]{HR} and \cite{BGKIetII}) and the Mumford--Tate Conjecture holds for $A$;
\item $A$ is of type III: $\Hg=\Res\mathrm{SO}_{2h}$ when $h$ is in the set $\{2k+1, k \in \N\}\setminus \left\{\frac{1}{2}\binom{2^{m+2}}{2^{m+1}}, m\in \N\right\}$ (see \cite{BGK} and Corollary \ref{corollaire A simple}) and the Mumford--Tate Conjecture holds for $A$.
\end{itemize}

Let $\ell$ be a prime number, we denote by $\Tl(A)= \varprojlim \Aln$ the $\ell$-adic Tate module of $A$. We consider the $\ell$-adic representation attached to the action of the absolute Galois group $G_K=\Gal(\overline{K}/K)$ over the $\ell^{\infty}$-torsion points:
$$
\rho_{\ell}:G_K \to \GL(\Tl(A)).
$$
Let $\Gl$ be the $\Ql$-algebraic group defined as the Zariski closure of the image of $\rho_{\ell}$, it is known as the $\ell$-adic monodromy group of $A$. Let $\Vl(A)=\Tl(A)\otimes \Ql$.
\begin{defn}
The abelian variety $A$ is fully of Lefschetz type if
\begin{itemize}
\item the inclusion $\MT\subset \mathcal{L}(A)$ is an equality,
\item the Mumford--Tate Conjecture hold for $A$, i.e. $\MT\otimes_{\Q} \Ql=\Gl$.
\end{itemize}
\end{defn}
Let us recall that the Lefschetz group $\mathcal{L}(A)$ of $A$ is the group of symplectic similitudes which commute with the endomorphisms sometimes denoted by $\mathrm{GSp}(D,\phi)$, for further details see \cite{MilneLC}. 
Moreover, from \cite[Theorem 4.3]{LP95} we know that, if the Mumford--Tate Conjecture holds for a prime number $\ell$, then it holds for every prime number.

In this paper, we prove Conjecture \ref{conjecture HR} in the case of abelian varieties of type III which are fully of Lefschetz type. 

\begin{thm}\label{theo1}
Let $A$ be a simple abelian variety defined over a number field $K$ of type III and dimension $g$. We have $e=[E:\Q]$, $d=2$ and $h$ is the relative dimension. We suppose that $A$ is fully of Lefschetz type. Then the invariant $\gamma(A)$ is equal to 
$$
\gamma(A)=\frac{2 d e h}{1 + eh(2h-1)}= \frac{2 \dim A}{\dim \rm \MT}.
$$ 
\end{thm}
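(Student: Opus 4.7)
The plan is to adapt the framework developed by Hindry--Ratazzi in \cite{HR,HR10} for types I and II to the orthogonal setting required here. The proof splits into a lower bound $\gamma(A) \geq \frac{2\dim A}{\dim \MT}$ and an upper bound $\gamma(A) \leq \frac{2\dim A}{\dim \MT}$.

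For the \emph{lower bound} I would fix a prime $\ell$ of good reduction for $A$, unramified in $E$, and consider the tower $L_n := K(A[\ell^n])$. Since $A$ is fully of Lefschetz type, the image of $\rho_\ell$ is open in $(\MT \otimes \Ql)(\Ql)$, hence $[L_n : K] \ll \ell^{n \dim \MT}$, while $|A(L_n)_{\mathrm{tors}}| \geq \ell^{2gn}$. Letting $n \to \infty$ forces $\gamma(A) \geq \frac{2g}{\dim \MT}$, which matches the claimed value.

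For the \emph{upper bound}, a standard reduction lets us work one prime $\ell$ at a time. Writing $H_\ell \subset \Gl(\Zl)$ for the image of $\Gal(\overline{K}/L)$, we have $A(L)[\ell^\infty] = (\Tl(A))^{H_\ell} / \Tl(A)$. The decomposition $E \otimes \Ql = \prod_{\lambda \mid \ell} \Elam$ induces an isotypic splitting $\Tl(A) = \bigoplus_\lambda \Tlamcal$; under the Lefschetz hypothesis, the identity component of $\Glam$ is (up to isogeny) the similitude group $\mathrm{GO}(W_\lambda)$ acting on an $\Elam$-vector space $W_\lambda$ of dimension $2h$, with $\Vlam \cong W_\lambda^{\oplus d}$ for $d = 2$ coming from the quaternion multiplicity. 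The core estimate, in the spirit of Serre--Pink, is a bound for the size of the $H$-fixed part of $\Tlamcal$ in terms of the index of $H$ in an open subgroup of $\mathrm{GO}(W_\lambda)(\Olam)$. Combining these bounds across all $\lambda$ and all $\ell$, and accounting for the $d$-fold isotypic multiplicity, yields the exponent $\frac{4eh}{1 + eh(2h-1)} = \frac{2\dim A}{\dim \MT}$.

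The \emph{main obstacle} is the counting step for $\mathrm{GO}_{2h}$. In the symplectic case (types I and II), a generic lattice vector has pointwise stabilizer of codimension $2h$ in $\mathrm{Sp}_{2h}$, and the exponent $\frac{2h}{\dim \mathrm{GSp}_{2h}}$ emerges directly. For $\mathrm{SO}_{2h}$, however, preservation of a nondegenerate quadratic form forces the stabilizer of a vector $v$ to contain $\mathrm{SO}_{2h-1}$, of codimension only $2h - 1$, which would spoil the exponent if taken naively. Showing that the correct exponent is nevertheless $\frac{2h}{\dim \mathrm{GO}_{2h}}$ requires a careful orbit analysis for open subgroups of $\mathrm{GO}(W_\lambda)(\Olam)$ acting on $W_\lambda$, distinguishing isotropic from anisotropic vectors; this, together with verifying (via Banaszak--Gajda--Kraso\'n) that the image of $\rho_\lambda$ really is open in $\mathrm{GO}(W_\lambda)(\Olam)$, is the technical heart of the argument.
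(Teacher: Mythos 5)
Your overall framework (Serre's independence criterion to work one prime at a time, the $\lambda$-adic decomposition with quaternionic multiplicity $d=2$, and the lower bound from the tower $K(A[\ell^n])$) does match the paper's strategy, and the lower bound argument is fine. But the upper bound, which is where all the content lies, is left as an acknowledged obstacle rather than proved, and the obstacle itself is partly misdiagnosed. The paper needs three quantitative inputs that your sketch does not supply: (i) the pointwise stabilizer in the orthogonal group of an arbitrary saturated submodule $W$ of corank $d$ has dimension $\tfrac{d(d-1)}{2}$ \emph{whether or not} $W$ meets $W^{\perp}$ (Theorem \ref{th codimension}); so, contrary to your concern, isotropy is irrelevant to the stabilizer dimension, and the rank-one codimension is $2h-1$ in both cases; (ii) a two-sided index estimate $(\Hg(\Zl):G(H))\gg\ll \ell^{\sum_i d_i(m_i-m_{i-1})}$ valid for \emph{all} primes, including the small ones where the stabilizer schemes need not be smooth, obtained via Oesterl\'e's theorem and Serre (Lemma \ref{lemme fusion}); (iii) the cyclotomic ``property $\mu$'' analysis (Lemma \ref{lemma delta}), which computes $\delta(H)=[K(\mu_{\ell^m}):K]$: this is where the isotropic/non-isotropic dichotomy actually enters, through the Weil pairing of points of $H$, not through the size of vector stabilizers. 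Note also that the similitude factor is global ($\MT=\G_m\cdot\Hg$), so per $\lambda$ one works with $\mathrm{Res}_{\Elam/\Ql}\mathrm{SO}_{2h}$ rather than independent full orthogonal similitude groups; this matters precisely for the bookkeeping of $\delta(H)$.

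Moreover, your worry that the codimension $2h-1$ of a vector stabilizer ``would spoil the exponent'' is not resolved by an orbit analysis on vectors but by a combinatorial maximization over all finite subgroups $H\subset\Alinf$ and their filtrations $W_{t_\lambda}\subset\cdots\subset W_1$: one has $|H|=\ell^{\,2\sum_{\lambda}f(\lambda)\sum_i \alpha_{\lambda,i}m_\lambda^i}$ (the factor $2$ coming from the multiplicity $d=2$) and $[K(H):K]\gg\ll \ell^{\,\sum_\lambda f(\lambda)\sum_i d_i(m_\lambda^{t_\lambda-(i-1)}-m_\lambda^{t_\lambda-(i-1)+1})}\cdot\delta(H)$ with $d_i=\tfrac{r_i(4h-1-r_i)}{2}$, and maximizing the ratio via \cite[Lemma 2.7]{HR10} reduces to comparing the isotropic regime (ranks $\le h$, $\delta=0$, maximum $\tfrac{4}{3h-1}$) with the non-isotropic regime (ranks up to $2h$, $\delta=1$, maximum $\tfrac{4eh}{1+e(2h^2-h)}$); the latter dominates, which both yields $\gamma(A)=\tfrac{2\dim A}{\dim\MT}$ and shows that small-rank subgroups never give a worse exponent. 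Without the corank formula for arbitrary submodules, the index estimate at all $\ell$, the computation of $\delta(H)$, and this optimization (plus the treatment of the finitely many primes in $\mathcal{S}$), your outline establishes only the lower bound and does not yet prove the theorem.
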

The numerical value of $\gamma(A)$ is clearly sharper than the bound given by Masser, which would be $deh$. Note that the value we obtain for $\gamma(A)$ coincides with the conjectured value $\frac{2 \dim A}{\dim \rm \MT}$ and it will be useful to remark that the proof has a combinatorial component, which give to us the first equality. A posteriori, one can see that it corresponds with the conjectured value. 

Finally, we can generalize Conjecture \ref{conjecture HR} to the case where $A$ is isogenous to a product of simple abelian varieties of type I, II or III and fully of Lefschetz type. More precisely we present a new result where only type IV, in the sense of Albert's classification, is excluded.\\
\begin{thm}\label{theo3}
Let $A$ be an abelian variety defined over a number field $K$ isogenous over $\overline{K}$ to $\prod_{i=i}^{d}A_i^{n_i}$ where the abelian varieties $A_i$ are pairwise non isogenous over $\overline{K}$ of dimension $g_i$ and $n_i\geq 1$. We suppose that the abelian varieties $A_i$ are simple, not of type IV and fully of Lefschetz type. 
For every non empty subset $I\subset \{1,...,d\}$ we denote $A_I:=\prod_{i\in I} A_i$. Moreover we define $e_i=[E_i:\Q]$ where $E_i=Z(\Endzero(A_i))$, $h_i= \dimrel A_i$  and
$$
d_i=\left\{
\begin{aligned}
&1 \text{ if }A_i \text{ is of type I,}\\
&2 \text{ if }A_i \text{ is of type II or III,}
\end{aligned}\right.
\qquad
\eta_i=\left\{
\begin{aligned}
&1 \text{ if }A_i \text{ is of type I or II,}\\
&-1 \text{ if }A_i \text{ is of type III.}
\end{aligned}\right.
$$
Then one has:
$$
\gamma(A)=\max_{I} \left\{ \frac{2\sum_{i\in I} n_i d_i e_i h_i}{1 + \sum_{i\in I} e_i (2 h_i^2 +\eta_i h_i)}\right\}=\max_{I}\left\{ \frac{2\sum_{i\in I} n_i\dim A_i}{1 + \dim \rm Hg(\prod_{i\in I} A_i)}\right\}.
$$
\end{thm}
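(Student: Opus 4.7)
The plan is to combine Theorem \ref{theo1} with the machinery of Hindry and Ratazzi \cite{HR}, who treated the analogous statement when only types I and II appear. The Mumford--Tate conjecture, part of the Lefschetz-type hypothesis, lets us work throughout with $G_\ell(A_I) = \mathrm{MT}(A_I) \otimes \Ql$ for each non-empty $I \subseteq \{1, \ldots, d\}$. Because the $A_i$ are pairwise non-isogenous and each of Lefschetz type, the Hodge group of $A_I = \prod_{i \in I} A_i$ is the product $\prod_{i \in I} \mathrm{Hg}(A_i)$ glued to a single similitude $\mathbb{G}_m$, and so $\dim \mathrm{Hg}(A_I) = \sum_{i \in I} e_i(2h_i^2 + \eta_i h_i)$. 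This identifies the denominator in the theorem, with the extra $+1$ coming from the cyclotomic/similitude character.

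For the lower bound, I would fix a non-empty $I$ and a prime $\ell$ split in each $E_i$ for $i \in I$, and consider the tower $L_n := K(A_I[\ell^n])$. Standard open-image estimates yield $[L_n : K] \asymp \ell^{n(1 + \dim \mathrm{Hg}(A_I))}$, while $A(L_n)$ contains a subgroup of order at least $\ell^{2n \sum_{i \in I} n_i \dim A_i}$ because the $n_i$ copies of each $A_i$ share a common Galois action. This forces
$$
\gamma(A) \geq \frac{2 \sum_{i \in I} n_i \dim A_i}{1 + \dim \mathrm{Hg}(A_I)};
$$
taking the max over $I$ gives the desired lower bound.

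For the upper bound, one reduces to controlling $\prod_{\ell} \prod_{i=1}^{d} |A_i(L)[\ell^\infty]|^{n_i}$ (with a finite set of ``bad'' primes absorbed into the implicit constant). At each pair $(i, \ell)$ a local bound expresses $|A_i(L)[\ell^\infty]|$ through an index inside $G_\ell(A_i)$: Theorem \ref{theo1} supplies this bound at the type III factors, while \cite{HR} provides it for types I and II. A Goursat-type analysis of the image of $G_K$ in $\prod_i \rho_\ell(G_K)$ then allows the per-component bounds to be recombined; the outcome is that for each $L$ one can select an optimal subset $I(L) \subseteq \{1, \ldots, d\}$ whose contribution to the torsion dominates.

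The principal obstacle is this combinatorial step: one must show that no mixture of contributions from different indices can exceed the value $\gamma(A_{I(L)})$ achieved by the optimal pure choice. This reduces to a mediant-type inequality for the ratios $\frac{2 \sum_{i \in I} n_i \dim A_i}{1 + \dim \mathrm{Hg}(A_I)}$ as $I$ varies, which in the framework of \cite{HR} follows from the strict growth of the denominator $1 + \sum_{i \in I} e_i(2h_i^2 + \eta_i h_i)$ under enlarging $I$ combined with the additivity of the numerator. The introduction of type III factors only flips the sign $\eta_i$ at certain indices, replacing $\mathrm{Sp}_{2h_i}$ by $\mathrm{SO}_{2h_i}$; the combinatorial argument carries over \emph{mutatis mutandis} once Theorem \ref{theo1} is invoked to supply the local bound at each type III index.
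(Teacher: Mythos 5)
Your overall route --- lower bound from the division fields $K(A_I[\ell^n])$, reduction to $\ell$-primary subgroups via Serre's independence criterion, per-factor index estimates coming from Theorem \ref{theo1} for the type III factors and from Hindry--Ratazzi for types I and II, and a final combinatorial maximization over subsets $I$ --- is essentially the route the paper takes, since its proof of this theorem consists of reducing to the techniques of \cite[Th\'eor\`eme 1.14]{HR} combined with the proof of Theorem \ref{theo1}.

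There is, however, a genuine gap, and it is exactly the step the paper regards as the actual content of the proof. You assume from the outset that $\mathrm{Hg}(A_I)=\prod_{i\in I}\mathrm{Hg}(A_i)$ and that the Mumford--Tate conjecture holds for every sub-product $A_I$ (``lets us work throughout with $G_\ell(A_I)=\mathrm{MT}(A_I)\otimes\mathbb{Q}_\ell$''). The hypothesis only grants that each \emph{simple} factor $A_i$ is fully of Lefschetz type; the Mumford--Tate conjecture is not formally stable under products, and the splitting of the Hodge group of a product genuinely uses that no factor is of type IV and that the factors are pairwise non-isogenous. The paper supplies precisely this missing ingredient: Ichikawa's theorem \cite[Theorem 1A]{Ich} gives $\mathrm{Hg}(A_i\times A_j)=\mathrm{Hg}(A_i)\times\mathrm{Hg}(A_j)$, and Lombardo's theorem \cite[Theorem 4.1]{LomHl} gives $\mathrm{H}_\ell(A_i\times A_j)=\mathrm{H}_\ell(A_i)\times\mathrm{H}_\ell(A_j)$ for all $\ell$, whence each $A_I$ is again fully of Lefschetz type. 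Both halves of your argument rest on this: the estimate $[K(A_I[\ell^n]):K]\asymp\ell^{\,n(1+\dim\mathrm{Hg}(A_I))}$ needs it, and your ``Goursat-type'' recombination of per-factor bounds is nothing other than openness of the Galois image in $\prod_i\mathrm{H}_\ell(A_i)$, i.e.\ Lombardo's statement; Goursat alone does not rule out entanglement between the factors. Secondarily, your ``mediant-type'' justification of the combinatorial step is too quick: the $+1$ contributed by the cyclotomic character is shared by all factors, so a mixed configuration can beat every singleton (which is why the answer is a maximum over all subsets $I$), and the true reduction to corner configurations is the filtration argument via \cite[Lemma 2.7]{HR10} together with the monotonicity in the ranks within each factor and the isotropic/non-isotropic dichotomy, as carried out in Part 2 of the proof of Theorem \ref{theo1}, not monotonicity of the denominator as $I$ grows.
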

Recently, Zywina proved Conjecture \ref{conjecture HR} in the case of abelian varieties which verify the Mumford--Tate Conjecture (see \cite[Theorem 1.1]{David}). This result, proved independently, is more general than our theorem. Nevertheless let us remark that our method is more precise in the sense that it gives more information about the dimension of the stabilizers involved. Moreover, our method allows us to obtain a better lower bound for the degree of the field extension generated by a torsion point; this kind of information cannot be deduced from Zywina's result. 
\begin{thm}\label{theo4}
Let $A$ be a simple abelian variety defined over a number field $K$ of type III, with relative dimension $h$ and fully of Lefschetz type. Then there exists a constant $c_1:=c_1(A,K)>0$ such that, for every torsion point $P$ of order $m$ in $A(\overline{K})$, one has:
$$
[K(P):K]\geq c_1^{\omega(m)}m^{2h},
$$
where $\omega(m)$ is the number of prime factors that divide $m$.
\end{thm}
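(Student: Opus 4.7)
The plan is to reduce to the prime-power case via a combinatorial step, then analyze each prime factor using the Lefschetz property. First I would decompose $m = \prod_{\ell \mid m} \ell^{n_\ell}$ and write $P = \sum_{\ell} P_\ell$ with $P_\ell \in A[\ell^{n_\ell}]$ of exact order $\ell^{n_\ell}$, so that $K(P)$ is the compositum of the fields $K(P_\ell)$. For distinct primes $\ell \neq \ell'$, the fields $K(\Alinf)$ and $K(A[{\ell'}^{\infty}])$ are almost linearly disjoint over $K$ (their intersection lies in a cyclotomic extension of degree bounded independently of the primes, via the Weil pairing and the Lefschetz structure of the center), so a Bezout-type inclusion--exclusion yields
\[
[K(P):K] \;\geq\; c_0^{\omega(m)} \prod_{\ell \mid m} [K(P_\ell):K]
\]
for a constant $c_0 = c_0(A,K) > 0$. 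It then suffices to prove $[K(P_\ell):K] \geq c \cdot \ell^{2h\, n_\ell}$ at each prime $\ell$, uniformly in $\ell$.

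For this prime-power bound, the full Lefschetz hypothesis identifies $\Gl$ with $\mathrm{MT}(A) \otimes \Ql$, which for type III is isogenous to $\mathrm{Res}_{\El/\Ql}\mathrm{GSO}_{2h}$. Bogomolov's openness theorem shows that $\rho_\ell(G_K)$ is open in $\Gl(\Z_\ell)$ with uniformly bounded index outside a finite set of exceptional primes, so $[K(P_\ell):K]$ equals, up to a bounded constant, the size of the orbit of $P_\ell$ under $\Gl(\Z_\ell)$. Decomposing $\Alinf = \bigoplus_{\lambda \mid \ell} A[\lambda^{\infty}]$ for $\lambda$ a prime of $E$ above $\ell$, and using the Morita identification $A[\lambda^{\infty}] \cong \Wlam \otimes_{\Olam} \Olam^{\oplus 2}$ coming from the splitting of the quaternion algebra locally, the task reduces to bounding below the orbit of a non-zero $v \in \Wlam / \lambda^{n_\lambda}\Wlam$ under $\mathrm{GSO}(\Wlam)(\Olam)$.

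For $v$ whose reduction modulo $\lambda$ is non-isotropic for the quadratic form $Q$, the surjectivity of the similitude multiplier onto $\Olam^\times$ (via the cyclotomic character) together with a Hensel-type count of the level sets $\{v' : Q(v') \in \Olam^\times \cdot Q(v)\}$ gives an orbit of size $\asymp q_\lambda^{n_\lambda \cdot 2h}$, where $q_\lambda = |\Oe/\lambda|$. Choosing the dominant $\lambda$ with $n_\lambda f_\lambda = n_\ell$ then yields the required $\ell^{2h n_\ell}$, and reassembling via the first step produces the theorem. The main obstacle is the case where the reduction of $v$ mod $\lambda$ lies on the isotropic cone of $Q$: a naive orbit--stabilizer count there only gives $q_\lambda^{n_\lambda(2h-1)}$. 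Resolving this requires a finer stratification by the $\lambda$-adic valuation of $Q(v)$ (points of exact order $\lambda^{n_\lambda}$ generically have small such valuation), together with exploiting the doubled structure $A[\lambda^{\infty}] \cong \Wlam^{\oplus 2}$ to force most torsion points to project nontrivially to a non-isotropic direction. Uniformly controlling this case and absorbing the finite exceptional set of primes into the constant $c_1$ is the technical heart of the argument.
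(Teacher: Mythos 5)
Your skeleton is the same as the paper's: write $P=\sum_{\ell\mid m}P_\ell$ with $P_\ell$ of order $\ell^{n_\ell}$, multiply the degrees over the primes, and prove a uniform prime-power bound $[K(P_\ell):K]\gg \ell^{2h\,n_\ell}$ by an orbit/stabilizer count for the image of Galois inside $\mathrm{MT}(A)(\Z_\ell)$. For the product step the paper does not argue linear disjointness through cyclotomic intersections as you do (that claim, as you state it, is not justified); it invokes Serre's independence criterion for the family of $\ell$-adic representations \cite{Serre}, which after one fixed finite extension $K'/K$ gives $[K'(P):K']=\prod_\ell [K'(P_\ell):K']$ with a single bounded loss, and the prime-power bound is then quoted as a consequence of the codimension computations of Theorem~\ref{th codimension} together with the cyclotomic analysis (Lemma~\ref{lemma delta}), i.e. exactly the count you set up.

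The genuine gap is the one you yourself flag and defer: the isotropic case of the prime-power bound is never proved, and the rescue you sketch cannot work for all points. Take $\lambda\mid\ell$ of residue degree one and a point whose $\lambda$-component under the Morita splitting is $(v,0)$ with $v$ primitive and isotropic modulo $\lambda^{n}$; such points of exact order $\ell^{n}$ exist since the symmetric form on the rank-$2h$ module is isotropic mod $\ell$ for all large $\ell$. The $\End(A)$-module generated by this point is the rank-one module $\langle v\rangle$ doubled, so its fixator in $\mathrm{SO}_{2h}$ has codimension $\frac{1\cdot(4h-2)}{2}=2h-1$ by Theorem~\ref{th codimension}, and since $Q(v)\equiv 0$ the similitude multiplier is unconstrained (this is precisely the case $\delta=0$ in Lemma~\ref{lemma delta}); the Galois orbit of the point is therefore contained in the primitive isotropic vectors and has size $\asymp \ell^{(2h-1)n}$, with a matching upper bound. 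No stratification by the valuation of $Q(v)$, and no appeal to the doubled structure (both copies carry the same isotropic line), can recover the exponent $2h$ for these points, so your argument as written only yields $[K(P_\ell):K]\gg\ell^{(2h-1)n_\ell}$ in that case. You have in fact isolated the one step the paper itself passes over in silence—its proof simply asserts $[K(P_i):K]\gg\ell_i^{2hn_i}$ uniformly as a consequence of the method of the main theorem—but acknowledging the obstacle is not overcoming it, so the proposal does not establish the stated bound.
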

In the case of a product $A\simeq \prod_{i=1}^d A_i^{n_i}$ we obtain the same results with $h=\min_{i} h_i$.

Let us remark that one of the main hypotheses of Theorems \ref{theo1} and \ref{theo3} is that the abelian variety $A$ must be fully of Lefschetz type, in particular, that the Mumford--Tate Conjecture must hold for $A$. In this direction, the following two theorems give examples of abelian varieties that are fully of Lefschetz type.
The following theorem gives a correction of a subtle point of \cite[Theorem 5.11]{BGK} as has been pointed out by \cite[Remark 2.27]{LomHl}:

\begin{thm}\label{theo5}
Let $A$ be a simple abelian variety of dimension $g$ and of type III. Let us recall that $g=2eh$ where $e=[E:\Q]$ and $h$ is the relative dimension. We assume that  
$$
h\in \{2k+1, k \in \N\}\setminus \left\{\frac{1}{2}\binom{2^{m+2}}{2^{m+1}}, m\in \N\right\}.
$$
Then $A$ is fully of Lefschetz type.
\end{thm}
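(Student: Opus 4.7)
To verify that $A$ is fully of Lefschetz type one must establish two independent facts: (a) $\Hg = \Res\mathrm{SO}_{2h}$, and (b) the Mumford--Tate conjecture $\Hg \otimes \Ql = \Hl$. For (a), the inclusion $\Hg \subset \Res\mathrm{SO}_{2h}$ is the type III Lefschetz inclusion recalled before the theorem statement. The reverse containment is the content of Corollary \ref{corollaire A simple}, whose proof follows the strategy of \cite{BGK}: one identifies $\Hg$ after base change to $\overline{\Q}$ with a connected reductive subgroup of $\prod_{\sigma: E \hookrightarrow \overline{\Q}} \mathrm{SO}_{2h}$, projects to each factor, and uses that the Hodge cocharacter is minuscule together with the arithmetic hypothesis on $h$ to force surjectivity on each factor.

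For (b), Deligne's absolute Hodge theorem for abelian varieties gives $\Hl \subset \Hg \otimes \Ql$, and it suffices to prove the reverse inclusion. The plan is to combine three ingredients: Faltings' theorem, which says the centralizer of $\Gl$ in $\End(\Vl)$ equals $D \otimes \Ql$ and hence constrains $\Gl$ to lie inside the $\Ql$-points of $\Res\mathrm{GO}_{2h}$; Bogomolov's theorem, which guarantees that $\Gl$ contains the homotheties; and Pink's classification of connected reductive subgroups of $\mathrm{GL}(\Vl)$ acting via a minuscule representation and preserving an orthogonal form of dimension $2h$. Under the arithmetic hypotheses on $h$ this classification forces the derived group of $\Gl^{\circ}$ to equal $\Res\mathrm{Spin}_{2h} \otimes \Ql$, and combined with (a) this identifies $\Gl^{\circ}$ with $\Hg \otimes \Ql$, yielding the Mumford--Tate conjecture.

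The main obstacle --- and precisely the substance of the correction of \cite[Thm.~5.11]{BGK} flagged in \cite[Rmk.~2.27]{LomHl} --- is the representation-theoretic step invoked in the previous paragraph. Besides the standard representation of $\mathrm{SO}_{2h}$, the only simple group admitting a faithful self-dual minuscule representation of dimension $2h$ over $\overline{\Ql}$ is $\mathrm{SL}_{2^{m+2}}$ acting on $\Lambda^{2^{m+1}}(\mathrm{std})$, which has dimension $\binom{2^{m+2}}{2^{m+1}} = 2h$ precisely when $h = \frac{1}{2}\binom{2^{m+2}}{2^{m+1}}$, together with spin and half-spin representations whose dimensions are powers of $2$ and therefore cannot equal $2h$ when $h$ is odd. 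Our hypothesis that $h$ is odd and outside $\{\frac{1}{2}\binom{2^{m+2}}{2^{m+1}}\}_{m \in \N}$ is exactly what is needed to rule out all such accidental alternatives and complete the argument.
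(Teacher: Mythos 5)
Your overall route is in fact the paper's: Theorem \ref{theo5} is proved by rerunning the argument of \cite[Theorem 5.11]{BGK} (Faltings for the commutant, Bogomolov for the homotheties, minuscule Hodge/Frobenius cocharacters), and the only genuinely new content is the corrected form of \cite[Lemma 4.13]{BGK}, namely the determination of which orthogonal minuscule representations can occur in dimension $2h$ -- you have correctly located the crux there. One small attribution problem: invoking Corollary \ref{corollaire A simple} to get $\Hg=\Res\mathrm{SO}_{2h}$ is circular, since that corollary is deduced from Theorems \ref{theo5} and \ref{theo6}; what is actually used is again the BGK-type argument you then sketch, so this is a mis-citation rather than a structural flaw.

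The genuine gap is that the key representation-theoretic claim is asserted rather than proved, and its proof is exactly the new ingredient of the theorem. You claim that, besides the standard representation of $\mathrm{SO}_{2h}$ and the (half-)spin representations (powers of $2$, excluded since $h$ is odd), the only exceptional candidate is $\mathrm{SL}_{2^{m+2}}$ acting on $\Lambda^{2^{m+1}}(\mathrm{std})$. But the table of minuscule weights produces an orthogonal representation of $A_{4k+3}$ of dimension $\binom{4k+4}{2k+2}$ for \emph{every} $k\geq 0$, while the hypothesis of the theorem only excludes $h=\tfrac12\binom{2^{m+2}}{2^{m+1}}$, i.e.\ the cases $k+1=2^m$. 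To conclude one must show that for all other $k$ the number $\tfrac12\binom{4k+4}{2k+2}$ is even, hence already ruled out by the parity of $h$; this is the $2$-adic valuation computation (the valuation of $\binom{4k+4}{2k+2}$ equals the number of $1$'s in the binary expansion of $k+1$, so $\tfrac12\binom{4k+4}{2k+2}$ is odd iff $k+1$ is a power of $2$) that the paper isolates as a separate lemma inside the proof of Lemma \ref{lemme413}. Your sentence ``has dimension $\binom{2^{m+2}}{2^{m+1}}=2h$ precisely when $h=\tfrac12\binom{2^{m+2}}{2^{m+1}}$'' is a tautology and does not supply this. The same dimension-mod-$4$ bookkeeping is also what forces the tensor decomposition of $\overline{\mathcal{V}}_{\lambda}$ to have a single (orthogonal) simple factor -- a point your ``only simple group'' phrasing silently assumes. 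With that arithmetic lemma and the single-factor remark added, your plan coincides with the paper's proof; a minor further quibble is that the derived group of the monodromy group inside $\GL(\Vl)$ should be identified with (the image) $\Res\mathrm{SO}_{2h}\otimes\Ql$ rather than with $\mathrm{Spin}$.
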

This theorem is stated in \cite{BGK} with the hypothesis that $h$ is odd. Nevertheless, it seems to be important to exclude the values of the form $\frac{1}{2}\binom{2^{m+2}}{2^{m+1}}$ with $m\in \N$ (see section \ref{section 5} for details).
Let us consider the following sets of numbers:
\begin{equation}\label{set sigma}
\Sigma:=\left\{g\geq 1; \exists k\geq 3, \exists a\geq 1, g=2^{k-1}a^k \right\} \bigcup \left\{g\geq 1; \exists k\geq 3, 2g=\binom{2k}{k}  \right\}.
\end{equation}

\begin{equation}\label{sigmaset}
\begin{aligned}
\Sigma':=\bigcup_{s>1} & \left\{ \left\{2^{(4k+3)s-1}, \, k\geq 0\right\} \cup \left\{2^{4ks-1}, \, k\geq 1\right\} \cup \left\{2^{2s(4k+1)-1}, \, k\geq 1\right\} \cup \left\{2^{2s-1}k^{2s}, \, k\geq 2\right\}\right.\\
&\left. \cup \left\{\frac{1}{2}\big(\binom{4k+4}{2k+2}\big)^s, \, k\geq 0\right\} \cup \left\{\frac{1}{2}\big(\binom{4k+2}{2k+1}\big)^{2s}, \, k\geq 0\right\}\right\}.
\end{aligned}
\end{equation}
We prove the following theorem which generalizes some results of \cite{Pink98}:

\begin{thm}\label{theo6}
Let $A$ be a simple abelian variety of type III such that the center of $\Endzero(A)$ is $\Q$. Assume that the relative dimension $h$ is not in the set $\Sigma'$. Then, the abelian variety is fully of Lefschetz type. In particular, the Mumford--Tate Conjecture holds for the abelian variety $A$.
\end{thm}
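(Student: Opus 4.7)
The plan is to classify the connected $\ell$-adic monodromy group $G_{\ell}^{0}$ directly, in the style of Pink, and sandwich it together with the Hodge and Lefschetz groups. Since $E=\Q$, there is no restriction of scalars to contend with: the commutant of the quaternion endomorphism algebra $D$ in $\mathrm{End}_{\Ql}(\Vl)$ is the orthogonal similitude group $\mathrm{GSO}_{2h}$, with symmetric form induced from the polarization on the $D$-coinvariants. One has $G_{\ell}^{0} \subseteq \mathrm{GSO}_{2h}$ and $\Hg \subseteq \mathrm{SO}_{2h}$ (from \cite{BGK}, recalled in the introduction), together with the Deligne/Borovoi inclusion $G_{\ell}^{0} \subseteq \MT \otimes \Ql$. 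The goal is to show $G_{\ell}^{0} = \mathrm{GSO}_{2h}$; combined with the chain $\MT \otimes \Ql \subseteq \mathcal{L}(A)\otimes \Ql = \mathrm{GSO}_{2h}$, this forces all three groups to coincide, yielding simultaneously the Mumford--Tate Conjecture and the Lefschetz equality $\MT = \mathcal{L}(A)$.

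The heart of the proof is a dimension estimate for the derived subgroup $G_{\ell}^{\mathrm{der}}$. By Faltings, $G_{\ell}^{\mathrm{der}}$ is reductive; its tautological representation on $\Vl \otimes \overline{\Ql}$ commutes with $D$ and preserves the induced symmetric form, so on each simple factor it decomposes as an exterior tensor product of faithful irreducible self-dual representations of simple algebraic groups whose invariant form is orthogonal, with total dimension $2h$. Following the template of \cite{Pink98}, I would run Weyl's dimension formula through all simple Lie types and identify every configuration producing a proper subgroup of $\mathrm{SO}_{2h}$ acting on a $2h$-dimensional orthogonal module. Each such configuration translates into a numerical constraint on $h$: tensor products of standard representations of $\mathrm{SO}_k$ or $\mathrm{Sp}_k$ give the family $2^{2s-1}k^{2s}$; the spin representations of $\mathrm{Spin}_n$, whose self-duality type alternates with $n \bmod 8$, give the binomial families $\tfrac{1}{2}\binom{4k+4}{2k+2}^{s}$ and $\tfrac{1}{2}\binom{4k+2}{2k+1}^{2s}$; the residual two-power families in \eqref{sigmaset} arise from small exceptional low-dimensional coincidences. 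Excluding $h \in \Sigma'$ therefore eliminates every exceptional possibility and forces $G_{\ell}^{\mathrm{der}} = \mathrm{SO}_{2h}$, after which Bogomolov's theorem supplies a one-dimensional central torus (coming from the cyclotomic character) to upgrade the identification to $G_{\ell}^{0} = \mathrm{GSO}_{2h}$.

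The main obstacle is precisely this classification step: one must adapt the Pink-style method, developed originally for the symplectic setting and used in this paper for types I and II, to the orthogonal setting required here. The delicate points are (i) tracking when a self-dual representation of a simple group is orthogonal rather than symplectic, which for spin representations of $\mathrm{Spin}_n$ toggles according to $n \bmod 8$, and (ii) accounting for every exterior tensor-product factorization, since the tensor of two factors is orthogonal precisely when both factors are orthogonal or both are symplectic. These two phenomena together dictate the exact shape of $\Sigma'$, and the genuinely combinatorial work is in verifying that no orthogonal representation of dimension $2h$ of a non-$\mathrm{SO}_{2h}$ simple group (or product thereof, acting via tensor product) escapes the six families listed in \eqref{sigmaset}.
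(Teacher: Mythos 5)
Your overall architecture (sandwich $G_{\ell}^{0}$ between $\MT\otimes\Ql$ and the Lefschetz group, show the derived group is all of $\mathrm{SO}_{2h}$, then add the central torus via Bogomolov) matches the paper, but the heart of your argument has a genuine gap: you propose to classify all faithful irreducible self-dual \emph{orthogonal} representations of dimension $2h$ of semisimple groups "by running Weyl's dimension formula through all simple Lie types," and to read off $\Sigma'$ from that classification. That classification is far larger than $\Sigma'$: there are many non-minuscule orthogonal irreducibles whose dimensions are not in $\Sigma'$ — for instance the adjoint representation of $\mathrm{Sp}_4$ is a $10$-dimensional orthogonal irreducible, while $h=5\notin\Sigma'$, and adjoint or symmetric-power representations give infinitely many further dimensions. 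So excluding $h\in\Sigma'$ does not, by itself, force $G_{\ell}^{\mathrm{der}}=\mathrm{SO}_{2h}$. The ingredient you are missing is Pink's notion of a (strong) Mumford--Tate pair of weights $\{0,1\}$: the Hodge/Frobenius cocharacter condition forces every simple tensor factor of the representation to be \emph{minuscule}, and it is only after this reduction that the table of minuscule weights (with the orthogonal/symplectic duality bookkeeping and the parity constraint on the number of symplectic factors) produces exactly the six families making up $\Sigma'$. This is how the paper proceeds: it uses \cite[Theorem 3.23]{BGK} for absolute irreducibility and orthogonality of $\Vlam$, Pink's Propositions 4.4--4.5 adapted to the orthogonal case, and \cite[Fact 5.9]{Pink98} — where the hypothesis $E=\Q$ is used to upgrade weak Mumford--Tate pairs to strong ones — before the combinatorial enumeration over minuscule weights.

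A secondary, fixable slip: your attribution of the families in $\Sigma'$ is swapped. The halved central binomial coefficients $\tfrac{1}{2}\binom{4k+4}{2k+2}^{s}$ and $\tfrac{1}{2}\binom{4k+2}{2k+1}^{2s}$ come from the middle fundamental (exterior-power) representations of type $A_{\ell}$, not from spin representations; the spin and half-spin representations of types $B_{\ell}$ and $D_{\ell}$ are precisely what produce the power-of-two families $2^{(4k+3)s-1}$, $2^{4ks-1}$, $2^{2s(4k+1)-1}$, with their duality type governed by $\ell$ modulo $4$; and the family $2^{2s-1}k^{2s}$ comes from even tensor powers of the standard symplectic representation of type $C_{k}$.
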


The following corollary gives a criterion for abelian varieties of type III to be fully of Lefschetz type in terms of the relative dimension:\\

\begin{cor}\label{corollaire A simple}
Let $E$ be a totally real number field of degree $e=[E:\Q]$. Let $A$ be an abelian variety defined over a number field such that $A$ is simple of type III and the center of the endomorphisms algebra of $A$ is $E$. Assume that one of the following hypotheses hold:

\begin{enumerate}
\item The relative dimension $h$ is in the set $\{2k+1, k \in \N\}\setminus \left\{\frac{1}{2}\binom{2^{m+2}}{2^{m+1}}, m\in \N\right\}$.

\item We have $e=1$ and the relative dimension $h$ is not in $\Sigma'$.
\end{enumerate}
Then $A$ if fully of Lefschetz type. In particular one has
$$
\gamma(A)=\frac{2dhe}{1+e(2h^2-h)}=\frac{2\dim\,A}{\dim \, \Hg+1}.
$$
\end{cor}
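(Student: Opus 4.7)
The corollary is essentially a packaging of the preceding results; no new ideas are needed, only a careful verification of the dimension identities. My plan is as follows.

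First I would treat hypothesis (1). Since $A$ is simple of type III and $h\in \{2k+1,\,k\in\N\}\setminus\{\tfrac12\binom{2^{m+2}}{2^{m+1}},\,m\in\N\}$, Theorem \ref{theo5} applies directly and shows that $A$ is fully of Lefschetz type. Next, for hypothesis (2), the assumption $e=[E:\Q]=1$ means that the center of $\Endzero(A)$ is $\Q$, and $h\notin \Sigma'$, so Theorem \ref{theo6} applies and again $A$ is fully of Lefschetz type. In either case, the Mumford--Tate Conjecture holds for $A$ and $\Hg = \Res \SO_{2h}$.

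Having established that $A$ is fully of Lefschetz type in both cases, I would then invoke Theorem \ref{theo1}, which gives
\[
\gamma(A) \;=\; \frac{2\, d\, e\, h}{1 + eh(2h-1)} \;=\; \frac{2\dim A}{\dim \MT}.
\]
The only remaining task is to rewrite this in the form stated in the corollary. Since $A$ is of type III we have $d=2$ and $\dim A = 2eh = deh$, so $2deh = 2\dim A$. For the denominator, the Hodge group $\Hg = \Res_{E/\Q}\SO_{2h}$ has dimension $e\cdot \dim \SO_{2h} = e\cdot h(2h-1) = e(2h^2 - h)$, and since $\MT = \G_m \cdot \Hg$ we have $\dim \MT = 1 + \dim \Hg = 1 + e(2h^2 - h)$. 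Putting these together,
\[
\gamma(A) \;=\; \frac{2deh}{1 + e(2h^2 - h)} \;=\; \frac{2\dim A}{\dim \Hg + 1},
\]
which is exactly the claimed formula.

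There is no real obstacle: the work has already been done in Theorems \ref{theo1}, \ref{theo5} and \ref{theo6}. The only point requiring care is to check that the two displayed expressions for $\gamma(A)$ in the statement agree, which reduces to the elementary identities $deh = 2eh = \dim A$ and $\dim \Hg = eh(2h-1)$ for type III Lefschetz Hodge groups.
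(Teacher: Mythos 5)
Your proposal is correct and matches the paper's own argument: case (1) is Theorem \ref{theo5}, case (2) is Theorem \ref{theo6} (the $e=1$ hypothesis giving center $\Q$), and the value of $\gamma(A)$ then follows from Theorem \ref{theo1} together with the elementary identities $\dim A = deh = 2eh$ and $\dim \Hg = e(2h^2-h)$, $\dim \MT = 1 + \dim \Hg$. The Ichikawa--Lombardo product results the paper also cites are only needed for the product version (Corollary \ref{corollaire A produit}), so omitting them here is fine.
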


\begin{remark}
In the first point of the previous corollary we are excluding the following odd values for the integer $h$: 3, 35, 6435 (when $h\leq 10^6$). In the second case, when the abelian variety is simple of type III such that $E=\Q$, the previous corollary excludes the following $21$ possible values of $g=2h\leq 10^3$: 
$$
\begin{aligned}
\Sigma'=& \{4, 6, 8, 16, 36, 64, 70, 100, 128, 144, 196, 216, 256, 324, 400, 484, 512, 576, 676, 784, 900 \}.
\end{aligned}
$$
Note that when $g\leq 10^6$ the set $\Sigma'$ has $513$ elements.
\end{remark}
Using the previous results and some results of Ichikawa \cite{Ich} and Lombardo \cite{LomHl} one can prove that the product of simple abelian varieties of type I, II or III that are fully of Lefschetz type is also fully of Lefschetz type. We can therefore extend Corollary \ref{corollaire A simple} to the case of an abelian variety isogenous to a product of abelian varieties of type I, II or III.\\

\begin{cor}\label{corollaire A produit}Let $A$ be an abelian variety defined over a number field $K$ isogenous over $\overline{K}$ to $\prod_{i=i}^{d}A_i^{n_i}$ where the abelian varieties $A_i$ are pairwise non isogenous over $\overline{K}$ of dimension $g_i$ and $n_i\geq 1$. We suppose that the abelian varieties $A_i$ are simple and not of type IV. Assume that one of the following hypotheses hold:

\begin{enumerate}
\item The relative dimensions $h_i$ are 

\subitem - odd or equal to $2$ when the abelian variety $A_i$ is of type I or II, and 
\subitem - $h_i\in\left\{2k+1, k \in \N\right\}\setminus \left\{\frac{1}{2}\binom{2^{m+2}}{2^{m+1}}, m\in \N\right\}$ when $A_i$ is of type III.

\item We have $e_i=1$ (\textit{i.e.} $E_i=\Q$) and 
\subitem - $h_i$ is not in the set $\Sigma$ when $A_i$ is of type I or II, and 
\subitem - $h_i$ is not in the set $\Sigma'$ when $A_i$ is of type III.
\end{enumerate}
Then $A$ if fully of Lefschetz type. In particular, with the notations of Theorem \ref{theo3} one has

$$
\gamma(A)=\max_{I} \left\{ \frac{2\sum_{i\in I} n_i d_i e_i h_i}{1 + \sum_{i\in I} e_i (2 h_i^2 +\eta_i h_i)}\right\}=\max_{I}\left\{ \frac{2\sum_{i\in I} n_i\dim A_i}{1 + \dim \rm Hg(\prod_{i\in I} A_i)}\right\}.
$$ 
\end{cor}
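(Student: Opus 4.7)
The plan is to reduce to Theorem~\ref{theo3} by verifying that $A$ is fully of Lefschetz type, after which the claimed formula for $\gamma(A)$ is immediate. This verification splits into two steps: establishing the property for each simple factor individually, and then combining the factors.

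For the single-factor step, each $A_i$ is fully of Lefschetz type under either hypothesis. Under hypothesis~(1), if $A_i$ is of type I or II with $h_i$ odd or $h_i = 2$ this is \cite[Théorème 3.7]{HR} combined with \cite{BGKIetII}, and if $A_i$ is of type III this is Theorem~\ref{theo5}. Under hypothesis~(2), so that $E_i = \Q$, the set $\Sigma$ (respectively $\Sigma'$) is precisely the exceptional set for the type I/II (respectively type III) criterion: the type I/II case is covered by the results that the earlier theorems of the paper rely on, and the type III case is exactly Theorem~\ref{theo6}. Hence in every case I may assume that each simple factor $A_i$ is fully of Lefschetz type.

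For the product step, I need to show that if the simple factors $A_i$ are pairwise non-isogenous, not of type IV, and each fully of Lefschetz type, then $A \cong \prod_i A_i^{n_i}$ is also fully of Lefschetz type. Since the Mumford--Tate group of a power agrees with that of the underlying factor, $\mathrm{MT}(A)$ embeds into the fibre product over $\Gm$ of the $\mathrm{MT}(A_i)$. A theorem of Ichikawa~\cite{Ich}, valid for products of pairwise non-isogenous simple abelian varieties not of type IV, identifies the Hodge group of $A$ with this fibre product; hence $\mathrm{MT}(A)$ equals the full fibre product as well. Since each $\mathrm{MT}(A_i) = \mathcal{L}(A_i)$ and the Lefschetz group of $A$ admits the same fibre-product description (it is built from the endomorphism algebra and polarization, both of which split along the factors), one concludes that $\mathrm{MT}(A) = \mathcal{L}(A)$. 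To obtain the Mumford--Tate conjecture for $A$, I would invoke Lombardo's results in~\cite{LomHl}: given that each factor satisfies the conjecture and is not of type IV, these imply that the $\ell$-adic monodromy group $G_\ell(A)$ coincides with $\mathrm{MT}(A) \otimes \Ql$ under the same fibre-product identification.

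The main obstacle is verifying that the hypotheses of Ichikawa's and Lombardo's theorems genuinely apply to the mixed product at hand, in particular ensuring that no exceptional isogeny or unexpected Hodge/Tate class distorts the fibre-product picture. Excluding type IV is essential here, since CM factors carry exceptional Hodge classes that would otherwise enlarge the centralizers and break the independence arguments underlying Lombardo's result. Once $A$ is established to be fully of Lefschetz type, Theorem~\ref{theo3} applies verbatim and yields the stated formula for $\gamma(A)$.
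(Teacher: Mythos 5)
Your proposal is correct and follows essentially the same route as the paper: each simple factor is handled by Theorems \ref{theo5}/\ref{theo6} (type III) and the Hindry--Ratazzi/Banaszak--Gajda--Kraso\'n criteria (types I and II), the product is shown to be fully of Lefschetz type via Ichikawa's Hodge-group splitting together with Lombardo's $\ell$-adic analogue, and then Theorem \ref{theo3} gives the formula for $\gamma(A)$. This matches the paper's argument, including the role of excluding type IV in applying Ichikawa and Lombardo.
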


This paper is organized as follows: in \cref{section 3} we present some lemmas of group theory. In \cref{vraie section 3} we describe $\lambda$-adic pairings coming from the Weil pairing as well as some properties of the $\ell$-adic Galois representations like the so called ``property $\mu$''. This property corresponds to the study of the cyclotomic part of those Galois representations. In \cref{section 4} we present the proofs of Theorems \ref{theo1} and \ref{theo3}. In \cref{cinq} we prove Theorem \ref{theo4}, and in the last section we present the proof of the new cases of the Mumford--Tate Conjecture in particular Theorems \ref{theo5} and \ref{theo6} and Corollaries \ref{corollaire A simple} and \ref{corollaire A produit}.


\section{Group lemmas}\label{section 3}

Let $V$ be a vector space endowed with a symplectic form (respectively a quadratic form $Q$), $G$ be the group of symplectic similitudes (respectively orthogonal), $\phi$ the bilinear form and $Q$ the quadratic form. The following calculation is perhaps already known but we have not found it in the literature.

\begin{thm}\label{th codimension}
Let $W$ be a vector subspace of $V$ of codimension $d$. Let us consider the stabilizer $G_W$ of $W$:
$$
G_W=\{g\in G, g_{|W}=id_W\}.
$$
Then
$$
\dim(G_W)=\frac{d(d+\varepsilon)}{2}, \quad \text{where} \quad \varepsilon=\left\{\begin{aligned}1 \quad &\text{in the symplectic case},\\ -1 \quad &\text{in the orthogonal case}.\end{aligned}\right.
$$
\end{thm}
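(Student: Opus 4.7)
The plan is to identify $G_W$ with the set of ``shear'' elements $g = \mathrm{id}+X$ and compute the dimension by linearizing at the identity. Since $G$ is an algebraic group over a field of characteristic zero, its closed subgroup $G_W$ is smooth, and so $\dim G_W$ equals the dimension of its Lie algebra $\mathfrak{g}_W = \{X \in \mathfrak{g} : X|_W = 0\}$, where $\mathfrak{g}$ denotes the Lie algebra of $G$. As long as $W$ is not totally isotropic, any $g \in G_W$ automatically has trivial multiplier: for $v,w \in W$ one has $\phi(v,w) = \phi(gv, gw) = \mu(g)\phi(v,w)$, forcing $\mu(g)=1$, so one may reduce to the isometry Lie algebra $\mathfrak{sp}(V,\phi)$ or $\mathfrak{o}(V,Q)$, characterized in both cases by $\phi(Xv,w)+\phi(v,Xw)=0$ for all $v,w\in V$.

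Next I would show that $X \in \mathfrak{g}_W$ factors through $V/W$ with image in $W^\perp$. Indeed, $X|_W=0$ gives the factoring through $V/W$, and setting $w\in W$ in the identity $\phi(Xv,w)+\phi(v,Xw)=0$ yields $\phi(Xv,w)=0$ for all $v\in V$, $w\in W$, i.e.\ $X(V)\subset W^\perp$. Thus $X$ defines a linear map $\tilde X: V/W \to W^\perp$, and the ambient space of such maps has dimension $d^2$ since both source and target are $d$-dimensional.

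The non-degeneracy of $\phi$ on $V$ descends to a perfect pairing $\phi: W^\perp \times (V/W) \to k$, which identifies $W^\perp$ with $(V/W)^{\ast}$. Under this identification, $\tilde X$ corresponds to a bilinear form $B$ on $V/W$ given by $B(\bar v,\bar w) := \phi(\tilde X \bar v, w)$ for any lift $w$ of $\bar w$ (well-defined since $\tilde X\bar v \in W^\perp$). The remaining constraint $\phi(Xv,w)+\phi(v,Xw)=0$ then becomes, after applying the (anti)symmetry of $\phi$ to the second term, the statement that $B(\bar v,\bar w) = \varepsilon B(\bar w,\bar v)$, with $\varepsilon=+1$ in the symplectic case (because $\phi$ is alternating, so $\phi(v,Xw)=-\phi(Xw,v)=-B(\bar w,\bar v)$) and $\varepsilon=-1$ in the orthogonal case (because $\phi$ is symmetric).

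The space of bilinear forms on a $d$-dimensional vector space satisfying $B(\bar v,\bar w)=\varepsilon B(\bar w,\bar v)$ has dimension $d(d+\varepsilon)/2$, which gives the announced formula. The main technical point is the sign bookkeeping in the previous step: one has to track carefully how the alternating vs.\ symmetric nature of $\phi$ interacts with the infinitesimal isometry condition to produce the opposite symmetry type on $B$. Once this translation is made, the rest is routine linear algebra.
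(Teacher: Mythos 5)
Your argument is correct, and it takes a genuinely different route from the paper. You linearize at the identity: using smoothness of $G_W$ in characteristic zero you reduce to computing $\mathfrak{g}_W=\{X\in\mathfrak{g} : X_{|W}=0\}$, show that such an $X$ factors through a map $V/W\to W^{\perp}$, and use the perfect pairing $W^{\perp}\times V/W\to k$ induced by $\phi$ to identify $\mathfrak{g}_W$ with the space of bilinear forms $B$ on $V/W$ satisfying $B(\bar v,\bar w)=\varepsilon B(\bar w,\bar v)$; your sign bookkeeping (symmetric $B$ in the symplectic case, alternating $B$ in the orthogonal case) is right, and this treats both cases uniformly in one stroke. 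The paper instead argues at the group level in the orthogonal case only (quoting Hindry--Ratazzi for the symplectic case): it splits into the case $W\cap W^{\perp}=\{0\}$, where $G_W$ is an orthogonal group of $W^{\perp}$, and the case $W\cap W^{\perp}\neq\{0\}$, which is handled by induction on the codimension via the fibration $g\mapsto g(w_1')$ of $G_W$ over the variety $X=\{y : Q(y)=0,\ \phi(w,y)=\phi(w,w_1')\ \forall w\in W\}$ with fibre $G_{W+\Pi}$, plus a tangent-space computation giving $\dim X=d-1$. Your approach avoids the surjectivity/generic-fibre issues of that fibration and is arguably cleaner; the paper's approach stays closer to the group and to the mod-$\ell^m$ counting that follows. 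One caveat: your reduction to the isometry Lie algebra uses that $\phi_{|W}$ is not identically zero, and when $W$ is totally isotropic the pointwise stabilizer in the full similitude group genuinely picks up one extra dimension from the multiplier (e.g.\ $\mathrm{diag}(1,\mu)$ fixes an isotropic line in the split plane), so the formula as stated must then be read inside the isometry group $\mathrm{Sp}$ or $\mathrm{O}$ --- which is exactly how the theorem is used later (stabilizers inside $\mathrm{Hg}$, with the multiplier handled separately by $\delta(H)$), and your Lie-algebra computation applies verbatim to every $W$, isotropic or not, in that setting. The paper's own proof silently makes the same normalization (its map into $X$ presupposes trivial multiplier), so this is a defect of the statement rather than of your argument, but it would be worth one sentence in your write-up.
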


\begin{remark}
Let us point out that what we call a stabilizer is called sometimes a fixator.
\end{remark}

\begin{proof}

We are going to prove this theorem in the orthogonal case. A similar computation of the codimension in the symplectic case can be found in  \cite[Paragraph 6, Remark 1]{HR}. Let $W$ be a vector subspace of $V$ of codimension $d$ and dimension $r$. We are going to split the proof into two parts. First, let us suppose that $W\cap W^{\perp} = \{0\}$, then 
$$
G_W=\left\{ \left( \begin{array}{c|c} 
I_r & 0 \\ \hline
0 & \rm{GO}_{d} 
\end{array}\right) \right\}.
$$
Therefore, $G_W \simeq \rm{GO}_{(W^{\perp}, \phi_{|W^{\perp}} )}$, and 
$$
\dim\, (G_W)=\frac{d(d-1)}{2}.
$$
Let us assume now that $W\cap W^{\perp} \neq \{0\}$, take $w_1 \in (W \cap W^{\perp})\setminus\{0\}$ and let $w_1^{'}\in  V\setminus W$ such that $\Pi:=<w_1,w_1^{'}>$ is a hyperbolic plane. One can prove by induction that $\dim\, (G_W)=\frac{d(d-1)}{2}$ using some well known results from algebraic geometry. Let $W^{'}=W + \Pi$; it is a subspace of codimension $d^{'}=d-1$, hence, by induction, we have that $\dim\, (G_{W^{'}})=\frac{(d^{'})(d^{'}-1)}{2}$. We introduce the following variety
$$
X:=\left\{y\in V, Q(y)=0 \text{ and } \, \phi(w,y)=\phi(w,w_1^{'}), \, \text{for all}\, w\in W \right\}.
$$
Let us consider the surjective map $f: G_W \to X$ defined by $f(g)=g(w_1^{'})$. The dimension of $G_W$ is equal to the dimension of $X$ plus the dimension of the generic fiber, that is:
$$
\dim\,  G_W = \dim \, X + \dim \, G_{W+\Pi}.
$$
Then using the Jacobian criterion we obtain
$$
\dim \, X= \dim \, T_y (X)= n - (r+1)=d-1.
$$
Finally
$$
\dim \, G_W = d-1 + \frac{(d-1)(d-2)}{2}=\frac{d(d-1)}{2}.
$$

\end{proof}

Let $H$ be a finite subgroup of $\Alinf$, then there exists an integer $n\in \N^{\ast}$, such that 
$$H\subset \Aln.$$ 
By the structure theorem of abelian $\ell$-groups we know that one can decompose $H$ in the following way:
\begin{equation}\label{decompositionH}
H=\prod_{i=1}^t (\Z/\ell^{m_{t-(i-1)}} \Z)^{\alpha_i}\subset \Aln,
\end{equation}
where $m_1<...<m_t$ is a strictly increasing sequence of integers, $\alpha_i$ are positive integers and $1\leq t\leq 2g$. 
For every $1\leq i\leq t$ we can consider the natural projection:
$$
\pi_{m_i}: \Tl(A) \twoheadrightarrow \Tl(A)/ \ell^{m_i}\Tl(A).
$$
Therefore we can associate to $H\subset \Aln$ a filtration of saturated submodules $W_t \subset ... \subset W_1$ of $\Tl(A)$ such that 
\begin{equation}\label{eq:filtration}
\pi_{m_i}(W_i)=H[\ell^{m_i}].
\end{equation}
For instance, using the previous decomposition \eqref{decompositionH} of $H$, we have:
\begin{equation}
\pi_{m_1}(W_1)=H[\ell^{m_1}]\simeq (\Z/\ell^{m_1}\Z)^{\alpha_1 + ... \alpha_t} \quad \text{and} \quad \pi_{m_t}(W_t)=H[\ell^{m_t}]\simeq (\Z/\ell^{m_t}\Z)^{\alpha_1}.
\end{equation}
We can associate to each submodule $W_i$ its stabilizer $G_{W_i}$ and we can describe it in the following way:
\begin{equation}
G_{W_i}=\{g\in \MT(\Zl), \, g(x)=x, \; \text{for all} \, x \in W_i\}.
\end{equation} 
We notice that $G_{W_1}\subset ... \subset G_{W_t}$.
Let us denote by $G(H)$ the stabilizer of $H$ and let us remark that $\rho_{\ell}(\Gal(\overline{K}/K(H)))$ can be identified with $G(H)$, where $K(H)$ is the extension generated by $H$. We can give the following description of $G(H)$ up to some finite index: 
\begin{equation}
G(H)=\{M\in \MT(\Zl), \, M\in G_{W_i} \mod \ell^{m_i}, \, \text{for }\, 1\leq i \leq t \}.
\end{equation}
This description of the stabilizer $G(H)$ allows us to introduce the following lemma. Actually, what it is important to remark is that the stabilizers $G_{W_i}$ are defined by some equations of bounded degree.

\begin{lemma}\label{lemme fusion}
Let $H$ be a finite subgroup of $\Alinf$, let $G$ be a subscheme over $\Z$ of $\rm GL_{2g,\Z}$, let $t$ be an integer and let $G_{W_1}\subset ... \subset G_{W_t}$ be $t$ subschemes over $\Zl$ of $G_{\Zl}$ defined as above of codimension $d_i=\codim G_{W_i}$.
For every prime number $\ell$ we have the following equality:
\begin{equation}\label{estimation}
(G(\Zl):G(H))\gg\ll_{A} \ell^{\sum_{i=1}^{t} d_i \big(m_i-m_{i-1}\big)},
\end{equation}
where $G(H)$ is the stabilizer of the subgroup $H$ and the notation $\gg\ll_{A}$ means that we have the equality modulo some multiplicative constant which depends on $A$.
\end{lemma}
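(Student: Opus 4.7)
My plan is to pass to the finite-level quotient modulo $\ell^{m_t}$, rewrite the index $(G(\Zl):G(H))$ as the index of a subset of $G(\Z/\ell^{m_t}\Z)$, and then count that subset by lifting step-by-step along the nested chain $G_{W_1}\subset \cdots \subset G_{W_t}$.

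First I would observe that $G(H)$ contains the kernel of the reduction $G(\Zl)\to G(\Z/\ell^{m_t}\Z)$: if $M\equiv \mathrm{Id}\pmod{\ell^{m_t}}$ then $M\equiv \mathrm{Id}\pmod{\ell^{m_i}}$ for every $i$, and the identity lies in every $G_{W_i}$. Since $G$ is smooth over $\Zl$ for every $\ell$ outside a finite set $S(A)$ depending only on $A$, Hensel's lemma makes this reduction surjective for such $\ell$, so
\[
(G(\Zl):G(H))\;=\;\bigl(G(\Z/\ell^{m_t}\Z):\overline{G(H)}\bigr),
\]
where $\overline{G(H)}$ denotes the image of $G(H)$ in $G(\Z/\ell^{m_t}\Z)$. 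For the finitely many primes $\ell\in S(A)$ the two sides of \eqref{estimation} are independently bounded within a constant depending on $A$, which can be absorbed into the implied constant.

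Setting $n=\dim G$, so that $\dim G_{W_i}=n-d_i$, smoothness (for $\ell\notin S(A)$, after possibly enlarging $S(A)$) gives $|G(\Z/\ell^{m}\Z)|\asymp_{A}\ell^{mn}$ and, more importantly, that the reductions $G_{W_i}(\Z/\ell^{m}\Z)\twoheadrightarrow G_{W_i}(\Z/\ell^{m-1}\Z)$ are surjective with fibres of cardinality $\ell^{n-d_i}$. I would then count $\overline{G(H)}$ inductively. Choose $M_1\in G_{W_1}(\Z/\ell^{m_1}\Z)$, giving $\asymp_A \ell^{m_1(n-d_1)}$ possibilities. For $i\geq 2$, lift the previously constructed $M_{i-1}$ to some $M_i\in G_{W_i}(\Z/\ell^{m_i}\Z)$ with $M_i\equiv M_{i-1}\pmod{\ell^{m_{i-1}}}$: such a lift exists because the inclusion $G_{W_{i-1}}\subseteq G_{W_i}$ places $M_{i-1}$ inside $G_{W_i}(\Z/\ell^{m_{i-1}}\Z)$, and smoothness of $G_{W_i}$ yields $\asymp_A \ell^{(m_i-m_{i-1})(n-d_i)}$ admissible lifts.

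Multiplying these counts with the convention $m_0=0$ then gives
\[
|\overline{G(H)}|\;\asymp_{A}\;\ell^{\sum_{i=1}^{t}(m_i-m_{i-1})(n-d_i)}\;=\;\ell^{n m_t - \sum_{i=1}^{t}d_i(m_i-m_{i-1})},
\]
and dividing into $|G(\Z/\ell^{m_t}\Z)|\asymp_{A}\ell^{nm_t}$ recovers the estimate \eqref{estimation}. The main obstacle is uniformity of the implied constants in $\ell$: this boils down to the fact that each $G_{W_i}$ is cut out of $G$ by the linear conditions $g\cdot x = x$ for $x\in W_i$, so the defining equations have degrees bounded solely in terms of $A$, and then Hensel's lemma together with the point-count $|X(\F_\ell)|\asymp\ell^{\dim X}$ for smooth $X$ apply uniformly for $\ell\notin S(A)$.
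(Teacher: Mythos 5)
Your lifting argument for good primes is essentially the same mechanism the paper invokes: for $\ell$ large the stabilizers are smooth over $\Zl$ (the paper gets this from \cite[Lemma 2.13]{LomHl}) and the index is then counted exactly as in \cite[Lemma 2.4]{HR10}, which your Hensel/step-by-step lift reproves. Two caveats there: the fibre-count $\ell^{(m_i-m_{i-1})(n-d_i)}$ requires smoothness of each $G_{W_i}$ over $\Zl$ uniformly in the submodule $W_i$ (the $W_i$ vary with $H$), and ``the defining equations have bounded degree'' does not by itself give smoothness outside a fixed finite set of primes; this uniformity is exactly what the citation to Lombardo supplies, so you should quote it rather than wave at bounded degrees.

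The genuine gap is your treatment of the bad primes. You claim that for $\ell\in S(A)$ ``the two sides of \eqref{estimation} are independently bounded within a constant depending on $A$.'' That is false: the lemma is a statement for every finite $H\subset\Alinf$, so for a fixed bad $\ell$ the exponents $m_i$ are unbounded and both $(G(\Zl):G(H))$ and $\ell^{\sum_i d_i(m_i-m_{i-1})}$ tend to infinity; no constant depending only on $A$ can absorb them. What is actually needed at a bad prime is the estimate with constants depending on $\ell$ (hence on $A$, since there are finitely many such $\ell$) but uniform in $H$ and in the $m_i$, and this cannot be obtained from your smooth-lifting count, since at these primes the reduction maps need not be surjective and the fibres need not have the expected size. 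The paper devotes Lemma \ref{lisse-l-petit} to precisely this point: the upper bound $|Y_m|\ll \ell^{m\dim Y}$ comes from Oesterl\'e's theorem (Theorem \ref{oesterle}), and the lower bound comes from fibering $G$ over $G/G(W)$ via Chevalley's theorem together with Serre's lower bound $|G_m|\gg_{\ell}\ell^{m\dim G}$ from \cite{SerreApplications}. Without an argument of this kind (or some substitute handling non-smooth reduction), your proof establishes the lemma only for $\ell$ outside a finite set, which is strictly weaker than the statement ``for every prime number $\ell$'' that is needed later in the paper.
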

First let us recall \cite[Th\'eor\`eme 1]{Oesterle}:
\begin{thm}[Oesterl\'e]\label{oesterle}
Let $Y\subset \Zl^N$ of dimension $r$ defined by some equations of degree less or equal than $d$. Then, for every $m\geq 1$ one has 
$$
|Y_m|\leq c(N,d,r) \ell^{mr},
$$
where $Y_m:=im(\red: Y \to (\Z/\ell^m \Z)^N)$ and $c(N,d,r)$ is a constant which only depends on $N$, $d$, and $r$.
\end{thm}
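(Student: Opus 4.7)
The plan is to prove the bound by induction on the dimension $r$ of $Y$, using a Jacobian stratification. Throughout I may assume that $Y$ is defined by finitely many polynomials $f_1,\dots,f_s\in\Zl[x_1,\dots,x_N]$ of degree at most $d$, with $s$ bounded in terms of $N$ and $d$ alone (since only monomials of degree $\leq d$ can appear).

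\textbf{Base case $r=0$.} If $\dim Y=0$, then $Y(\overline{\Ql})$ is a finite set, and by an effective form of B\'ezout's theorem its cardinality is bounded by $d^N$. Consequently $|Y_m|\leq d^N$ for all $m\geq 1$, so one may take $c(N,d,0)=d^N$, a quantity that is in particular independent of $\ell$.

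\textbf{Inductive step.} For $r\geq 1$, I would decompose $Y=Y^{\mathrm{sm}}\sqcup Y^{\mathrm{sing}}$, where $Y^{\mathrm{sm}}$ consists of those $\Zl$-points at which the Jacobian $J=(\partial f_i/\partial x_j)$ has full rank $N-r$, and $Y^{\mathrm{sing}}$ is the complementary closed subset. On $Y^{\mathrm{sm}}$, at each point some $(N-r)\times(N-r)$ minor $\Delta$ of $J$ is non-zero, and I would stratify $Y^{\mathrm{sm}}$ by (i) which minor is used and (ii) the $\ell$-adic valuation of $\Delta$. Over the locus where $v_\ell(\Delta)\leq k$ for a fixed integer $k$, the $p$-adic implicit function theorem (a quantitative form of Hensel's lemma) expresses $N-r$ of the coordinates as $p$-adic analytic functions of the remaining $r$ on a ball of controlled radius; modulo $\ell^m$, such a chart contributes at most $C_k\cdot\ell^{mr}$ image points, where $C_k$ depends on $k$ but not on $\ell$. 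The set $Y^{\mathrm{sing}}$ is cut out inside $\Zl^N$ by $f_1,\dots,f_s$ together with the vanishing of all $(N-r)\times(N-r)$ minors of $J$; these minors have degree bounded by a function $d'=d'(N,d,r)$, and $\dim Y^{\mathrm{sing}}\leq r-1$. The inductive hypothesis then yields $|(Y^{\mathrm{sing}})_m|\leq c(N,d',r-1)\,\ell^{m(r-1)}\leq c(N,d',r-1)\,\ell^{mr}$.

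\textbf{Main obstacle.} The delicate point is uniformity in $\ell$: the constant $c(N,d,r)$ must not depend on the prime. This forces one to bound \emph{a priori} the number of valuation strata $k$ that contribute non-trivially on the smooth locus. Concretely, one must exhibit an integer $k_0=k_0(N,d,r)$ such that any stratum with $v_\ell(\Delta)>k_0$ is either empty or has its mod-$\ell^m$ image already absorbed into the singular-locus contribution; this is the technical heart of Oesterl\'e's original argument, and it ultimately rests on the fact that the ideal generated by the $f_i$ and the $(N-r)\times(N-r)$ minors of $J$ has a Noetherian structure controlled by $(N,d,r)$ alone. Combining the smooth-chart contributions (finitely many strata, each of size $O(\ell^{mr})$) with the inductive bound on $Y^{\mathrm{sing}}$ then yields the desired inequality $|Y_m|\leq c(N,d,r)\,\ell^{mr}$.
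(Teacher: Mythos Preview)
The paper does not give its own proof of this statement: it is quoted verbatim as \cite[Th\'eor\`eme 1]{Oesterle} and used as a black box. So there is no in-paper argument to compare against; the relevant question is whether your sketch stands on its own.

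It does not. You yourself flag the crux under ``Main obstacle'' and then do not resolve it. The assertion that there exists $k_0=k_0(N,d,r)$, independent of $\ell$, such that every stratum with $v_\ell(\Delta)>k_0$ contributes nothing beyond the singular locus is precisely the content of the theorem, and invoking ``a Noetherian structure controlled by $(N,d,r)$ alone'' is not an argument: Noetherianity of $\Zl[x_1,\dots,x_N]$ gives finiteness for a \emph{fixed} $\ell$, not uniformity across primes. Without this bound, your smooth-locus contribution is $\sum_{k\geq 0} C_k\,\ell^{mr}$ with infinitely many non-zero $C_k$, and the sum is not visibly finite, let alone bounded by a constant independent of $\ell$. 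There is also a subtler issue with your implicit-function-theorem step: on a chart where $v_\ell(\Delta)=k$, Hensel lifting only determines the dependent coordinates modulo $\ell^{m-2k}$ (roughly), so the naive count of images mod $\ell^m$ on such a chart is of order $\ell^{mr+k(N-r)}$, not $\ell^{mr}$; absorbing this loss again requires the very uniform bound on $k$ that you have not supplied.

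For context, Oesterl\'e's actual proof proceeds rather differently: he works with $p$-adic analytic sets, uses Weierstrass preparation to reduce to distinguished polynomials in one variable over a ring of power series in the remaining variables, and controls the number of zeros fibre by fibre in a way that makes the uniformity in $\ell$ automatic from the degree bound. If you want to complete your Jacobian-stratification approach, the missing ingredient is an effective bound (in terms of $N,d,r$ only) on the $\ell$-adic valuation of the Jacobian minor at smooth $\Zl$-points away from a subscheme of lower dimension; this is essentially a uniform \L ojasiewicz-type inequality, and establishing it is comparable in difficulty to the theorem itself.
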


In order to prove Lemma \ref{lemme fusion}, we need the above result and the following lemma: 

\begin{lemma}\label{lisse-l-petit}
Let $H$ be a finite subgroup of $\Alinf$ and denote $Y=G(H)$. Then, for every integer $m \geq 1$ there exist constants $c_1$ and $c_2$, which depend on $\ell$, such that
$$
c_1 \ell^{m \dim Y} \leq |Y_m| \leq c_2 \ell^{m \dim Y}.
$$
\end{lemma}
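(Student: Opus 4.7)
The plan is to split the argument into an upper bound coming straight from Oesterl\'e's Theorem \ref{oesterle} and a lower bound coming from the group structure of $Y$. For the upper bound I observe that $Y = G(H)$ sits inside $G(\Zl) \subset \GL_{2g}(\Zl) \subset \Zl^{(2g)^2}$ and is cut out by the equations defining $G$ over $\Zl$, together with finitely many congruence conditions modulo $\ell^{m_1},\dots,\ell^{m_t}$ obtained from the defining equations of the $G_{W_i}$ reduced modulo $\ell^{m_i}$. All these equations are of degree bounded in terms of $A$ only, so applying Theorem \ref{oesterle} with $r := \dim Y$ yields directly $|Y_m| \le c_2\, \ell^{m\dim Y}$.

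For the lower bound I would exploit the group structure of $Y$. Since $H$ is finite, $G(H)$ is the pointwise stabiliser of $H$ inside $G(\Zl)$, hence an open subgroup of finite index; in particular it is a compact $\ell$-adic Lie group of the same dimension $r = \dim Y$. I would then invoke the standard structure theory of compact $\ell$-adic Lie groups to produce an integer $k = k(\ell,Y)\ge 1$ such that the congruence subgroup
$$
U := Y \cap \bigl(I + \ell^{k} M_{2g}(\Zl)\bigr)
$$
is ``standard analytic'' in the sense that the $\ell$-adic logarithm induces a homeomorphism $U \xrightarrow{\sim} \ell^{k}\cdot \mathrm{Lie}(Y) \cong \ell^{k}\,\Zl^{r}$, and $U$ has finite index in $Y$.

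It then suffices to observe that for every $m \ge k$ the reduction of $U$ modulo $\ell^m$ has cardinality exactly $\ell^{(m-k)r}$, so that
$$
|Y_m| \;\ge\; |U \bmod \ell^m| \;\ge\; \ell^{-kr}\cdot \ell^{mr}.
$$
Setting $c_1 := \ell^{-kr}$ (which depends on $\ell$ through $k$, as permitted by the statement) gives the desired inequality for every $m \ge k$, and absorbing the finitely many values $m < k$ into a smaller $c_1$ finishes the argument.

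The main obstacle in this outline is the appeal to $\ell$-adic Lie theory to produce the standard subgroup $U$. This is classical (Bourbaki, \emph{Groupes et alg\`ebres de Lie} III, or Lazard's theory of $p$-adic analytic groups), but one does need to check that $Y$ is an $\ell$-adic analytic submanifold of $\GL_{2g}(\Zl)$ of the expected dimension near the identity, which rests on smoothness of a neighbourhood of the identity in $G$. A more hands-on alternative would be to pick a smooth $\Zl$-point of $Y$, use the $\ell$-adic implicit function theorem to parametrise a neighbourhood of it by a ball in $\Zl^{r}$, and then count the image of this parametrisation in $(\Z/\ell^m\Z)^{(2g)^2}$; this gives the same estimate but routes the dependence on $\ell$ through the smoothness constant rather than through the Lie-algebra filtration.
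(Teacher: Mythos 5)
Your proposal proves the right statement, and the upper bound is the same as the paper's (Oesterl\'e, with the same small imprecision: the congruence conditions cutting out $G(H)$ inside $G(\Zl)$ are not polynomial equations in the sense of Theorem \ref{oesterle}; the clean fix is to use $|Y_m|\le |G_m|$ and apply Oesterl\'e to $G$ itself, which is harmless here since $Y$ is open in $G(\Zl)$ and thus has the same analytic dimension as $G$). For the lower bound, however, you take a genuinely different route. The paper never invokes $\ell$-adic Lie theory: it quotes a weak form of Serre's bound $|G_m|\gg_{\ell}\ell^{m\dim G}$ from \cite{SerreApplications}, uses Chevalley's theorem to realize the quotient of $G$ by the stabilizer of $W=W_1\otimes\Ql$ as a quasi-projective orbit $Z^{\circ}\subset\mathbb{P}(U)$, bounds $|Z^{\circ}_m|\ll \ell^{m\dim Z^{\circ}}$ by Oesterl\'e after covering $Z$ by affine opens, and concludes from the fibration inequality $|G_m|\le |Y_m|\cdot|Z^{\circ}_m|$. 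You instead work inside $Y$ itself: $Y=G(H)$ is open of finite index in $G(\Zl)$, hence a compact $\ell$-adic analytic group, and counting the image mod $\ell^m$ of a standard congruence subgroup via the logarithm gives $|Y_m|\ge \ell^{-kr}\ell^{mr}$. Both arguments are valid. Yours is more elementary on the algebraic-geometry side (no Chevalley, no quotient variety, no second application of Oesterl\'e) but imports Lazard/Bourbaki or the $\ell$-adic implicit function theorem, and it makes visible that the constant $c_1$ necessarily depends on $H$ as well as on $\ell$, since your level $k$ must be at least $n$ when $H\subset\Aln$; the paper's route through defining equations and the quotient keeps the constants expressed in terms of $N$, degrees and dimensions, which is closer in spirit to the uniformity it wants when feeding this lemma into Lemma \ref{lemme fusion}. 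Note finally that your opening observation already yields a shorter proof of the lower bound: since $Y$ is open of finite index $D$ in $G(\Zl)$, the cosets give $|Y_m|\ge |G_m|/D$, and Serre's lower bound for $|G_m|$ (which the paper quotes anyway) finishes the argument without any use of the logarithm.
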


\begin{proof}
We know that $Y=G(H)$ is defined by a finite number of polynomials of $N=(2g)^2$ variables with coefficients in $\Zl$. By Theorem \ref{oesterle} one can get an upper bound for $|Y_m|$. We know that for every integer $m\geq 1$ there exists a constant $c_2$, which depends on $N$, on the degree of polynomials which define $Y$ and on the dimension of $Y$ denoted $r$ such that:
\begin{equation}
|Y_m|\leq c_2 \ell^{m\dim G(H)}.
\end{equation}
Notice that the constant $c_2$ does not depend on the prime number $\ell$.

In order to get a lower bound of $|Y_m|$ we prove the following inequalities for every integer $m\geq 0$:

\begin{equation}\label{inequalities}
c(G,\ell) \ell^{m \dim G} \leq |G_m|\leq |Y_m| \times |(G/Y)_m|\leq c(\deg(Y),G,N)\ell^{m\dim(G/Y)}\times|Y_m|. 
\end{equation}
Then, we have
\begin{equation}
c_1(g,r,\ell)\ell^{m\dim Y}\leq |Y_m|,
\end{equation}
where $c_1(g,r,\ell)=\frac{c(G,\ell)}{c(\deg(Y),G,N)}$.

In order to obtain the first inequality in \eqref{inequalities} we use a weaker version of \cite[Theorem 9]{SerreApplications} which can be stated as follows:
\begin{equation}
\text{for all }\, m\geq 0,\qquad c(G,\ell) \ell^{m \dim G} \leq |G_m|
\end{equation}
where the constant $c(G,\ell)$ only depends on $G$ and on the prime number $\ell$.
Recall that for every finite subgroup $H\subset \Alinf$ there exists a filtration of submodules $W_t\subset ... \subset W_1$ of $\Tl(A)$ such that 
$$
G(H)=\{M\in \MT(\Zl), \, M\in G_{W_i} \mod \ell^{m_i},  \, \text{for }\, 1\leq i \leq t \},
$$
where the $G_{W_i}$ are the stabilizers of the submodules $W_i$.
Let $W=W_1 \otimes \Ql$ denote the $\Ql$-vector space of $\Vl(A)$ of dimension $r$. From Chevalley's Theorem (see \cite[Th\'eor\`eme p. 80]{Hum}) we know that there exists a vector space $U$ and a one-dimensional space $L\subset U$ such that there exists a rational representation $\rho: G\to \GL(U)$ such that $G(W)=Stab_L$.
As a corollary of Chevalley's Theorem we know that there exists an action of $G$ on the projective space $\mathbb{P}(U)$
$$
G\times \mathbb{P}(U) \to \mathbb{P}(U),
$$
such that the orbit of $[L]\in \mathbb{P}(U)$ can be identified with $G/G(W)$ because $G(W)=Stab_L$. 
Therefore the quotient $G/G(W)$ is a locally closed subvariety of $\mathbb{P}(U)$. Let us introduce the following map
\begin{equation}
\begin{aligned}
\psi:G & \to Z \subset \mathbb{P}(U)\\
g &\mapsto g\cdot[L]
\end{aligned}
\end{equation}
where $Z$ it the algebraic closure of $Z^{\circ}:=\psi(G)$. Hence we have the following isomorphism:
\begin{equation}\label{isomorphisme}
G/G(W)\simeq Z^{\circ}\subset Z.
\end{equation}
Let us remark that the variety $Z^{\circ}$ is actually a quasi-projective variety. Nevertheless, in order to use Oesterl\'e's Theorem one needs to work with affine varieties. Therefore, we can cover the variety $Z$ with some affine open subsets and then apply Oesterl\'e's Theorem to these open subsets.
Let $\ell$ be a prime number. Then, for every integer $m\geq 1$ the map $\psi$ induces a map
\begin{equation}
\psi_{\ell^m}:G_m \to Z^{\circ}_m.
\end{equation}
It follows from the isomorphism \eqref{isomorphisme} that
\begin{equation}
(G/G(W))_m \simeq Z^{\circ}_m.
\end{equation}

In order to prove the second inequality in \eqref{inequalities} we consider the following equality for every integer $m\geq 1$:

\begin{equation}\label{egalite}
|G_m|=\sum_{x\in Z^{\circ}_m} |\psi_{\ell^m}^{-1}\{x\}\cap G_m|.
\end{equation}
We know that $\psi^{-1}\{x\}\cap G_m$ is a homogeneous space under $G(W)$ and therefore $\psi_{\ell^m}^{-1}\{x\}\cap G_m$ is also a homogeneous space under $Y_m$. 
Consider the following surjection when $\psi_{\ell^m}^{-1}\{x\}\neq \emptyset$, 
\begin{equation}
\begin{aligned}
Y_m &\twoheadrightarrow \psi_{\ell^m}^{-1}\{x\}\cap G_m\\
g & \mapsto g\cdot y_0,
\end{aligned}
\end{equation}
where $y_0\in \psi_{\ell^m}^{-1}\{x\}\cap G_m$.
Hence
\begin{equation}\label{inegalite}
|\psi_{\ell^m}^{-1}\{x\}\cap G_m|\leq |Y_m|.
\end{equation}
From \eqref{egalite} and \eqref{inegalite} one deduces the inequality 
\begin{equation}
|G_m|\leq |Z^{\circ}_m|  \times |Y_m|,
\end{equation}
which is equivalent to the second inequality of \eqref{inequalities}. 

In order to obtain the last inequality of \eqref{inequalities} one needs to prove that, for every integer $m\geq 1$, we have 
\begin{equation}
|Z^{\circ}_m|\leq c(Y,G,N)\ell^{m \dim Z^{\circ}}.
\end{equation}
In order to do this we use Theorem \ref{oesterle} and the fact that $G/Y\simeq Z^{\circ} \subset \mathbb{P}(U)$ is entirely determined by certain equations of bounded degree.
Therefore, with the notations $Y=G(H)$ and $Y_m=im(Y\to \GL_{2g}(\Z/\ell^m \Z))$ we proved that, for every submodule $H\subset \Alinf$ we have,
$$
\text{for all }\, m\geq0, \qquad c_1(g,r,\ell)\ell^{m\dim Y}\leq |Y_m|\leq c_2(g,r,Y)\ell^{m\dim Y}.
$$
\end{proof}

Now we can proceed with the proof of Lemma \ref{lemme fusion}.
\begin{proof}
When the prime number $\ell$ is large enough, say $\ell\geq \ell_0(A,K)$, Lombardo proved that the stabilizer $G(H)$ is smooth over $\Zl$ (see \cite[Lemma 2.13]{LomHl}). Therefore one can use \cite[Lemma 2.4]{HR10} in order to obtain the following inequalities:
\begin{equation}
c_1\ell^{\sum_{i=1}^{t} d_i \big(m_i-m_{i-1}\big)} \leq (G(\Zl):G(H))\leq c_2 \ell^{\sum_{i=1}^{t} d_i \big(m_i-m_{i-1}\big)},
\end{equation}
where $c_1$ and $c_2$ are two constants independent of $m_i$ and $\ell$ for $\ell\geq \ell_0(A,K)$.

The main problem occurs when $\ell<\ell_0(A,K)$. In order to obtain our result we are going to introduce some notations from \cite{SerreApplications}.
Let $Y=G(H)$, then 
\begin{equation}
Y\subset G(\Zl)\subset \GL_{2g}(\Zl) \quad \text{and} \quad Y\subset \Zl^N \quad \text{where} \quad N=4g^2.
\end{equation}
Let $X_m:= (\Z/\ell^m \Z)^N$, we can define the following finite subgroup:
$$
Y_m:=im(\red: Y \to X_m).
$$
By Lemma \ref{lisse-l-petit} we know that, for $\ell< \ell_0(A,K)$, we have that for every integer $m\geq 1$ there exist two constants $c_1$ and $c_2$, which depend on $\ell$, such that:
$$
c_1 \ell^{m \dim Y} \leq |Y_m| \leq c_2 \ell^{m \dim Y}.
$$
Therefore there exists two constants $C_1$ and $C_2$, independent of the integers $m_i$, which possibly depend on $\ell$ for $\ell< \ell_0(A,K)$, such that
\begin{equation}
C_1\ell^{\sum_{i=1}^{t} d_i \big(m_i-m_{i-1}\big)} \leq (G(\Zl):G(H))\leq C_2 \ell^{\sum_{i=1}^{t} d_i \big(m_i-m_{i-1}\big)}.
\end{equation}

Finally our result is independent of the prime number $\ell$ and we obtain the following equality, up to some multiplicatives constants depending on $A$:
\begin{equation}
(G(\Zl):G(H))\gg\ll_{A} \ell^{\sum_{i=1}^{t} d_i \big(m_i-m_{i-1}\big)}.
\end{equation}
\end{proof}

\begin{remark}
As it has been pointed up by Serre \cite[Remarque p. 346]{SerreApplications}, one can use the methods, developed by Oesterl\'e and Robba (see Theorem \ref{oesterle}), in order to obtain a better estimation of the index $(G(\Zl):G(H))$ than the one given by \cite[Th\'eor\`eme 8]{SerreApplications}. 
\end{remark}


\section{Galois representations}\label{vraie section 3}

\subsection{Weil pairing}\label{BGK}

Let $A^{\vee}$ be the dual variety of $A$, then there exists a bilinear non-degenerate form over $\Tl(A)\times \Tl(A^{\vee})$ which is Galois equivariant, called the Weil pairing:
$$
<\cdot,\cdot>: \Tl(A)\times \Tl(A^{\vee}) \to \Zl(1)=\varprojlim \mu_{\ell^n},
$$
we refer the reader to \cite[Chap IV]{Mum} for further information.
Let $\phi:A\to A^{\vee}$ be a polarization of $A$. It induces a non-degenerate, alternating, bilinear pairing:
$$
\phi_{\ell^{\infty}}: \Tl(A)\times \Tl(A) \buildrel{id\times \phi}\over{\to} \Tl(A)\times \Tl(A^{\vee}) \buildrel{<\cdot,\cdot>}\over{\to} \Zl(1)=\varprojlim \mu_{\ell^n}.
$$

Recall that $D=\Endzero(A)$ is a quaternion algebra defined over the totally real field $E$. We define the following set:
\begin{defn}\label{ensemble S}
Let $\mathcal{S}$ be the finite set of prime numbers $\ell$ such that $\ell$ is ramified in $\mathcal{O}_E$, or $\ell$ divides the degree of the fixed polarization $\phi$ of $A$, or in the case of type II or III, the quaternion algebra $D$ does not decompose at some $\lambda| \ell$. 
\end{defn}

Let us recall that $\Ol=\prod_{\lambda|\ell} \Olam$, hence, 
$$\Tl(A)=\prod_{\lambda|\ell} \Tl(A)\otimes_{\Zl}\Olam,$$
where by definition $\Tlamcal(A):=\Tl(A)\otimes_{\Zl}\Olam$. 
By \cite[Theorem 3.23]{BGK} we know that, in the case where $A$ is an abelian variety of type II or III and $\ell \notin \mathcal{S}$, $\Tlamcal(A)=\Tlam(A)\oplus \Tlam(A)$ where $\Tlam(A)$ is a free $\Olam$-module of rank $2h$. Moreover, for every $\lambda|\ell$ there exists a symmetric, non-degenerate pairing 
$$\phi_{\lambda^{\infty}}: \Tlam(A) \times \Tlam(A) \to \Olam$$ 
compatible with the $G_K$-action which is induced by $\phi_{\ell^{\infty}}$. 
Therefore the $\Ql$-vector space $\Vl$ decomposes: 
$$\Vl=\prod_{\lambda|\ell} \Vlam \oplus \Vlam \quad \text{and} \quad \Hg\otimes \Ql\subset \mathrm{Sp}(D,\phi)\otimes \Ql=\prod_{\lambda|\ell} \mathrm{SO}(\Vlam, \phi_{\lambda}).$$

\subsection{Property $\mu$}\label{propriete mu}

Let us introduce the property called ``propri\'et\'e $\mu$'' (see \cite[Definition 6.3]{HR10}). We say that the abelian variety $A$ satisfies the property $\mu$ if for every prime number $\ell$ and every subgroup $H\subset \Alinf$ there exists an integer $m=m(H)$, such that, up to some finite index bounded independently of $\ell$ (notation $\eq$), one has:
$$
K(H)\cap \Kmuinf)\eq K(\mu_{\ell^m}).
$$
Let assume that the abelian variety satisfies the Mumford-Tate Conjecture. 
Then in \cref{diagramme2} we have the following equalities up to some finite indexes bounded independently of $\ell$:
$$
\Gal(K(\Alinf)/K)\eq \MT(\Zl),
$$
$$
\Gal(K(\Alinf)/\Kmuinf)\eq \Hg(\Zl),
$$
$$
\Gal(\Kmuinf/K)\eq \Gm(\Zl).
$$
\begin{figure}[h!]
\begin{center}
\begin{tikzpicture}
\node (1) at (0,6) {$K(\Alinf)$};
\node (2) at (-3.5,4) {$K(H)$};
\node (3) at (3.5,4) {$\Kmuinf$};
\node (4) at (0,2) {$K(\mu_{\ell^m})$};
\node (5) at (0,0) {$K$};

\node (6) at (-4.9,1) {$\MT(\Zl)$};
\node (7) at (3.2,1.8) {$\G_m(\Zl)=\Zl^{\times}$};
\node (8) at (3,5.2) {$\Hg(\Zl)$};

\draw (1)--(2); \draw (1)--(3);
\draw (4)--(2); \draw (4)--(3);
\draw (5)--(2); \draw (5)--(3);
\draw (4)--(5);

\draw[dashed] (5) edge[bend left=22] (-3.5,1) ;
\draw[dashed] (-3.5,1) edge[bend left=45] (-3.5,5.3);
\draw[dashed] (-3.5,5.3) edge[bend left=20] (1) ;

\end{tikzpicture}
\end{center}
\caption{General case : $H\subset \Alinf$}\label{diagramme2}
\end{figure}\\
Let us introduce the following groups:
$$
G_0(H):=\{\sigma \in \MT(\Zl), \sigma_{|H}=id_{|H}\}=\Gal(K(\Alinf)/K(H)),
$$
$$
G(H):=G_0(H)\cap \Hg(\Zl).
$$
\begin{lemma}\label{lemma delta}
Let $H\subset \Alinf$ and let $\delta(H):=[K(\mu_{\ell^m}):K]$. Then we have:
$$
[K(H):K]=(\Hg(\Zl):G(H))\cdot \delta(H),
$$
up to some finite index bounded independently of the prime number $\ell$.
\end{lemma}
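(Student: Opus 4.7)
The plan is to factorise $[K(H):K]$ through the cyclotomic subfield $L:=K(\mu_{\ell^m})$ using Property $\mu$, and then identify each factor as the index of a $\Zl$-group inside $\Hg(\Zl)$ or $\Gm(\Zl)$. Throughout, I use that the Mumford--Tate Conjecture hypothesis, together with the open-image-type bounds available in the fully Lefschetz setting, guarantees that all three equalities
$\Gal(K(\Alinf)/K)\eq\MT(\Zl)$, $\Gal(K(\Alinf)/\Kmuinf)\eq\Hg(\Zl)$, and $\Gal(\Kmuinf/K)\eq\Zl^\times$ hold up to finite indices that are bounded independently of $\ell$.

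The first step is to invoke Property $\mu$: there exists $m=m(H)$ such that $K(H)\cap\Kmuinf\eq L=K(\mu_{\ell^m})$, with a multiplicative error bounded uniformly in $\ell$. The tower law then splits
$$[K(H):K]=[K(H):L]\cdot[L:K]=[K(H):L]\cdot\delta(H),$$
so it suffices to prove $[K(H):L]\eq(\Hg(\Zl):G(H))$.

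The second step is the compositum argument. Set $F:=K(H)\cdot\Kmuinf$, which lies inside $K(\Alinf)$ and sits above both $K(H)$ and $\Kmuinf$. Standard Galois theory of compositums gives an injective restriction map
$$\Gal(F/\Kmuinf)\hookrightarrow\Gal(K(H)/K(H)\cap\Kmuinf),$$
which is an isomorphism. Combining this with $K(H)\cap\Kmuinf\eq L$, one deduces $[F:\Kmuinf]\eq[K(H):L]$.

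The third step is to identify $[F:\Kmuinf]$ as an index inside the Hodge group. By construction
$$\Gal(K(\Alinf)/F)=\Gal(K(\Alinf)/K(H))\cap\Gal(K(\Alinf)/\Kmuinf)=G_0(H)\cap\Hg(\Zl)=G(H),$$
where the last equality comes from the very definition of $G(H)$, and the penultimate one uses the identification $\Gal(K(\Alinf)/\Kmuinf)\eq\Hg(\Zl)$ from the Mumford--Tate Conjecture. Hence
$$[F:\Kmuinf]=(\Gal(K(\Alinf)/\Kmuinf):\Gal(K(\Alinf)/F))\eq(\Hg(\Zl):G(H)),$$
and putting the three steps together delivers the claimed equality $[K(H):K]\eq(\Hg(\Zl):G(H))\cdot\delta(H)$.

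The only delicate point is bookkeeping the symbols $\eq$: at each step I multiply by a constant that is bounded independently of $\ell$, and I need to check that the total product is still a bounded constant. This boils down to (i) the uniform version of Property $\mu$ as formulated in the paper, (ii) the uniformity of the indices $[\MT(\Zl):\Gal(K(\Alinf)/K)]$ and $[\Hg(\Zl):\Gal(K(\Alinf)/\Kmuinf)]$ guaranteed by fully Lefschetz type together with the Mumford--Tate Conjecture, and (iii) the elementary fact that the image of the restriction $\Gal(F/\Kmuinf)\to\Gal(K(H)/L)$ has bounded cokernel precisely because $K(H)\cap\Kmuinf$ differs from $L$ by a bounded index. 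I expect this last bookkeeping to be the only subtle aspect; the algebraic backbone of the argument is just the standard compositum identity of Galois theory.
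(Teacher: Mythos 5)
Your argument is correct, and it proves the literal statement, but it is not the route the paper takes. You establish the factorization $[K(H):K]\asymp(\Hg(\mathbb{Z}_\ell):G(H))\cdot\delta(H)$ directly: property $\mu$ gives the tower through $K(\mu_{\ell^m})$, and the compositum $F=K(H)\cdot\Kmuinf$ inside $K(\Alinf)$ identifies the top factor with the index of $G(H)=G_0(H)\cap\Hg(\mathbb{Z}_\ell)$ in $\Hg(\mathbb{Z}_\ell)$; in group-theoretic form this is just the index identity $[\Gamma:U]=[\Gamma:UV]\cdot[V:U\cap V]$ with $\Gamma=\Gal(K(\Alinf)/K)\asymp\MT(\mathbb{Z}_\ell)$, $U=G_0(H)$ and $V=\Gal(K(\Alinf)/\Kmuinf)\asymp\Hg(\mathbb{Z}_\ell)$ normal. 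The paper's recorded proof instead runs through the multiplier character: it identifies $\delta(H)$ with the index $(\mathbb{Z}_\ell^{\times}:\mult(G_0(H)(\mathbb{Z}_\ell)))$, and its highlighted input is that $\mult$ restricted to the stabilizer is surjective whenever $H$ is contained in a maximal isotropic subgroup (a readaptation of \cite[Proposition 5.5]{HR}), whence $\delta(H)=1$ in that case. So the paper's proof leaves your factorization implicit (it is the exact sequence $1\to\Hg\to\MT\xrightarrow{\mult}\mathbb{G}_m\to1$ applied to $G_0(H)$) but supplies an evaluation of $\delta(H)$ that is what actually gets used later, in the case distinction $\delta=0$ versus $\delta=1$ in the proof of Theorem \ref{theo1 section 4}; your proof is a cleaner and more self-contained proof of the lemma as stated, but anyone replacing the paper's proof by yours would still need the multiplier-surjectivity fact downstream. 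One small repair in your second step: you invoke the compositum isomorphism $\Gal(F/\Kmuinf)\cong\Gal(K(H)/K(H)\cap\Kmuinf)$, but $K(H)$ is not known to be Galois over $K$ (nor over the intersection), so neither side need be a Galois group and in general $[MN:N]$ can be strictly smaller than $[M:M\cap N]$. The degree equality you want does hold here, but the reason is that $\Kmuinf/K$ is abelian, hence $V$ is normal in $\Gamma$, so $UV$ is a group and $[K(H):K(H)\cap\Kmuinf]=[UV:U]=[V:U\cap V]=[F:\Kmuinf]$; this is exactly the index computation you already carry out in your third step, so the fix is purely cosmetic.
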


\begin{proof}
Recall that $G(H)$ is the stabilizer of $H$ in $\MT(\Zl)$ and that the abelian variety $A$ is simple of type III. The key point of the proof is that the morphism 
$$\mult: G(H)\to \Gm$$ 
is surjective for every maximal isotropic subgroup $H$ of $\Alinf$. One can see that this is a re adaptation of \cite[Proposition 5.5]{HR}. Then we will have $\mult(G(H))(\Zl)=\Zl^{\times}$ and therefore $\delta(H)=[K(\mu_{\ell^m}):K]=(\Zl^{\times}:\mult(G(H)(\Zl))=1$.
\end{proof}

Let us remark that in the case when $H\subset \Aln$ and the property $\mu$ holds, there exists an integer $m=m(H)$ such that, up to some finite index bounded independently of $\ell$, one has:
\begin{equation}\label{equation cas simple}
K(H)\cap K(\mu_{\ell^n})=K(H)\cap \Kmuinf \eq K(\mu_{\ell^m}).
\end{equation}


\section{Main results and proofs}\label{section 4}

In this section we are going to present a complete proof of the following theorem:

\begin{thm}\label{theo1 section 4}
Let $A$ be a simple abelian variety of type III, defined over a number field $K$ and dimension $g$. We have $e=[E:\Q]$, $d=2$ and $h$ the relative dimension. We suppose that $A$ is fully of Lefschetz type.
Then the invariant $\gamma(A)$ is equal to 
$$
\gamma(A)=\frac{2 d e h}{1 + eh(2h-1)}= \frac{2 \dim A}{\dim \rm \MT}.
$$ 
\end{thm}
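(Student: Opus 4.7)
The plan is to prove $\gamma(A) = 2\dim A/\dim\MT$ by matching lower and upper bounds; the equality with $\frac{2deh}{1+eh(2h-1)}$ then follows from $\Hg = \Res\mathrm{SO}_{2h}$, $\dim\Hg = eh(2h-1)$ and $\dim\MT = 1 + \dim\Hg$. Throughout, Lemma \ref{lemma delta} together with the property $\mu$ framework of \cref{propriete mu} reduces the computation of $[K(H):K]$ to the Hodge-group index $(\Hg(\Zl):G(H))$ up to the cyclotomic factor $\delta(H)$, which is itself a controlled power of $\ell$.

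For the lower bound $\gamma(A)\geq 2\dim A/\dim\MT$, I would test against the family $H_n := A[\ell^n]$ for a single fixed prime $\ell\notin\mathcal{S}$. Since $A$ satisfies the Mumford--Tate conjecture, $[K(A[\ell^n]):K]\asymp\ell^{n\dim\MT}$; combined with $|A[\ell^n]|=\ell^{2gn}$, this forces $\gamma(A)\geq 2g/\dim\MT$.

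The upper bound is the substantive direction. By a standard reduction it suffices to bound $|H|/[K(H):K]^x$ uniformly for finite $\ell$-power subgroups $H\subset A[\ell^\infty]$, across all primes $\ell$. Writing $H\cong\prod_i(\Z/\ell^{m_{t-i+1}})^{\alpha_i}$ with $m_1<\cdots<m_t$, the saturated filtration $W_t\subset\cdots\subset W_1$ from \eqref{eq:filtration} together with Lemma \ref{lemme fusion} gives $(\Hg(\Zl):G(H))\asymp\ell^{\sum_i d_i(m_i-m_{i-1})}$, where $d_i=\codim G_{W_i}$. To make each $d_i$ explicit I would invoke the decomposition $\Vl=\prod_{\lambda\mid\ell}(\Vlam\oplus\Vlam)$ from \cref{BGK} and apply Theorem \ref{th codimension} with $\varepsilon=-1$ on each orthogonal $\lambda$-factor, writing $d_i$ as a sum over $\lambda$ of terms of the form $\dim\mathrm{SO}(\Vlam,\phi_\lambda)-c_{\lambda,i}(c_{\lambda,i}-1)/2$. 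Combining this with the controlled contribution from $\delta(H)$, the theorem reduces to the combinatorial inequality
$$\sum_i m_i\beta_i\;\leq\;\frac{2\dim A}{\dim\MT}\sum_i d_i(m_i-m_{i-1}),$$
where $\beta_i=\alpha_{t-i+1}$ so that $|H|=\ell^{\sum_i m_i\beta_i}$.

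The main obstacle is this optimization step: showing that the ratio of the two sides is maximized precisely at the test case $H=A[\ell^m]$. The key input is the strict convexity of the orthogonal codimension function $c\mapsto c(c-1)/2$: refining the filtration (splitting a single step into several with distinct exponents) or shrinking any $W_i$ strictly below $\Tl(A)$ strictly decreases the ratio. My approach would be to first fix the filtration $W_\bullet$ and optimize linearly over the exponent sequence $(m_i)$, reducing to a single-step configuration, and then vary the filtration itself using the quadratic structure to force $W_1=\Tl(A)$. The resulting extremal value is $\frac{2deh}{1+eh(2h-1)}=2\dim A/\dim\MT$, which matches the lower bound and completes the proof.
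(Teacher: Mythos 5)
Your strategy coincides with the paper's own proof: reduce to $\ell$-primary subgroups via Serre's independence criterion, compute $[K(H):K]$ through Lemma \ref{lemma delta} and property $\mu$, estimate the Hodge-group index via the filtration, Lemma \ref{lemme fusion} and Theorem \ref{th codimension}, and finish with a combinatorial optimization; your lower bound via $H=A[\ell^n]$ is also the same as the extremal configuration in the paper. However, there is a genuine gap in your upper bound: the displayed combinatorial inequality has lost the cyclotomic contribution, and as written it is false. Take $H=A[\ell^m]$: then $\sum_i m_i\beta_i=2gm$, while the right-hand side equals $\frac{2\dim A}{\dim \MT}\,m\dim\Hg<2gm$ because $\dim\MT=\dim\Hg+1$. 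The codimension estimate alone only yields $[K(H):K]\gg\ell^{\sum_i d_i(m_i-m_{i-1})}$, hence at best $\gamma(A)\leq 2\dim A/\dim\Hg$, which does not meet your lower bound $2\dim A/\dim\MT$; no amount of convexity of $c\mapsto c(c-1)/2$ can produce the $+1$ in the denominator, since that $+1$ is not of group-theoretic origin inside $\Hg$.

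What is missing is a \emph{lower} bound on $\delta(H)$ and its bookkeeping along the filtration. One must use the orthogonal pairings $\phi_{\lambda^{\infty}}$ (Weil pairing) to show that as soon as $H$ contains points of order $\ell^{m}$ pairing nontrivially --- equivalently, some $W_i$ is not contained in a maximal isotropic subspace --- then $\mu_{\ell^{m}}\subset K(H)$, so $\delta(H)\gg\ell^{m_H}$. This is exactly why the paper introduces $m_H$, $h_H$ and the indicators $\delta_{\lambda,i}\in\{0,1\}$, which add $1$ to $d_i$ in the denominator for each non-isotropic step of the filtration. The optimization then splits into two regimes: steps contained in a maximal isotropic subspace have rank at most $h$ and contribute at most $\frac{2}{3h-1}$, while non-isotropic steps carry the extra $+1$ and contribute at most $\frac{2eh}{1+eh(2h-1)}$; the sharp constant comes from comparing these two maxima, not from simply "forcing $W_1=\Tl(A)$". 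Your sketch skips this dichotomy, so the reduction as stated cannot close the gap between your two bounds. (A minor further point: the finitely many primes $\ell\in\mathcal{S}$, where the $\lambda$-decomposition $\Tlamcal(A)=\Tlam(A)\oplus\Tlam(A)$ is unavailable, require a separate argument, as in the paper's appeal to Hindry--Ratazzi.)
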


Let us recall the definition of the invariant $\gamma(A)$:
\begin{equation}
\gamma(A)=\inf \{ x>0 \,| \, \forall L/K \, \mathrm{finite}, \,  |A(L)_{\mathrm{tors}}| \ll [L:K]^x \}.
\end{equation}
The main result needed to compute the invariant $\gamma(A)$ is the criterion of the independence of $\ell$-adic representations introduced by Serre in \cite{Serre}. Roughly  speaking, it says that there exists a finite extension $K'/K$, such that for every finite subgroup $H_{\mathrm{tors}}$ of $A(L)_{\mathrm{tors}}$ such that, if we write
$$
H_{\mathrm{tors}}=\prod_{\ell\in \mathcal{P}} H_{\ell},
$$
where $H_{\ell}$ is a subgroup of $\Alinf$. Then we have
$$
[K'(H_{\mathrm{tors}}):K']= \prod_{\ell\in \mathcal{P}} [K'(H_{\ell}):K'].
$$ 
Using this criterion, one can concentrate attention at finite subgroups $H$ of $\Alinf$. Therefore, we suppose that the number field $K$ is such that the $\ell$-adic representations are independent. We are going to work with the following definition of the invariant $\gamma(A)$:
\begin{equation}
\gamma(A)=\inf \{ x>0 \,| \, \forall H\subset \Alinf, \,  |H| \ll [K(H):K]^x \}.
\end{equation}
In order to compute the invariant $\gamma(A)$, we use the following equivalence: for every finite subgroup $H$ of $\Alinf$ we have
$$
|H| \ll [K(H):K]^{\gamma(A)} \quad \iff \quad \gamma(A) \geq \frac{\log_{\ell} |H|}{\log_{\ell} [K(H):K]}.
$$
Note that the determination of $\gamma(A)$ depends only on the order of the finite subgroup $H$ and the degree of the extension $K(H)$ over $K$.

After extending $K$ and replacing $A$ by an isogenous abelian variety, we may assume:
\begin{itemize}
\item the abelian variety $A/K$ is such that $\End_{\overline{K}}(A)=\End_K(A)$ which we denote by $\End(A)$,
\item we suppose that $\End(A)$ is a maximal order of $\End(A)\otimes \Q$,
\item the subgroup $H\subset \Alinf$ is stable under the action of $\End(A)$,
\item the number field $K$ is such that $\Gl$ is connected and the $\ell$-adic representations are independent in the sense of Serre (see \cite{Serre}).
\end{itemize}
Recall that the notation $\eq$ means that it is an equality up to some finite index bounded independently of the prime number $\ell$.

\begin{proof}

Recall that $A$ is a simple abelian variety defined over a number field $K$ of type III, dimension $g=2eh$. Let $G_K$ be the absolute Galois group and let $\ell$ be a prime number not in $\mathcal{S}$. Moreover we assume that the abelian variety $A$ is fully of Lefschetz type, more precisely one has the following equality
\begin{equation}
\Hg_{\Ql}=\prod_{\lambda|\ell}\mathrm{SO}_{2h}(\Olam)=\prod_{\lambda|\ell}\mathrm{Res}_{\Elam/\Ql}\mathrm{SO}_{2h,\Elam}.
\end{equation}
We can therefore deduce that 
$$
\dim \Hg_{\Ql}= \sum_{\lambda|\ell}[\Elam:\Ql]\frac{2h(2h-1)}{2}=e(2h^2-h).
$$

The proof of this theorem is going to be divided into two parts. 

\textbf{Part 1.} Assume that $H \subset \Al$, then one has the following decomposition for $\Al=\Tl(A)/\ell \Tl(A)$:
$$\Al=\prod_{\lambda|\ell} \Tlam[\ell]\oplus \Tlam[\ell],$$ where $\Tlam[\ell]=\Tlam(A)/\ell\Tlam(A)$.
Thus, for every subgroup $H$ of $\Al$ one has the following decomposition 
$$
H=\prod_{\lambda|\ell} \Hlam \oplus \Hlam,
$$
where $\Hlam$ is a subgroup of $\Tlam[\ell]$ of dimension $\rlam\leq 2h$.
Hence, one can deduce the order of $H$:
\begin{equation}
|H|=\ell^{2\cdot \sum_{\lambda|\ell} f(\lambda)\rlam},
\end{equation}
where $f(\lambda)$ is the residual degree in $\lambda$. 

Let us determine the degree of the extension $K(H)$ over $K$. In order to accomplish this, we use the property $\mu$ (see \cref{propriete mu}). We know that there exists an integer $m=m(H)$, such that, up to some finite index bounded independently of $\ell$, one has, as in \eqref{equation cas simple}:
$$
K(H)\cap \Kmuinf\eq K(\mu_{\ell^m}).
$$
In particular, because $H\subset A[\ell]$, one has
$$
K(H)\cap \Kmuinf\eq \left\{ 
\begin{array}{l}
K\\
K(\mu_{\ell}).
\end{array}
\right.
$$
Let us introduce the following groups as in section \ref{vraie section 3}:
$$
G_0(H):=\{\sigma \in \MT(\Fl), \sigma_{|H}=id_{|H}\}=\Gal(K(A[\ell])/K(H)),
$$
$$
G(H):=G_0(H)\cap \Hg(\Fl).
$$
By Lemma \ref{lemma delta} we know that $\delta(H):=[K(\mu_{\ell^m}):K]$ and, up to some finite index bounded independently of $\ell$, we have the following equality, 
\begin{equation}\label{degree}
[K(H):K]=[K(H):K(\mu_{\ell^m})][K(\mu_{\ell^m}):K]=(\Hg(\Fl):G(H))\cdot \delta(H).
\end{equation}
In order to compute $[K(H):K]$ one needs to determine $(\Hg(\Fl):G(H))$. We use Theorem \ref{th codimension} and Lemma \ref{lemme fusion} to achieve this.
Let us recall that for every subgroup $H\subset \Al$ one has the following decomposition: $$H=\prod_{\lambda|\ell}\Hlam \oplus \Hlam.$$ 
Consequently, one can apply the property $\mu$ to each subgroup $\Hlam \subset \Tlam[\ell]$ of dimension $\rlam$. Thus, there exists an integer $m_{\lambda}$ such that 
$$
K(\Hlam)\cap \Kmuinf\eq K(\mu_{\ell^{m_{\lambda}}}).
$$
We will use the notation $m:=\max_{\lambda|\ell}m_{\lambda}$. We know that, up to some finite index bounded independently of the prime number $\ell$:
$$
\Gal(K(\Al)/K(\mu_{\ell}))\eq \prod_{\lambda|\ell} \Gal(K(\Tlam[\ell])/K(\mu_{\ell})).
$$
Thus one has:
$$
(\Hg(\Fl):G(H))=\prod_{\lambda|\ell}(\Hg(\Flam):G(\Hlam)).
$$
In order to compute $(\Hg(\Fl):G(H))$ one needs to compute $(\Hg(\Flam):G(\Hlam))$ for every $\lambda|\ell$. Using Lemma \ref{lemme fusion} one obtains:
$$
(\Hg(\Flam):G(\Hlam))\gg\ll (\#\Flam)^{\codim G(\Hlam)},
$$
therefore
$$
(\Hg(\Fl):G(H))\gg\ll \prod_{\lambda| \ell} (\sharp \Flam)^{\codim \, G(\Hlam)}.
$$
Let $\delta$ be an integer such that $\delta(H)=\ell^{\delta}$. Then the equation \eqref{degree} becomes
\begin{equation}
[K(H):K]\gg\ll \ell^{\sum_{\lambda|\ell} f(\lambda) \codim \, G(\Hlam) + \delta},
\end{equation}
where, by Theorem \ref{th codimension}, one has $$\codim\, G(\Hlam)=\frac{(4h-1)\rlam - \rlam^2}{2}.$$

We know that, for every subgroup $H\subset \Alinf$, one has
$$
\gamma(A) \geq \frac{\log_{\ell} |H|}{\log_{\ell} [K(H):K]}.
$$
Hence
$$
\gamma(A)= \max_{\rlam} \psi(\underline{r}),
$$
where $\underline{r}=(\rlam)_{\lambda | \ell}$ and the function $\psi$ is defined as follows:

$$\psi(\underline{r})=\frac{2\sum_{\lambda | \ell} f(\lambda) \rlam}{\delta+ \sum_{\lambda | \ell} f(\lambda) \codim G_{\Hlam} }.$$

The idea now is to study the maximum of this function when the integers $\rlam$ vary. We are going to separate the investigation of the function $\psi$ into two parts: 
\begin{enumerate}
\item First of all, we suppose that $\Hlam$ is contained in a maximal isotropic space, hence $0 \leq \rlam\leq h$ for every $\lambda | \ell$ and $\delta(\Hlam)=1$ and so $\delta=0$. 
\item Secondly we suppose that $\Hlam$ is not contained in a maximal isotropic space then $\delta(\Hlam)=\ell$ and therefore $\delta=1$. 
\end{enumerate}
Then, one can obtain the maximum of the function $\psi$ when one compares the maximum obtained in each case. We are going to study the following function:
$$\psi(\underline{r})=\frac{2\sum_{\lambda | \ell} f(\lambda)\rlam}{\delta+ \sum_{\lambda | \ell} f(\lambda) \frac{(4h-1)\rlam-\rlam^2}{2}}.$$

In the case, when $\delta=0$ we have:
$$\psi(\underline{r})=\frac{4\sum_{\lambda | \ell} f(\lambda)\rlam}{\sum_{\lambda | \ell} f(\lambda)(-\rlam^2 + \rlam(4h-1))}.$$
We observe that $\psi$ is an increasing function and its maximum is reached when $\rlam=h$, for all $\lambda|\ell$. Then we have: 
$$
\max_{\rlam \in [0,h]} \psi (\underline{r})= \frac{4}{3h-1},
$$
where the maximum is over all $\underline{r}$ with $\rlam\in[0,h]$, for all $\lambda|\ell$.

In the case, when $\delta=1$ we have:
$$\psi(\underline{r})=\frac{4\sum_{\lambda | \ell} f(\lambda)\rlam}{2 + \sum_{\lambda | \ell} f(\lambda)(\rlam(4h-1)-\rlam^2) }.$$
Again, $\psi$ is an increasing function, however in this case its maximum is reached when $\rlam=2h$, for all $\lambda|\ell$. Then we have: 
$$
\max_{\rlam \in [0,2h]} \psi(\underline{r})= \frac{4eh}{1+e(2h^2 - h)}.
$$
where the maximum is over all $\underline{r}$ with $\rlam\in[0,2h]$, for all $\lambda|\ell$.

We can therefore conclude that the maximum of the function $\psi$ is
$$
\max_{\rlam\in [0,2h]} \psi(\underline{r}) = \max \bigg(  \frac{4}{3h-1},\frac{4eh}{1+e(2h^2 - h)}\bigg)=\frac{4eh}{1+e(2h^2 - h)}.
$$
Since $d=2$ one has
$$
\gamma(A)=\frac{2deh}{1+e(2h^2 - h)}=\frac{2\dim A}{1+ \dim \rm Res_{E/\Q} \rm{SO}_{2h}}=\frac{2\dim A}{\dim \MT},
$$
as expected.

\textbf{Part 2.} Assume now that  $H\subset \Alinf$. This means that there exists $n\in \N^{\ast}$ such that $H\subset \Aln$. As in the previous case, $H=\prod_{\lambda|\ell} \Hlam \oplus \Hlam$ where $\Hlam\subset \Tlam[\ell^n]$. One has the following decomposition as in \eqref{decompositionH}:
\begin{equation}
\Hlam = \prod_{i=1}^{t_{\lambda}} (\Z/\ell^{m_{\lambda}^i}\Z)^{\alpha_{\lambda,i} f(\lambda)}\simeq \prod_{i=1}^{\tlam} (\Olam/\ell^{m_{\lambda}^i}\Olam)^{\alpha_{\lambda,i}},
\end{equation}
where $1\leq \tlam \leq 2h$, $\alpha_{\lambda,i}$ are integers and $m_{\lambda}^{t_{\lambda}}<...<m_{\lambda}^1$ is a strictly decreasing sequence of integers. 
Therefore one can deduce that the order of $\Hlam$ is $|\Hlam|=\ell^{f(\lambda)\sum_{i=1}^{\tlam} \alpha_{\lambda,i}m_{\lambda}^i}$ and that the order of the subgroup $H$ is
\begin{equation}
|H|=\ell^{\sum_{\lambda|\ell} 2 f(\lambda) \sum_{i=1}^{\tlam} \alpha_{\lambda,i} m_{\lambda}^i}.
\end{equation}

Let us determine the degree of the extension $K(H)$ over $K$. As before, by Lemma \ref{lemma delta}, we know that $\delta(H):=[K(\mu_{\ell^m}):K]$ and we have the following equality, up to some finite index bounded independently of $\ell$, 
\begin{equation}\label{degree bis}
[K(H):K]=(\Hg(\Zl):G(H))\cdot \delta(H).
\end{equation}
As before, one has
\begin{equation}\label{h dans le cas H dans Alinf}
(\Hg(\Zl):G(H))=\prod_{\lambda| \ell} (\Hg(\Olam):G(\Hlam)).
\end{equation}
Our goal is to estimate the value of
\begin{equation}
(\Hg(\Olam):G(\Hlam)),
\end{equation}
for every $\lambda|\ell$.
In order to do so, we use Lemma \ref{lemme fusion} and we study more deeply the structure of each stabilizer $G(\Hlam)$.
As in section \ref{section 3}, we introduce, for every $\lambda|\ell$ and every $1\leq i \leq t_{\lambda}$, a filtration $W_{\tlam}\subset ... \subset W_1$ of saturated submodules of $\Tlam$ associated to the subgroups $\Hlam$ (see \eqref{eq:filtration}).
We define 
$$
r_{i}:=\rg_{\Olam} \,W_{i} = \sum_{k=1}^{t_{\lambda} - (i-1)} \alpha_{\lambda,k}.
$$
Let $G_{W_{i}}$ be the stabilizer of $W_{i}$, and hence $G(\Hlam)$ can be described as follows:
$$
G(\Hlam)=\{M\in \MT(\Olam), \, M \in G_{W_{i}} \mod \, \ell^{m_{\lambda}^{t_{\lambda} - (i-1)}} \, \text{for} \, 1\leq i \leq t_{\lambda}\}.
$$
By Theorem \ref{th codimension} we know that the codimension $d_i$ of $G_{W_{i}}$ is: 
$$
d_{i}=\frac{r_{i}(4h -1 - r_{i})}{2}.
$$
By Lemma \ref{lemme fusion} we know that for every prime number $\ell$, we have the following equality, up to some constants
$$
(\Hg(\Olam):G(\Hlam)) \gg\ll \ell^{ f(\lambda) \sum_{i=1}^{t_{\lambda}} d_{i}(m_{\lambda}^{t_{\lambda}-(i-1)}-m_{\lambda}^{t_{\lambda}-(i-1)+1}) }.
$$
Consequently equality \eqref{h dans le cas H dans Alinf} becomes:
\begin{equation}\label{hodge dans le cas H dans Alinf}
(\Hg(\Zl):G(H))\gg\ll \ell^{\sum_{\lambda|\ell} f(\lambda) \sum_{i=1}^{t_{\lambda}} d_{i}(m_{\lambda}^{t_{\lambda}-(i-1)}-m_{\lambda}^{t_{\lambda}-(i-1)+1}) }.
\end{equation}
By convention we have put $m_{\lambda}^{t_{\lambda}+1}=0$.
This means that the equality \eqref{degree bis} becomes
$$
[K(H):K]\gg\ll \ell^{\sum_{\lambda|\ell} f(\lambda) \sum_{i=1}^{t_{\lambda}} d_{i}(m_{\lambda}^{t_{\lambda}-(i-1)}-m_{\lambda}^{t_{\lambda}-(i-1)+1}) }\cdot \delta(H).
$$

Next we will obtain a lower bound for $\delta(H)$. In order to achieve this, we introduce two integers, $m_H$ and $h_H$ defined as follows : 
\begin{itemize}
\item let $m_H$ denote the maximal integer $m_H \geq 1$ such that there exists $P,Q \, \in \, H$ of order $\ell^{m_H}$ and such that $\phi_{\ell}(\ell^{m_H -1}P, \ell^{m_H -1}Q)\in \mu_{\ell}$. If such a $m_H$ does not exists, then we put $m_H=0$;
\item let $h_H$ denote the minimal integer such that  in $1\leq h_H\leq t_{\lambda}$ and  $m^{h_H} \leq m_H$. When $m_H=0$ we put $h_H=t_{\lambda}+1$.
\end{itemize}
Recall the inclusions $G_{W_1} \subset ... \subset G_{W_{t_{\lambda}}}$. We can attach to each stabilizer the integer $\delta_{\lambda,i}$ for $1\leq i \leq t_{\lambda}$ and $\lambda|\ell$ which takes value in $\{0,1\}$ depending in the fact that $W_i$ is contained in a maximal isotropic space or not. Since $W_{t_{\lambda}} \subset ... \subset W_1$ we notice that from the moment when one of the $W_i$ is not contained in a maximal isotropic space, we will have $\delta_{\lambda,i}=...=\delta_{\lambda,1}=1$. This can be translated in terms of the integer $h_H$ as follows: 
$$
\delta_{\lambda,1}=...=\delta_{\lambda,t_{\lambda}+1-h_H}=1 \quad \text{and} \quad \delta_{\lambda,t_{\lambda}+1-(h_H -1)}=...=\delta_{\lambda,t_{\lambda}}=0.
$$
Actually the integers $m_H$ and $h_H$ have been introduced in order to have $\delta(H)\gg \ell^{m^{h_H}}$. Note that we can write $m^{h_H}$ in terms of the $\delta_{\lambda,i}$ in the following way:
$$
m^{h_H} = \sum_{i=1}^{t_{\lambda}} (m_{\lambda}^{t_{\lambda}-(i-1)} - m_{\lambda}^{t_{\lambda}-(i-1)+1}) \delta_{\lambda,i}.
$$
Thus, from Lemma \ref{lemma delta}, one can deduce the following inequality :
$$
[K(H):K]\gg \ell^{\sum_{\lambda|\ell} f(\lambda) \sum_{i=1}^{t_{\lambda}} d_{i}(m_{\lambda}^{t_{\lambda}-(i-1)}-m_{\lambda}^{t_{\lambda}-(i-1)+1}) } \ell^{m^{h_H}},
$$
which holds up to some finite index, and hence
$$
[K(H):K]\gg \ell^{\sum_{\lambda|\ell}\sum_{i=1}^{t_{\lambda}}  f(\lambda)(d_i+\delta_{\lambda,i}) (m_{\lambda}^{t_{\lambda}-(i-1)}- m_{\lambda}^{t_{\lambda}-(i-1)+1})}.
$$

We will use some combinatorial arguments, as in \cite[4.2]{HR10}. We have the following equivalence: 

$$
|H| = \ell^{\sum_{\lambda|\ell} \sum_{i=1}^{t_{\lambda}}  f(\lambda)  a_{\lambda,i} m_{\lambda}^i} \ll [K(H):K]^{\gamma(A)} 
\iff
$$
\vspace{0.001cm}
$$
\gamma(A) \geq \max \left\{ \frac{\sum_{\lambda|\ell} \sum_{i=1}^{t_{\lambda}}  f(\lambda)  a_{\lambda,i} m_{\lambda}^i}{\sum_{\lambda|\ell} \sum_{i=1}^{t_{\lambda}}  f(\lambda)(d_i+\delta_{\lambda,i}) (m_{\lambda}^{t_{\lambda}-(i-1)}- m_{\lambda}^{t_{\lambda}-(i-1)+1})} \right\}.
$$
Let us recall that $a_{\lambda,i}=d \alpha_{\lambda,i}$ where $d=2$.
After some modifications with the denominator we obtain
$$
\gamma(A) \geq \max \left\{ \frac{\sum_{\lambda|\ell} \sum_{i=1}^{t_{\lambda}}  f(\lambda)  a_{\lambda,i} m_{\lambda}^i}{\sum_{\lambda|\ell} \sum_{i=1}^{t_{\lambda}}  f(\lambda) m_{\lambda}^i (d_{t_{\lambda}+1-i} + \delta_{\lambda,t_{\lambda}+1-i} - d_{t_{\lambda}+2-i} - \delta_{\lambda,t_{\lambda}+2-i})} \right\}.
$$
where we take the maximum over $m_{\lambda}^{t_{\lambda}}\leq...\leq m_{\lambda}^1$ and we denote the right hand side of the inequality by $M$.
From \cite[Lemma 2.7]{HR10}, one has 
$$
M= \max_{1 \leq k \leq t_{\lambda}}\left\{ \frac{\sum_{\lambda|\ell}\sum_{i=1}^{k} f(\lambda)a_{\lambda,i} }{\sum_{\lambda|\ell}\sum_{i=1}^{k} f(\lambda)(d_{t_{\lambda}+1-i} + \delta_{\lambda,t_{\lambda}+1-i} - d_{t_{\lambda}+2-i} - \delta_{\lambda,t_{\lambda}+2-i})} \right\}.
$$
By convention, one puts $d_{t_{\lambda}+1}=0=\delta_{\lambda,t_{\lambda}+1}$. Let us remark that 
$$r_{t_{\lambda}+1-k}:=\rg_{\Olam} W_{t_{\lambda}+1-k} = \sum_{i=1}^{k} a_{\lambda,i}.$$ 
After some simplifications we obtain:
$$
M= \max_{1 \leq k \leq t_{\lambda}}\left\{ \frac{\sum_{\lambda|\ell}f(\lambda)r_{t_{\lambda}+1-k} }{\sum_{\lambda|\ell}f(\lambda)(d_{t_{\lambda}+1-k}+\delta_{\lambda,t_{\lambda}+1-k})} \right\}.
$$
We can assume, without any loss of generality, that $\delta(H)=~\delta(H_{\lambda'})$ for a fixed place $\lambda'$ over $\ell$. Then, for every $\lambda|\ell$ such that $\lambda \neq \lambda'$, one has $\delta_{\lambda,t_{\lambda}+1-k}=0$ for every $1\leq k\leq t_{\lambda}$ and so,

$$
M= \max_{1 \leq k \leq t_{\lambda}}\left\{ \frac{\sum_{\lambda|\ell}f(\lambda)r_{t_{\lambda}+1-k} }{\delta_{\lambda',t_{\lambda'}+1-k}+\sum_{\lambda|\ell}f(\lambda)d_{t_{\lambda}+1-k}} \right\}.
$$
The maximum will be taken according to the values of $\delta_{\lambda',t_{\lambda}+1-k}$. Two cases can occur:

\begin{itemize}
\item If $1 \leq k < h_H$ then $t_{\lambda'}+1-k \in \llbracket t_{\lambda'}+2-h_H, t_{\lambda'} \rrbracket$ and therefore $\delta_{\lambda',t_{\lambda'}+1-k}=0$.

\item If $h_H \leq k \leq t_{\lambda'}$ then $t_{\lambda'}+1-k \in \llbracket 1, t_{\lambda'}+1-h_{H} \rrbracket$ and therefore $\delta_{\lambda',t_{\lambda'}+1-k}=1$.
\end{itemize}
Then we see that this maximum is
$$
M=\max \left\{ \max_{1\leq k < h_H} \, \frac{\sum_{\lambda|\ell}f(\lambda)r_{t_{\lambda}+1-k} }{\sum_{\lambda|\ell}f(\lambda) d_{t_{\lambda}+1-k}} \, , \, \max_{h_H \leq k < t_{\lambda}} \, \frac{\sum_{\lambda|\ell}f(\lambda)r_{t_{\lambda}+1-k}}{1+\sum_{\lambda|\ell}f(\lambda)d_{t_{\lambda}+1-k}} \right\}.
$$
Let us recall that 
$$
d_{t_{\lambda}+1-k}= \frac{r_{t_{\lambda}+1-k}(4h-1-r_{t_{\lambda}+1-k})}{2}.
$$
We introduce the following functions:
$$f_1 (r_{t_{\lambda}+1-k})=\frac{\sum_{\lambda|\ell}f(\lambda)r_{t_{\lambda}+1-k} }{\sum_{\lambda|\ell}f(\lambda) \frac{r_{t_{\lambda}+1-k}(4h-1-r_{t_{\lambda}+1-k})}{2}} $$
and
$$f_2(r_{t_{\lambda}+1-k})=\frac{\sum_{\lambda|\ell}f(\lambda)r_{t_{\lambda}+1-k}}{1+\sum_{\lambda|\ell}f(\lambda)\frac{r_{t_{\lambda}+1-k}(4h-1-r_{t_{\lambda}+1-k})}{2}}. $$ 
Then we have:
$$
M=\max \left\{ \max_{1\leq k < h_H} \, f_1(r_{t_{\lambda}+1-k}) \, , \, \max_{h_H \leq k < t_{\lambda}} \, f_2(r_{t_{\lambda}+1-k}) \right\}.
$$
An easy study shows that the functions $f_1$ and $f_2$ are increasing in their domains of definition. Note that their maxima are obtained at $r_{t_{\lambda}+1-k}=h$ and $r_{t_{\lambda}+1-k}=2h$ respectively. When one of the $W_i$ is contained in a maximal isotropic space we remark that its rank is at most $h$. The maximal case for $f_2$ occurs when $W_1=\Tl(A)$. Then
$$
M=\max \left\{ \frac{2}{3h-1} \, , \, \frac{2eh}{eh(2h-1)+1} \right\}.
$$
Note that the case $H\subset \Alinf$ can be reduced to the case $H \subset \Al$. We conclude that:
$$
\gamma(A)=\frac{2(2eh)}{eh(2h-1)+1}=\frac{2\dim\,A}{\dim \, \Hg+1},
$$
as expected.

Finally, in order to conclude the proof, one needs to prove this theorem in the case where the prime number $\ell$ is in the finite set $\mathcal{S}$. In order to achieve this, one can follow a discussion of different cases as in \cite[Paragraph 8]{HR}.
\end{proof}

Finally, we generalize Conjecture \ref{conjecture HR} to the case where $A$ is isogenous to a product of simple abelian varieties of type I, II or III and fully of Lefschetz type. More precisely we present a new result where only the type IV, in the sense of Albert's classification, is excluded.\\

\begin{thm}\label{theo3bis}
Let $A$ be an abelian variety defined over a number field $K$ isogenous over $\overline{K}$ to $\prod_{i=i}^{d}A_i^{n_i}$ where the abelian varieties $A_i$ are pairwise non isogenous over $\overline{K}$ of dimension $g_i$ and $n_i\geq 1$. We suppose that the abelian varieties $A_i$ are simple, not of type IV and fully of Lefschetz type. 
For every non empty subset $I\subset \{1,...,d\}$ we denote $A_I:=\prod_{i\in I} A_i$. Moreover we define $e_i=[E_i:\Q]$ where $E_i=Z(\Endzero(A_i))$, $h_i= \dimrel A_i$  and
$$
d_i=\left\{
\begin{aligned}
&1 \text{ if }A_i \text{ is of type I,}\\
&2 \text{ if }A_i \text{ is of type II or III,}
\end{aligned}\right.
\qquad
\eta_i=\left\{
\begin{aligned}
&1 \text{ if }A_i \text{ is of type I or II,}\\
&-1 \text{ if }A_i \text{ is of type III.}
\end{aligned}\right.
$$
Then one has:
$$
\gamma(A)=\max_{I} \left\{ \frac{2\sum_{i\in I} n_i d_i e_i h_i}{1 + \sum_{i\in I} e_i (2 h_i^2 +\eta_i h_i)}\right\}=\max_{I}\left\{ \frac{2\sum_{i\in I} n_i\dim A_i}{1 + \dim \rm Hg(\prod_{i\in I} A_i)}\right\}.
$$
\end{thm}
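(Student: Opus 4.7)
The plan is to adapt the argument of Theorem \ref{theo1 section 4} to the product setting, combining the single-factor analyses for types~I and~II (from \cite{HR}) with the type~III case just proved, and gluing them through the property $\mu$ machinery. After enlarging $K$ I may invoke Serre's independence criterion for the $\ell$-adic representations, so the computation of $\gamma(A)$ reduces to estimating $\log_\ell |H|/\log_\ell [K(H):K]$ over finite $\ell$-primary subgroups $H\subset A[\ell^\infty]$ for each prime $\ell$. Since $A[\ell^\infty] = \bigoplus_{i=1}^{d} A_i^{n_i}[\ell^\infty]$, I write $H = \bigoplus_{i=1}^d H^{(i)}$ and let $I = I(H) := \{i : H^{(i)}\neq 0\}$. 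The maximum over $I$ in the theorem will arise from varying which factors the subgroup $H$ activates.

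For a fixed $I$, the first step is to establish that, up to isogeny,
$$
\mathrm{Hg}(A_I) = \prod_{i\in I} \mathrm{Hg}(A_i), \qquad \dim\,\mathrm{Hg}(A_I) = \sum_{i \in I} e_i(2h_i^2 + \eta_i h_i).
$$
This is the content of the products-of-Lefschetz-type results of Ichikawa \cite{Ich} and Lombardo \cite{LomHl} invoked in the introduction: they require each $A_i$ to be simple, fully of Lefschetz type, pairwise non-isogenous, and not of type~IV. Combined with the property $\mu$ of \cref{propriete mu}, the independence of the $\ell$-adic Galois representations on the factors yields, up to a bounded multiplicative constant,
$$
[K(H):K] \eq (\mathrm{Hg}(A_I)(\Zl) : G(H))\cdot \delta(H),
$$
where, modulo a controlled cyclotomic identification, $G(H) \eq \prod_{i \in I} G(H^{(i)})$. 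For each $i \in I$ and each $\lambda | \ell$, attach to the $\lambda$-primary part of $H^{(i)}$ the filtration of saturated submodules described in \cref{section 3}; Theorem \ref{th codimension} applied with the sign $\varepsilon = \eta_i$ computes the codimensions of the stabilizers, and Lemma \ref{lemme fusion} translates them into the index of the stabilizer of $H^{(i)}$ inside $\mathrm{Hg}(A_i)(\mathcal{O}_{E_{i,\lambda}})$.

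Assembling all local contributions and taking logarithms produces, for each fixed $I$, a rational function $\psi_I(\underline{r})$ of the filtration ranks $r_{\lambda,i}$ whose supremum controls $\gamma(A)$. Exactly as in the proof of Theorem \ref{theo1 section 4}, $\psi_I$ is increasing in every rank variable, and property $\mu$ adds the $+1$ in the denominator as soon as some $W_i$ escapes a maximal isotropic subspace. The combinatorial reduction of \cite[Lemma~2.7]{HR10} then reduces the multivariate maximization to the extremal choice $r_{\lambda,i} = 2h_i$ for all $i \in I$ and all $\lambda | \ell$, which produces
$$
\psi_I^{\max} = \frac{2\sum_{i\in I} n_i d_i e_i h_i}{1 + \sum_{i \in I} e_i(2h_i^2 + \eta_i h_i)}.
$$
Taking the maximum over $I$ gives the lower bound $\gamma(A) \geq \max_I \psi_I^{\max}$, and the matching upper bound follows by exhibiting, for each $I$, a family of subgroups $H$ with $I(H) = I$ realizing the extremal filtration ranks, as in \cite[Paragraph~8]{HR}.

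The principal obstacle is the product-decomposition step: verifying that the Mumford--Tate and Hodge groups of $A$ really split as direct products of the $\mathrm{Hg}(A_i)$ up to bounded finite index, so that the $\ell$-adic representations on the factors become independent. All subsequent codimension and index calculations rest on this decomposition, and it is precisely here that the fully-Lefschetz-type hypothesis for each $A_i$, combined with pairwise non-isogeny and the exclusion of type~IV (via \cite{Ich} and \cite{LomHl}), is used in an essential way. Once this is in hand, the remainder of the argument is a direct combination of the single-factor analyses from \cite{HR} (types~I and II) and from Theorem \ref{theo1 section 4} (type~III), with the small primes $\ell \in \mathcal{S}$ handled by a case-by-case discussion along the lines of \cite[Paragraph~8]{HR}.
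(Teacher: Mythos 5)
Your proposal follows essentially the same route as the paper: the key step is the product decomposition of the Hodge and $\ell$-adic monodromy groups via Ichikawa and Lombardo (which is exactly where the hypotheses of simplicity, pairwise non-isogeny, exclusion of type IV and fully-Lefschetz-type are used), after which the computation reduces to the single-factor techniques of Theorem \ref{theo1 section 4} and of Hindry--Ratazzi for types I and II. The paper's proof is only a brief sketch citing these same ingredients, so your more detailed gluing of the stabilizer/codimension and property $\mu$ arguments is a faithful expansion of it, and I see no gap.
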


\begin{proof}
In order to prove Theorem \ref{theo3bis} one needs to show that a product of simple abelian varieties which are fully of Lefschetz type is an abelian variety fully of Lefschetz type.
In order to accomplish this we use \cite[Theorem 1A]{Ich} and \cite[Theorem 4.1]{LomHl}. Let $A_i$ and $A_j$ be two simple abelian varieties not isogenous. Assume that $A_i$ and $A_j$ are not of type IV and that they are fully of Lefschetz type. Then we have:
\begin{enumerate}
\item (Ichikawa) $\rm{Hg}(A_i\times A_j)=\rm{Hg}(A_i)\times \rm{Hg}(A_j)$,
\item (Lombardo) $\Hl(A_i\times A_j)=\Hl(A_i)\times \Hl(A_j)$ for every prime number $\ell$.
\end{enumerate}
Hence, using \cite[Theorem 1.14]{HR} and the same techniques developed in the proof of Theorem \ref{theo1} and the proof of \cite[Theorem 1.14]{HR}, we conclude the proof of Theorem \ref{theo3bis}.
\end{proof}

\section{Order of the extension generated by a torsion point}\label{cinq}

The following results are consequences of the method of proof of Theorems \ref{theo1} and \ref{theo3}.

\begin{thm}\label{th:5.1}
Let $A$ be an abelian variety defined over a number field $K$, simple of type III, of relative dimension $h$ and fully of Lefschetz type. There exists a constant $c_1:=c_1(A,K)>0$ such that, for every torsion point $P$ of order $m$ in $A(\overline{K})$, one has:
$$
[K(P):K]\geq c_1^{\omega(m)}m^{2h},
$$
where $\omega(m)$ is the number of prime factors of $m$.
\end{thm}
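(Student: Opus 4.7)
The plan is to apply the machinery of the proof of Theorem \ref{theo1 section 4} to the subgroup generated by a single torsion point. Using Serre's independence of $\ell$-adic representations (after a harmless finite extension of $K$), I factor $[K(P):K]$ over the prime divisors of $m$: if $m = \prod_\ell \ell^{n_\ell}$ and $P_\ell$ denotes the $\ell$-primary part of $P$, then $[K(P):K] = \prod_{\ell | m} [K(P_\ell):K]$ up to a uniformly bounded index. It therefore suffices to establish a uniform lower bound $[K(P_\ell):K] \geq c_1\, \ell^{2h n_\ell}$ for $\ell \notin \mathcal{S}$; the finitely many primes in $\mathcal{S}$ are absorbed into the constant, and the multiplicativity over the $\omega(m)$ prime divisors of $m$ supplies the factor $c_1^{\omega(m)}$.

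Fix now $\ell \notin \mathcal{S}$ and $P$ of order $\ell^n$. Since $\End(A)$ is defined over $K$, the $\End(A)$-closure $H := \End(A) \cdot P$ satisfies $K(H) = K(P)$ and fits the setting of Section \ref{section 4}. Under the splitting $\Ol = \prod_{\lambda | \ell} \Olam$ one has $H = \bigoplus_{\lambda} H_\lambda$, where each $H_\lambda$ is a cyclic $M_2(\Olam)$-module inside $\Tlamcal(A)/\ell^n = \Tlam(A)^{\oplus 2}/\ell^n$. Writing $P_\lambda = (u_\lambda, v_\lambda)$, the $M_2(\Olam)$-orbit equals $(\Olam u_\lambda + \Olam v_\lambda)^{\oplus 2}$, whose saturation inside $\Tlam(A)$ is a free $\Olam$-submodule $W_\lambda$ of rank $r_\lambda \in \{1,2\}$. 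Because the Hodge group acts diagonally on $\Tlam(A)^{\oplus 2}$, the pointwise stabilizer of $H_\lambda$ in $\mathrm{Hg}_{\Olam} = \mathrm{SO}(\Tlam(A), \phi_\lambda)$ coincides with the fixator of $W_\lambda$; by Theorem \ref{th codimension} its codimension equals $\frac{r_\lambda(4h-1-r_\lambda)}{2}$, namely $2h-1$ for $r_\lambda = 1$ and $4h-3$ for $r_\lambda = 2$. Lemma \ref{lemme fusion} together with Lemma \ref{lemma delta} then yield
\[
[K(P):K] \gg\ll_A \delta(H) \cdot \prod_{\lambda|\ell} \ell^{f(\lambda) \cdot \codim(G_{W_\lambda}) \cdot n_\lambda}.
\]

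For $r_\lambda = 2$ (and $h \geq 2$) the exponent $4h-3$ already beats the target $2h$, so the bound follows with room to spare. The delicate case is $r_\lambda = 1$, where the codimension contribution is only $(2h-1)n_\lambda$ and an extra $\ell^{n_\lambda}$ has to come from $\delta(H)$. Here the key input is the Weil pairing: inside $H_\lambda = (\Olam u_\lambda)^{\oplus 2}$ the elements $(u_\lambda, 0)$ and $(0, u_\lambda)$ pair via $\phi_{\ell^\infty}$ to $\phi_\lambda(u_\lambda, u_\lambda)$, and when this value is nonzero modulo $\lambda$ the property $\mu$ from Section \ref{propriete mu} forces $\delta(H) \gg \ell^{n_\lambda}$, precisely yielding the required exponent $2h n_\lambda$. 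The main obstacle is therefore the boundary situation in which $u_\lambda$ happens to be isotropic for $\phi_\lambda$; my plan there is to produce another Weil-pairing relation by selecting a better pair in $M_2(\Olam) \cdot P_\lambda$ built from the two components $u_\lambda, v_\lambda$, failing which one invokes the surjectivity of the multiplier map on stabilizers of maximal isotropic subspaces established in the proof of Lemma \ref{lemma delta}, exactly as in the proof of Theorem \ref{theo1 section 4}. Combining the resulting uniform local bound $[K(P_\ell):K] \gg \ell^{2hn_\ell}$ with the multiplicativity of the first paragraph yields $[K(P):K] \geq c_1^{\omega(m)} m^{2h}$, as required.
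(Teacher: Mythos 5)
Your skeleton is the same as the paper's: Serre's independence criterion reduces to a single prime, and everything hinges on the local bound $[K(P_\ell):K]\gg \ell^{2hn_\ell}$, which the paper simply asserts as a consequence of the method of Theorem \ref{theo1 section 4}. You go further and try to prove it, and you correctly isolate the crux, namely the rank-one case; but neither of your two escape routes there works. If $v_\lambda\in\Olam u_\lambda$, then $H_\lambda=(\Olam u_\lambda)^{\oplus 2}$, and \emph{every} value of the Weil pairing on $H_\lambda$ is an $\Olam$-multiple of $\phi_\lambda(u_\lambda,u_\lambda)$ (pairing $(au_\lambda,bu_\lambda)$ with $(cu_\lambda,du_\lambda)$ gives $(ad-bc)\phi_\lambda(u_\lambda,u_\lambda)$), so when $u_\lambda$ is isotropic no ``better pair'' in $M_2(\Olam)\cdot P_\lambda$ can produce a nonzero pairing. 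And Lemma \ref{lemma delta} says the opposite of what you want: the surjectivity of the multiplier on the stabilizer of an isotropic subgroup is exactly what forces $\delta(H)\eq 1$, i.e.\ it \emph{removes} the cyclotomic factor $\ell^{n_\lambda}$ rather than supplying it. So in the isotropic rank-one case your argument only yields the exponent $2h-1$, not $2h$.

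Moreover this gap cannot be closed by a cleverer argument, because the local bound itself fails in that configuration. Take $P$ supported at a single place $\lambda$ with $f(\lambda)=1$ and $P_\lambda=(u,0)$ with $u$ a primitive isotropic vector of $\Tlam$; such $u$ exists since $(\Vlam,\phi_\lambda)$ is a nondegenerate quadratic space of dimension $2h\geq 5$ over the local field $\Elam$. The Galois orbit of $P$ lies in the primitive isotropic vectors of $\Tlam/\ell^n\Tlam$, a set of size $\asymp \ell^{(2h-1)n}$; equivalently, the stabilizer of $u$ in $\G_m\cdot\mathrm{SO}_{2h}$ has codimension $2h-1$ (Theorem \ref{th codimension} gives $2h-1$ in $\mathrm{SO}_{2h}$, and the multiplier direction is unconstrained because $u$ is isotropic). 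Hence $[K(P):K]\ll \ell^{(2h-1)n}=m^{2h-1}$ for these points, so the exponent $2h$ is unattainable; it only becomes available when the rank-one part is non-isotropic (where fixing $u_\lambda$ pins down the multiplier, giving $(2h-1)+1$) or when the module has rank $\geq 2$ (codimension $4h-3\geq 2h$). In other words, the step you flag as ``delicate'' is not merely delicate: it is a counterexample to the asserted local estimate, and the paper's own one-line justification passes over the same case.
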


\begin{thm}\label{th:5.2}
Let $A$ be an abelian variety defined over a number field $K$, isogenous over $\overline{K}$ to $\prod_{i=i}^{d}A_i^{n_i}$ where the abelian varieties $A_i$ are pairwise non isogenous over $\overline{K}$. We suppose that the abelian varieties $A_i$ are simple, not of type IV, of relative dimension $h_i$ and fully of Lefschetz type. There exists a constant $c_1:=c_1(A,K)>0$ such that, for every torsion point $P$ of order $m$ in $A(\overline{K})$, one has:
$$
[K(P):K]\geq c_1^{\omega(m)}m^{2h},
$$
where $h=\min_{i} h_i$.
\end{thm}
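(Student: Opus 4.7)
The plan is to reduce Theorem~\ref{th:5.2} to its simple-factor analogue Theorem~\ref{th:5.1} by combining Serre's criterion of independence of $\ell$-adic representations with the product decomposition of the Hodge and $\ell$-adic monodromy groups across the simple factors $A_i$. After replacing $K$ by a finite extension of bounded degree (depending only on $A$ and $K$), Serre's criterion gives
$$
[K(P):K] \asymp \prod_{\ell \mid m} [K(P_\ell):K],
$$
where $P_\ell$ denotes the $\ell$-primary component of $P$; moreover, the Ichikawa and Lombardo results invoked in the proof of Theorem~\ref{theo3bis} provide $\mathrm{Hg}(A) = \prod_i \mathrm{Hg}(A_i)$ and $\mathrm{H}_\ell(A) = \prod_i \mathrm{H}_\ell(A_i)$ for every prime $\ell$, so that the image of Galois acting on $A[\ell^n]$ factors, up to a bounded index, as the product of the individual images on each $A_i[\ell^n]$.

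Next I would fix a prime $\ell \mid m$, set $n_\ell := v_\ell(m)$, and decompose $P = (P_{i,k})$ with $P_{i,k} \in A_i(\overline K)$. Since the orders of the coordinates have $\mathrm{lcm}$ equal to $m$, at least one coordinate $P_{i_0, k_0}$ has $\ell$-primary part of order exactly $\ell^{n_\ell}$. The product decomposition of the Galois image then yields
$$
[K(P_\ell):K] \gg [K(P_{i_0, k_0, \ell}):K]
$$
with an implied constant independent of $\ell$ and $n_\ell$. Applying Theorem~\ref{th:5.1} when $A_{i_0}$ is of type III, and the analogous statement from \cite{HR} when $A_{i_0}$ is of type I or II (both of which rely only on the Lefschetz hypothesis that we have assumed), one obtains
$$
[K(P_{i_0, k_0, \ell}):K] \geq c_{A_{i_0}} \, \ell^{2 h_{i_0} n_\ell} \geq c_0 \, \ell^{2 h n_\ell},
$$
where $c_0 := \min_i c_{A_i} > 0$ and $h := \min_i h_i$.

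Multiplying over the primes dividing $m$ then gives
$$
[K(P):K] \gg \prod_{\ell \mid m} c_0 \, \ell^{2 h n_\ell} = c_0^{\omega(m)} \, m^{2h},
$$
which is the desired estimate once the bounded extension degree is absorbed into the final constant $c_1$. The main obstacle is controlling the implied constants in the product decomposition of the Galois image uniformly in $\ell$, including at the exceptional primes of the set $\mathcal{S}$ of Definition~\ref{ensemble S}; this is handled as in the concluding paragraph of the proof of Theorem~\ref{theo1}, by a finite case analysis on those primes. A secondary technical point is to verify that Serre's independence, the Ichikawa product formula and the Lombardo product formula can simultaneously be achieved after a single finite extension of $K$, so that one obtains a single constant $c_1 = c_1(A,K)$ in the statement.
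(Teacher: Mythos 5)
Your proposal is correct and follows essentially the same route as the paper: Serre's independence criterion splits the degree over the primes dividing $m$, and for each prime the estimate is reduced to the simple-factor bound (Theorem \ref{th:5.1} for type III, the Hindry--Ratazzi analogue for types I and II), which is exactly how the paper deduces Theorem \ref{th:5.2} from Theorem \ref{th:5.1}, only stated there much more tersely. The sole cosmetic difference is that your appeal to the Ichikawa--Lombardo product decompositions of $\mathrm{Hg}$ and $\Hl$ is not needed for the lower bound, since the field containment $K(P_{i_0,k_0,\ell})\subseteq K(P_\ell)$ already gives the required inequality.
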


\begin{proof}
Since Theorem \ref{th:5.2} follows from Theorem \ref{th:5.1}, we prove the latter and assume that the abelian variety $A$ is simple. Let $P$ be a torsion point of order $m$. Then $P=P_1+...+P_r$ where each $P_i$ is a point of order $\ell_i^{n_i}$ and $m=\prod_{i=1}^r \ell_i^{n_i}$, for some positive integers $n-i$. Denoting by $H_P$ the $\End(A)$-module generated by $P$, one has $K(P)=K(H_P)$. By the independence criterion of $\ell$-adic representations introduced by Serre, we know that, up to some multiplicative constants, uniform in $m$ and in $P$, we have the following inequality:
$$
[K(P):K]=[K(P_1,..., P_r):K]\gg \prod_{i=1}^r [K(P_i):K].
$$
Moreover, one has the following inequality, up to some multiplicatives constants, uniform in $\ell_i$ and in $P_i$:
$$
[K(P_i):K]\gg \ell_i^{2hn_i}.
$$
Let $\omega(m)$ be the number of prime factors of $m$. There exists a positive constant $c_1:=c_1(A,K)$ such that  
\begin{equation}\label{numerote}
[K(P):K]=[K(P_1,..., P_r):K]\gg \prod_{i=1}^r [K(P_i):K]\gg \prod_{i=1}^r c_1\ell_i^{2hn_i}\geq c_1^{\omega(m)}m^{2h}.
\end{equation}classical estimate
\end{proof}

\begin{rmk}
Using the following classical estimate (see \cite[Chapter I.5]{Ten})
$$
\omega(m)\leq c\cdot \frac{\log m}{\log\log m},
$$
where $c>0$ is an absolute constant, one can rewrite inequality \eqref{numerote}:
\begin{equation}
[K(P):K]\geq m^{2h-\frac{c \log c_1}{\log \log m}}.
\end{equation}
where $c_1$ is a constant that only depends on $A$.
\end{rmk}

\section{New results concerning the Mumford--Tate Conjecture}\label{section 5}

Recall that one of the main hypotheses of Theorems \ref{theo1} and \ref{theo3} is that the abelian variety $A$ must be fully of Lefschetz type. In particular, the Mumford--Tate Conjecture must hold for $A$. In this direction, the following theorems give examples of abelian varieties that are fully of Lefschetz type and for which Theorems \ref{theo1} and \ref{theo3} are unconditional. 

Some results of Noot and Shimura show that there exist abelian varieties, defined over a number field K, of type III in the sense of Albert's classification, such that the Mumford--Tate Conjecture holds for them. More precisely, one can state the following theorem:

\begin{thm}[Noot, Shimura]
Let $h$ be an integer $h\geq 3$, $E$ a totally real field and $D$ a quaternion algebra totally definite over $E$. Then
\begin{enumerate}
\item There exists a moduli space (called Shimura variety) which parametrizes abelian varieties whose endomorphism algebra contains $D$. Moreover, if $A$ corresponds to a general point of the Shimura variety, then $\Endzero(A)=D$ and $\MT=\mathcal{L}(A)$  (Lefschetz group of $A$).
\item There exists a number field $K$ and an abelian variety $A$, defined over $K$, such that $\Endzero(A)=D$, $\MT=\mathcal{L}(A)$ and the Mumford--Tate Conjecture holds for $A$.
\end{enumerate}
\end{thm}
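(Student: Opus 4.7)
The plan is to treat the two parts separately, constructing the Shimura variety and its generic stratum first, and then producing an explicit $\overline{\Q}$-point at which the Mumford--Tate Conjecture can be verified.

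For part $(1)$, I would set up the PEL datum attached to $D$. Fix a positive involution $*$ on $D$ (which exists because $D$ is totally definite over the totally real field $E$), and a non-degenerate $E$-bilinear alternating form $\phi$ on $V=D^h$ satisfying $\phi(dx,y)=\phi(x,d^* y)$. The associated reductive $\Q$-group $G=\mathrm{GSp}(V,\phi)$ is precisely the Lefschetz group $\mathcal{L}(A)$ for an abelian variety of dimension $2eh$ with $D$-action. The Shimura datum $(G,X)$, with $X$ the Hermitian symmetric domain of Hodge structures of type $\{(-1,0),(0,-1)\}$ on $V_{\R}$ polarized by $\phi$ and commuting with $D$, gives a Shimura variety $\mathrm{Sh}(G,X)$ whose complex points parametrize abelian varieties with $D$-multiplication. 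Standard variation-of-Hodge-structure arguments then show that the locus where the Mumford--Tate group is strictly smaller than $G$, or equivalently where the endomorphism algebra is strictly larger than $D$, is a countable union of proper closed analytic subvarieties; on the complementary generic locus one has $\MT=\mathcal{L}(A)$ and $\Endzero(A)=D$ exactly. The condition $h\geq 3$ enters to guarantee that $X$ is non-trivial and that the Hodge group of type III is truly the full orthogonal similitude group (so that $D$ is not forced to enlarge).

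For part $(2)$, the Shimura variety admits a canonical model over its reflex field $E^*\subset \overline{\Q}$. Combining a Hilbert irreducibility argument on the moduli with the density of the generic locus (now in the Zariski sense, after removing the countably many "exceptional" Mumford--Tate substrata via a Baire-category / Noot-style specialization argument), one produces a number field $K\supset E^*$ and a $K$-point corresponding to an abelian variety $A/K$ with $\Endzero(A)=D$ and $\MT=\mathcal{L}(A)$. What remains -- and this is the real content of the second half -- is the Mumford--Tate Conjecture $\MT\otimes_{\Q}\Ql=\Gl$. The plan is to combine Deligne's inclusion $\Gl^{\circ}\subset \MT\otimes\Ql$ with a reverse inclusion obtained from Frobenius tori \`a la Serre--Chi: using Chebotarev, infinitely many Frobenius elements generate a torus of dimension equal to the rank of $\MT$, forcing $\Gl^{\circ}$ to have the correct rank. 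One then invokes the classification of reductive subgroups of $\prod_{\lambda\mid\ell}\mathrm{SO}_{2h,\Elam}$ that are of "Lefschetz-compatible" type; for $h\geq 3$ (outside a thin set of exceptional relative dimensions discussed elsewhere in the paper) this classification, together with the Hodge-cycle type arguments of Pink, rules out every proper subgroup of the Lefschetz group that meets the rank and commutant constraints imposed by the $\Oe$-action and the Weil pairing.

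The main obstacle is precisely this last step, the Mumford--Tate Conjecture itself. Everything in the construction of the Shimura variety and its generic stratum is standard, but passing from "$\Gl^{\circ}$ has the right rank and centralizes $D$" to "$\Gl^{\circ}=(\MT\otimes\Ql)^{\circ}$" requires a fine analysis of which reductive subgroups of $\prod_{\lambda}\mathrm{SO}_{2h,\Elam}$ can arise as $\ell$-adic monodromy groups of abelian varieties of type III; this is the point at which the paper's later restrictions on $h$ (via the sets $\Sigma$ and $\Sigma'$) become essential, and where the new cases of the conjecture established in Section \ref{section 5} feed back into the construction.
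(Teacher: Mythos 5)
You should first note that the paper does not prove this statement at all: it is quoted as a known result, with part $(1)$ attributed to Shimura (via \cite[Theorem 9.1]{lange}) and part $(2)$ to \cite[Th\'eor\`eme 1.7]{Noot}. Your sketch of part $(1)$ is essentially the standard PEL/generic-Mumford--Tate argument and is fine in outline (modulo the slightly loose identification of the similitude group: for type III the relevant Lefschetz group is a form of $\GO$ over $E$, not a symplectic similitude group in the naive sense), so there is no real divergence there.

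The genuine gap is in part $(2)$. Your route --- pick a generic point by Hilbert irreducibility, get the rank of $\Gl^{\circ}$ from Frobenius tori, and then conclude $\Gl^{\circ}=(\MT\otimes\Ql)^{\circ}$ by classifying the reductive subgroups of $\prod_{\lambda\mid\ell}\mathrm{SO}_{2h,\Elam}$ that are compatible with the $D$-action and the pairing --- cannot deliver the theorem as stated. That classification step is exactly where the exceptional dimensions enter: it genuinely fails for the values of $h$ excluded by the sets $\Sigma'$ and by the correction to \cite[Theorem 5.11]{BGK} discussed in \cref{section 5} (tensor-product representations of type $A_{4k+3}$, spin representations, etc.\ really do occur as candidate monodromy groups of the right rank and commutant). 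So at best your argument proves a statement with $h$ restricted, whereas the theorem is asserted for every $h\geq 3$ and every totally real $E$; you even concede that this step is "the main obstacle" rather than closing it. Noot's actual proof is of a different nature and sidesteps the classification entirely: he does not take an arbitrary generic point, but constructs $A$ as a carefully chosen deformation of a CM abelian variety with ordinary reduction (Serre--Tate theory), arranged so that the $\ell$-adic image is visibly open in the generic Mumford--Tate group; openness is forced by the construction, not deduced from a list of possible subgroups, which is why no condition on $h$ beyond $h\geq 3$ is needed. If you want to keep your strategy, you must either import Noot's specialization argument for the openness step or accept the dimension restrictions, in which case you have proved a strictly weaker theorem than the one stated.
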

\begin{rmk}
The first point is due to Shimura (see \cite[Theorem 9.1]{lange}) and the second point is the \cite[Th\'eor\`eme 1.7]{Noot}.
\end{rmk}

The following theorem gives a correction of a subtle point in \cite[Theorem 5.11]{BGK}:

\begin{thm}\label{theo BGK}
Let $A$ be a simple abelian variety of dimension $g$ and of type III. Let us recall that $g=2eh$ where $e=[E:\Q]$ and $h$ is the relative dimension. We assume that  
\begin{enumerate}
\item $h\in \{2k+1, k \in \N\}\setminus \left\{\frac{1}{2}\binom{2^{m+2}}{2^{m+1}}, m\in \N\right\}$.
\end{enumerate}
Then $A$ is fully of Lefschetz type.
\end{thm}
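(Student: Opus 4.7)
The plan is to establish both defining conditions of ``fully of Lefschetz type'' working $\lambda$-adically and adapting the strategy of \cite{BGK}. Fix a prime $\ell\notin\mathcal{S}$; by the decomposition recalled in \cref{BGK}, one has $\Tl(A)\otimes_{\Zl}\Olam=\Tlam\oplus\Tlam$ with $\Tlam$ a free $\Olam$-module of rank $2h$ equipped with a symmetric, Galois-equivariant, non-degenerate pairing $\phi_\lambda$. Writing $\Wlam=\Tlam\otimes\Ql$, the $\lambda$-adic image of $G_K$ as well as $\Hg\otimes\Ql$ both embed in $\mathrm{SO}(\Wlam,\phi_\lambda)$, and globally in $\Res\mathrm{SO}_{2h}\otimes\Ql$. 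The task reduces to showing that both inclusions are equalities up to finite index: this yields simultaneously $\MT=\mathcal{L}(A)$ and the Mumford--Tate Conjecture.

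The next step would be to invoke Faltings's semisimplicity theorem in order to conclude that $\Wlam$ is absolutely irreducible under the algebraic monodromy group $H_\lambda$, and to exploit the fact that $\Vl(A)$ underlies a polarized Hodge structure of weight $1$: the Hodge cocharacter then endows $\Wlam$ with the structure of a minuscule representation of $H_\lambda$. I would then apply the classification, due to Serre and Pink and used systematically in \cite{BGK}, of connected reductive subgroups of $\mathrm{GL}(\Wlam)$ admitting a faithful minuscule representation preserving a non-degenerate symmetric bilinear form of dimension $2h$. This produces a finite explicit list indexed by Dynkin type and fundamental weight, and the remaining task is to reduce it to the single possibility $\mathrm{SO}_{2h}$ acting standardly.

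The main obstacle --- and the origin of the correction to \cite[Theorem 5.11]{BGK} pointed out by Lombardo \cite[Remark 2.27]{LomHl} --- lies precisely in this last classification step. For $h$ odd, one eliminates spin and half-spin representations by dimension comparison ($2^h$ and $2^{h-1}$ exceed $2h$ for $h\geq 3$), rules out $\mathrm{Sp}_{2h}$ since its invariant form is alternating rather than symmetric, and checks that minuscule representations of exceptional groups have fixed dimensions $27$ or $56$ not matching $2h$ for $h$ odd. The delicate remaining case is that of self-dual exterior-power representations $\Lambda^{n/2}V$ of $\mathrm{SL}(V)$ with $\dim V=n$ divisible by $4$, which carry a symmetric invariant pairing of dimension $\binom{n}{n/2}$; a combinatorial argument on $2$-adic valuations identifies the coincidences $2h=\binom{n}{n/2}$ compatible with the other constraints as precisely the set $h=\frac{1}{2}\binom{2^{m+2}}{2^{m+1}}$, $m\in\N$, thereby explaining why these values must be excluded. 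Once this possibility is ruled out, one concludes $H_\lambda=\mathrm{SO}(\Wlam,\phi_\lambda)$, and a parallel Hodge-theoretic argument yields $\Hg=\Res\mathrm{SO}_{2h}$, completing the proof that $A$ is fully of Lefschetz type.
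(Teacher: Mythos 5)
Your proposal follows essentially the same route as the paper: it reproduces the Banaszak--Gajda--Kraso\'n strategy via the classification of minuscule orthogonal representations, isolates the overlooked middle exterior power of type $A_{4k+3}$ (the orthogonal representation $\Lambda^{2k+2}$ of $\mathrm{SL}_{4k+4}$ of dimension $\binom{4k+4}{2k+2}$), and uses a $2$-adic valuation computation to show that the coincidences with $h$ odd occur exactly for $h=\frac{1}{2}\binom{2^{m+2}}{2^{m+1}}$, which is precisely the paper's correction to \cite[Theorem 5.11]{BGK}. The only inaccuracy is your parenthetical elimination of spin and half-spin factors: the correct reason is that their dimensions are powers of $2$, whereas $2h$ with $h>1$ odd is $\equiv 2 \bmod 4$, not that ``$2^{h-1}$ exceeds $2h$ for $h\geq 3$'' (false for $h=3$); this slip does not affect the overall argument.
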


\begin{rmk}
Theorem \ref{theo BGK} is stated in \cite{BGK} with the hypothesis $h$ odd, nevertheless, it seems to be important to exclude the values of the form $\frac{1}{2}\binom{2^{m+2}}{2^{m+1}}$ with $m\in \N$, as pointed out by \cite[Remark 2.27]{LomHl}

The proof of Theorem \ref{theo BGK} follows closely the one from \cite[Theorem 5.11]{BGK} and we only gives details when the proof departs from \cite[Theorem 5.11]{BGK}. Nevertheless it is important to point out that the fact that we need to assume that the relative dimension $h$ does not belong to $\left\{\frac{1}{2}\binom{2^{m+2}}{2^{m+1}}, m\in \N\right\}$ has not been taken into consideration in \cite[Lemma 4.13]{BGK}.
\end{rmk}

With the notations of \cite[Paragraph 4]{BGK} we consider an ideal $\lambda$ in $\Oe$ such that $\lambda|\ell$. Let us introduce the following $\lambda$-adic representation:

$$
\rho_{\lambda}^{\circ}:G_K \to \GL(\Vlam).
$$
We denote by $G_{\lambda}$ the Zariski closure of $\rho_{\lambda}^{\circ}(G_K)$ and $\mathfrak{g}_{\lambda}$ the Lie algebra of $G_{\lambda}$. Let $\phi_{\lambda^{\infty}}^{\circ}$ be the $\lambda$-adic pairing defined over $\Vlam$, with notations borrowed from \cite{BGK}. We can therefore state the following lemma:

\begin{lemma}(\cite[Lemma 4.13]{BGK})\label{lemme413}
Under the hypothesis of Theorem \ref{theo BGK} we have the following equality:
$$
\mathfrak{g}_{\lambda}^{ss}=\mathfrak{so}_{\Vlam,\phi_{\lambda^{\infty}}^{\circ}}.
$$
\end{lemma}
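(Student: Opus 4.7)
The plan is to follow the argument of \cite[Lemma 4.13]{BGK} but to repair the gap flagged by \cite[Remark 2.27]{LomHl} by keeping careful track of when $\mathfrak{sl}_{2k}$ can embed into $\mathfrak{so}_{2h}$ through its middle exterior power. First I would record that $\mathfrak{g}_{\lambda}$ is reductive (this is Faltings, together with the fact that for $A$ of type III the $\lambda$-adic representation is semisimple), and that the invariance of the pairing $\phi_{\lambda^\infty}^{\circ}$ under $G_K$ forces the inclusion $\mathfrak{g}_{\lambda}^{ss}\subset \mathfrak{so}_{V_\lambda,\phi_{\lambda^\infty}^{\circ}}$. Thus the whole problem is to exclude proper semisimple Lie subalgebras $\mathfrak{h}\subset\mathfrak{so}_{2h}$ that could realize $\mathfrak{g}_{\lambda}^{ss}$.

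Next I would reduce to the irreducible case. Using the minuscule / Hodge cocharacter attached to the Mumford--Tate situation (the cocharacter of $\mathfrak{g}_\lambda$ coming from the Hodge decomposition has the same type as the one for the full $\mathfrak{so}_{2h}$), one shows that $\mathfrak{g}_{\lambda}^{ss}$ must act irreducibly on $V_\lambda$ and preserve the symmetric bilinear form $\phi_{\lambda^\infty}^{\circ}$ with an irreducible self-dual action of orthogonal type. This is the same step as in \cite[Lemma 4.13]{BGK}; the minuscule cocharacter has nontrivial action, which rules out the trivial factor and the purely symplectic cases.

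I would then invoke the classification (Dynkin, or the refined list used by Pink and by Hindry--Ratazzi) of pairs $(\mathfrak{h},W)$ with $\mathfrak{h}$ simple, $W$ a faithful irreducible representation of orthogonal type and $\dim W=2h$. The hypothesis that $h$ is odd already kills the bulk of the possibilities: it excludes all half-spin representations (their dimensions are powers of $2$), the standard representation of $\mathfrak{so}_{2h+1}$ (even dimension forbids this after rank check), most adjoint representations, and the symmetric/exterior powers that would force $h$ to be even. The only surviving candidate, apart from the desired $\mathfrak{so}_{2h}$, is the middle exterior power $\Lambda^{k}\mathrm{std}$ of $\mathfrak{sl}_{2k}$: its dimension is $\binom{2k}{k}$, it is self-dual of orthogonal type precisely when $k$ is even, and the equation $2h=\binom{2k}{k}$ with $k$ even forces $k=2^{m+1}$ to match the odd parity of $h$ (since the $2$-adic valuation of $\binom{2k}{k}$ equals the number of nonzero digits of $k$ in base $2$, by Kummer, and we need $\binom{2k}{k}/2$ odd).

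This is exactly where the hypothesis $h\notin\{\tfrac{1}{2}\binom{2^{m+2}}{2^{m+1}},\,m\in\N\}$ intervenes, and this is the main obstacle: it is the precise numerical condition needed to exclude the $\mathfrak{sl}_{2^{m+2}}\hookrightarrow\mathfrak{so}_{2h}$ case, which had not been ruled out in \cite[Lemma 4.13]{BGK}. Once this candidate is eliminated, every remaining entry of the classification list is incompatible with $h$ being odd, so $\mathfrak{g}_{\lambda}^{ss}$ must coincide with $\mathfrak{so}_{V_\lambda,\phi_{\lambda^\infty}^{\circ}}$, concluding the proof.
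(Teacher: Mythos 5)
Your proposal is correct and follows essentially the same route as the paper: rerun the argument of BGK's Lemma 4.13 over the tables of minuscule weights, note that oddness of $h$ leaves only the standard representation of type $D$ and the middle exterior power of type $A_{4k+3}$, and eliminate the latter by the $2$-adic valuation count ($v_2\binom{2k}{k}=s_2(k)$, the number of nonzero binary digits), which is precisely where the excluded set $\left\{\tfrac{1}{2}\binom{2^{m+2}}{2^{m+1}}\right\}$ enters — the paper performs the same computation via the binary expansion of $k+1$. The only points to keep explicit are that the classification you invoke must be restricted to minuscule (weight $\{0,1\}$) factors — otherwise "$h$ odd" would not exclude, say, adjoint-type orthogonal representations — and that the tensor decomposition can contain only one (necessarily orthogonal) factor because every self-dual minuscule representation is even-dimensional while $2h\equiv 2 \pmod 4$; both facts are part of the BGK argument the paper quotes.
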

 
The proof of Lemma \ref{lemme413} is almost the same as the one on pages 175-176 of \cite{BGK}. Let us recall some notations. Let  $\overline{\Vlam}:=\Vlam\otimes \Ql$ be a Lie module for which we have the following decomposition:

\begin{equation}\label{pt}
\overline{\Vlam}=E(\omega_1)\otimes ... \otimes E(\omega_t),
\end{equation}
where for every $1\leq i \leq t $, $E(\omega_i)$ is an irreducible module of the Lie algebra of a maximal weight $\omega_i$.

\begin{figure}[!h]
\begin{center}

\begin{tabular}{|c|c|c|c|}
   \hline 
   Root system & Minuscule weight & Dimension & Duality properties \\
   \hline 
   \multirow{2}{*}{$A_\ell \, (\ell\geq1)$} & \multirow{2}{*}{$\omega_r, \, 1\leq r\leq \ell$} & \multirow{2}{*}{$\binom{\ell+1}{r}$} & $(-1)^r$, if $r=\frac{\ell+1}{2}$\\ 
   & & &  $0$ otherwise\\ 
   \hline 
   \multirow{2}{*}{$B_\ell \, (\ell\geq2)$} & \multirow{2}{*}{$\omega_{\ell}$} & \multirow{2}{*}{$2^{\ell}$} & $+1$, if $\ell \equiv 3,0 \mod 4$\\ 
   & & &  $-1$, if $\ell \equiv 1,2 \mod 4$\\ 
   \hline 
   $C_\ell \, (\ell\geq2)$ & $\omega_1$ & $2\ell$ & $-1$\\
   \hline 
   \multirow{4}{*}{$D_\ell \, (\ell\geq3)$} & $\omega_1$ & $2\ell$ & $+1$\\
   \cline{2-4}
   &    \multirow{3}{*}{$\omega_{\ell-1}, \, \omega_{\ell}$} &    \multirow{3}{*}{$2^{\ell-1}$} & $+1$, if $\ell \equiv 0 \mod 4$ \\
   &&& $-1$, if $\ell \equiv 2 \mod 4$\\
   &&& $0$, if $\ell \equiv 1 \mod 2$\\
   
   \hline
\end{tabular}
\caption{Minuscule weight for the classical Lie algebras}\label{table minuscule}
\end{center}
\end{figure}

The penultimate sentence of the proof of Lemma \ref{lemme413} states that, since h is odd, the investigation of the tables of minuscule weights (see figure \ref{table minuscule} or \cite[Table 1 et 2, P. 213--214]{bourbaki}) and the dimensions of associated representations shows that the tensor product \eqref{pt} can contain only one factor which is orthogonal. Therefore one has two possibilities:
\begin{enumerate}
\item either of type $D_n$, minuscule weight $w_1$ and dimension $2n$,

\item or type $A_{4k+3}$, minuscule weight $w_{2k+2}$ and dimension $\binom{4k+4}{2k+2}$,
\end{enumerate}
where $k\in \N$, such that $\ell=4k+3$ and $r=2k+2$ in figure \ref{table minuscule}.

It seems that the last possibility has been overlooked in \cite{BGK} when the authors defined the class $\mathcal{B}$ of abelian varieties. Let us discuss the last point. 

The representation $A_{4k+3}$ is orthogonal indeed because $r=2k+2$ is an even number. Moreover we know that $\binom{4k+4}{2k+2}=2h$, because there is only one orthogonal representation. We settle in the next lemma the case in which values of the integer $k$ the number $\frac{1}{2}\binom{4k+4}{2k+2}$ is odd.

\begin{lemma}
The integer $\frac{1}{2}\binom{4k+4}{2k+2}$ is odd if and only if $k+1=2^m$ for an integer $m\geq1$.
\end{lemma}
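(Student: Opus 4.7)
Setting $n=k+1$, the statement becomes: $\tfrac{1}{2}\binom{4n}{2n}$ is odd if and only if $n$ is a power of $2$ (with $n\ge 2$ corresponding to $m\ge 1$). The strategy is to compute the $2$-adic valuation of $\binom{4n}{2n}$ exactly, and then read off the parity of $\tfrac{1}{2}\binom{4n}{2n}$.

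The main tool is Kummer's theorem, which asserts that $v_p\binom{a+b}{a}$ equals the number of carries occurring in the base-$p$ addition of $a$ and $b$, or equivalently (via Legendre's formula)
$$v_p\binom{a+b}{a}\;=\;\frac{1}{p-1}\bigl(s_p(a)+s_p(b)-s_p(a+b)\bigr),$$
where $s_p(\cdot)$ denotes the digit sum in base $p$. I would apply this to $\binom{2n+2n}{2n}$ with $p=2$, which yields
$$v_2\binom{4n}{2n}\;=\;2s_2(2n)-s_2(4n)\;=\;s_2(n),$$
since multiplying an integer by a power of $2$ merely shifts its binary expansion and so leaves the digit sum unchanged.

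It then follows immediately that $\tfrac{1}{2}\binom{4n}{2n}$ is odd precisely when $v_2\binom{4n}{2n}=1$, i.e.\ when $s_2(n)=1$, and this in turn is equivalent to $n$ being a power of $2$. Translating back to $k=n-1$ recovers the stated equivalence in the regime relevant to Lemma~\ref{lemme413}.

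There is no genuine obstacle here: Kummer's theorem (or, equivalently, Legendre's formula for $v_p(m!)$) delivers the whole computation in a single line, and the only minor verification needed is the transparent identity $s_2(2^{j}n)=s_2(n)$ coming from the binary expansion. The only mild subtlety is bookkeeping the range of $m$ (whether $m\ge 0$ or $m\ge 1$), which is dictated by whether one wishes to include the boundary case $n=1$, i.e.\ $k=0$, in the conclusion.
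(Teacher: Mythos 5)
Your proof is correct and takes essentially the same route as the paper: both arguments reduce to the identity $v_2\left(\binom{4k+4}{2k+2}\right)=s_2(k+1)$ (the paper extracts it from the factorization $\binom{4k+4}{2k+2}=\frac{2^{k+1}}{(k+1)!}\cdot(\text{odd})$ together with Legendre's formula, while you invoke Kummer's theorem directly), after which oddness of $\frac{1}{2}\binom{4k+4}{2k+2}$ is equivalent to $s_2(k+1)=1$, i.e.\ to $k+1$ being a power of $2$. Your remark about the boundary case $k=0$ (whether $m\geq 0$ or $m\geq 1$) is also apt, since the paper's proof tacitly assumes $k+1\geq 2$ when writing the binary expansion with leading digit $\varepsilon_m=1$, $m\geq 1$.
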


\begin{proof}
First one needs to study the parity of the integer $\binom{4k+4}{2k+2}$. More precisely, one needs to compute its $2$-adic valuation. One has the following equality:
\begin{equation}\label{egalité}
\binom{4k+4}{2k+2}=\frac{2^{k+1}}{(k+1)!}\times \text{(odd number)}.
\end{equation}
The equality above shows that the $2$-adic valuation of $\binom{4k+4}{2k+2}$ is related to the $2$-adic valuation of $(k+1)!$.
Let us consider the binary development of $k+1=\sum_{i=0}^m \varepsilon_i 2^i$, with $m\geq1$ and $\varepsilon_m=1$.
One knows that $v_2((k+1)!)=\sum_{h\geq1} \lfloor \frac{k+1}{2^h} \rfloor$.
From equality \eqref{egalité}, we find that
$$
v_2(\binom{4k+4}{2k+2})=k+1-v_2((k+1)!)=\varepsilon_0+...+\varepsilon_m=\varepsilon_0+...+\varepsilon_{m-1}+1.
$$
Thus
$$
\frac{1}{2}\binom{4k+4}{2k+2} \text{is odd} \iff \varepsilon_0=...=\varepsilon_{m-1}=0 \iff k+1=2^m \iff k=2^m-1.
$$
\end{proof}


The main goal of the rest of the paragraph is to extend Pink's results in order to prove new cases of the Mumford--Tate Conjecture. Concerning this last conjecture, Pink applies his results to the case where $\End(A)=\Z$, nevertheless, as he pointed out (\cite[Remark p. 33]{Pink98}) his results can be generalized. Guided by this remark, we obtained new results on simple abelian varieties of type III, which we are going to present. More precisely we are going to state an analogous result to \cite[Proposition 4.7]{Pink98} in the case of absolute irreducible representations which are orthogonal.

Let $A$ be a simple abelian variety of type III defined over a number field $K$ of dimension $\dim A=g$. Thanks to \cite[Theorem 3.23]{BGK}, we know that the associated representation of the vector space $\Vlam:=\Tlam\otimes \Elam$ is absolutely irreducible and that $\Vl = \bigoplus_{\lambda|\ell} \Vlam \oplus \Vlam$. As in the previous section, we consider the $\lambda$-adic representation $\rho_{\lambda}^{\circ}$, the algebraic group $\Glam$ together with his associated Lie algebra $\mathfrak{g}_{\lambda}$. 

In order to state an analogue of \cite[Proposition 4.7]{Pink98}, we need to define Mumford--Tate pairs (see \cite[Definition 4.1]{Pink98}). Let $G$ be a reductive algebraic group and $\rho$ a faithful finite dimensional representation of $G$.

\begin{defn}
The pair $(G,\rho)$ is a weak Mumford--Tate pair of weight $\{0,1\}$, if and only if there exist cocharacters $\mu_i:\G_{m,\overline{K}}\to G_{\overline{K}}$ for every $1\leq i\leq k $ such that
\begin{itemize}
\item $G_{\overline{K}}$ is generated by the images of all $G(\overline{K})$--conjugates of all $\mu_i$, and
\item the weights of each $\rho\circ\mu_i$ are in $\{0,1\}$.
\end{itemize}
The pair $(G,\rho)$ is a strong Mumford--Tate pair of weight $\{0,1\}$, if and only if the pair is a weak Mumford--Tate pair and the cocharacters are conjugate under the action of $\Gal(\overline{K}/K)$.
\end{defn} 
   
We assume that the center of the endomorphism algebra $E=Z(\Endzero(A))$ is equal to $\Q$. Then, $[E:\Q]=1$ and $g=2eh=2h$. Thanks to \cite[Fact 5.9]{Pink98} we know that, in this case, all weak Mumford--Tate pairs are actually strong Mumford--Tate pairs. 
   
Let us recall that for a reductive algebraic group $G$ we have $G=\G_{m,\overline{K}}\cdot G^{der}$ where $G^{der}$ is the derived group of $G$. The representation $\rho$ can be decomposed into the exterior tensor product:
$$
\rho \cong \rho_0 \otimes \rho_1 \otimes ... \otimes \rho_s,
$$   
where every $\rho_i$ is an absolute irreducible representation, $\rho_0$ is the standard representation associated to $\G_{m,\overline{K}}$ and the $\rho_i$ is the representation associated to the factor $G_i$, for $1\leq i\leq s$.
   
\begin{prop}(\cite[Proposition 4.4]{Pink98})\label{prop 4.4}
If $(G,\rho)$ is a strong Mumford--Tate pair of weight $\{0,1\}$ over $K$ such that the representation $\rho$ is absolutely irreducible, then $G^{der}$ is either almost simple over $K$ or trivial. In particular, all simple tensor factors of $(G,\rho)$ over $\overline{K}$ have the same type as in figure \ref{table minuscule} with the same integer $\ell$.
\end{prop}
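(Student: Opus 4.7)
The plan is to work geometrically over $\overline{K}$: decompose $G^{der}$ into its almost simple factors, exploit the tensor decomposition of $\rho$ forced by absolute irreducibility, and pin each cocharacter $\mu_i$ to essentially one factor by means of the weight constraint $\{0,1\}$. Galois transitivity built into the strong Mumford--Tate hypothesis will then propagate to transitivity on the factors and give the conclusion.

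First I decompose $G^{der}_{\overline{K}} = G_1 \cdots G_s$ as an almost direct product of almost simple factors, so that absolute irreducibility of $\rho$ produces the exterior tensor factorisation $\rho \cong \rho_0 \otimes \rho_1 \otimes \cdots \otimes \rho_s$, with $\rho_0$ a character of the central torus and $\rho_i$ an absolutely irreducible representation of $G_i$. Then I exploit the weight hypothesis. Fix a defining cocharacter $\mu = \mu_j$ and write $\mu^{(i)}$ for its projection to $G_i$. If $W_i$ denotes the set of weights of $\rho_i \circ \mu^{(i)}$, the weights of $\rho \circ \mu$ are the sums $w_0 + w_1 + \cdots + w_s$ with $w_i \in W_i$. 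If two distinct indices $i, i' \in \{1,\ldots,s\}$ both satisfied $|W_i|, |W_{i'}| \geq 2$, these sums would span an interval of length $\geq 2$, contradicting $\{0,1\}$. Hence at most one $\mu^{(i)}$ is non-central in $G_i$, and for that single factor the weights of $\rho_i \circ \mu^{(i)}$ form $\{a, a+1\}$, so $\rho_i$ is minuscule and $(G_i, \rho_i)$ appears in one of the rows of Figure \ref{table minuscule}.

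Next I combine the generation and Galois hypotheses. Because each $G_i$ is normal in $G_{\overline{K}}$, conjugation by any element of $G(\overline{K})$ sends $\mu$ to a cocharacter still supported in $G_{i(\mu)}$; since the $G(\overline{K})$-conjugates of $\mu_1, \ldots, \mu_k$ generate $G^{der}_{\overline{K}}$, every factor $G_i$ must equal $G_{i(\mu_j)}$ for some $j$, otherwise $G^{der}$ would reduce to a proper subproduct. If no $\mu_j$ is non-central in any factor, this forces $G^{der}$ trivial. Otherwise the strong Mumford--Tate hypothesis provides a transitive action of $\Gal(\overline{K}/K)$ on $\{\mu_1,\ldots,\mu_k\}$, and since $i(\sigma \mu_j) = \sigma \cdot i(\mu_j)$ for every $\sigma$, the induced action on $\{G_1,\ldots,G_s\}$ is transitive. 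This is precisely the statement that $G^{der}$ is almost simple over $K$. Galois conjugate almost simple factors share the same root system and rank, and the representations $\rho_i$ are Galois conjugate minuscule representations of equal dimensions, so all geometric tensor factors of $(G,\rho)$ correspond to the same row of Figure \ref{table minuscule} with the same integer $\ell$.

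The main obstacle is the weight calculation in the second paragraph: one must rule out the possibility that two distinct almost simple factors contribute non-trivially to the same cocharacter. Once this combinatorial observation is established, the normality of the factors, the generation assumption, and the Galois transitivity built into strong Mumford--Tate pairs combine essentially mechanically to deliver both the almost simplicity over $K$ and the uniform-type statement.
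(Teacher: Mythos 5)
There is no internal proof to compare against here: the paper simply quotes this statement from \cite[Proposition 4.4]{Pink98} and gives no argument, so your proposal is really a reconstruction of Pink's original proof. Your outline is the right one and matches his: tensor-decompose $\rho$ along the almost simple factors of $G^{der}_{\overline{K}}$, use additivity of weights to show each defining cocharacter is non-central on at most one factor, use the generation hypothesis to see every factor is hit, and use the Galois conjugacy in the strong Mumford--Tate hypothesis, together with normality of the factors under inner conjugation, to get transitivity of $\Gal(\overline{K}/K)$ on the factors, hence almost simplicity over $K$ and equality of the types and of $\ell$ across factors.

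Two steps need more care than you give them. First, the ``projections'' $\mu^{(i)}$ need not exist as genuine cocharacters of the $G_i$, since $Z^0\times G_1\times\cdots\times G_s\to G$ is only an isogeny; taken literally, your claim that distinct elements of $W_i$ differ by at least $1$ is then not obvious (a multiple $n\mu$ lifts, and naively one only gets a bound against $n$). The argument is rescued by working inside a maximal torus $T$ of $G$ containing the image of $\mu$: two weights of $\rho$ that agree on all factors but the $i$-th differ by a genuine character of $T$, so their pairings with $\mu$ differ by an integer, and the diameter of the weight values is the sum of the per-factor diameters; this is the form in which the ``at most one active factor'' conclusion should be drawn. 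Second, and more seriously, the inference ``the weights of $\rho_i$ composed with the relevant cocharacter lie in two consecutive integers, so $\rho_i$ is minuscule and appears in figure \ref{table minuscule}'' is not a tautology: it is precisely the nontrivial classification underlying Pink's Table 4.6 (equivalently, the classification of irreducible representations admitting a non-central cocharacter with weights in two consecutive integers). You must either prove it or cite it; without it, the part of the statement asserting that the tensor factors are of the types listed in figure \ref{table minuscule} is unsupported, even though the weaker assertion that all factors have the same type with the same $\ell$ already follows from the Galois conjugacy argument alone.
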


From figure \ref{table minuscule} one obtains the following table of orthogonal representations:
\begin{figure}[!h]
\begin{center}

\begin{tabular}{|c|c|c|c|}
   \hline 
   Root system & Minuscule weight & Dimension & Duality properties \\
   \hline 
   $A_\ell \, (\ell\geq1)$ & $\omega_r, \, 1\leq r\leq \ell$ & $\binom{\ell+1}{r}$ & $+1$ with $r=\frac{\ell+1}{2}\equiv 0 \mod 2$\\ 
   \hline 
   $B_\ell \, (\ell\geq2)$ & $\omega_{\ell}$ & $2^{\ell}$ & $+1$, if $\ell \equiv 3,0 \mod 4$\\ 
   \hline 
   \multirow{2}{*}{$D_\ell \, (\ell\geq3)$} & $\omega_1$ & $2\ell$ & $+1$\\
   \cline{2-4}
   &  $\omega_{\ell-1}, \, \omega_{\ell}$ &   $2^{\ell-1}$ & $+1$, if $\ell \equiv 0 \mod 4$ \\   
   \hline
\end{tabular}
\caption{Minuscule weight for orthogonal Lie algebras}\label{table orthogonale}
\end{center}
\end{figure}

Let us recall that we denote by $s$ the number of simple factors of $\rho_{|G^{der}}$. The following proposition is analogous to \cite[Proposition 4.5]{Pink98}:

\begin{prop}
Consider a strong Mumford--Tate pair $(G,\rho)$ of weight $\{0,1\}$ over $K$ such that the representation $\rho_{|G^{der}}$ is absolutely irreducible and orthogonal. Then all simple tensor factors of $(G,\rho)$ over $\overline{K}$ are either
\begin{itemize}
\item orthogonal, for an arbitrary $s\geq 1$;
\item or symplectic, when $s$ is even.
\end{itemize}
\end{prop}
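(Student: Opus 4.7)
The plan is to mirror Pink's argument for his analogous Proposition~4.5 in \cite{Pink98}, reducing the claim to a parity count of duality signs in the tensor factorisation of $\rho_{|G^{der}}$.

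First I would pass to $\overline{K}$ and write the derived group as an almost direct product of almost-simple factors $G_1 \times \cdots \times G_s$. Since $\rho_{|G^{der}}$ is absolutely irreducible, it factors as an external tensor product $\rho \cong \rho_1 \boxtimes \cdots \boxtimes \rho_s$, with each $\rho_i$ an absolutely irreducible representation of $G_i$. Proposition \ref{prop 4.4} then forces every pair $(G_i, \rho_i)$ to be of the same type in the sense of Figure \ref{table minuscule}: same Dynkin type, same rank $\ell$, and same minuscule fundamental weight.

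Next I would exploit the orthogonality hypothesis. That $\rho$ is orthogonal means $\rho \cong \rho^\vee$ via a symmetric bilinear form, and since a tensor product of absolutely irreducible representations of a direct product of groups is self-dual if and only if each factor is, every $\rho_i$ is self-dual with a duality sign $\varepsilon_i \in \{+1, -1\}$ (the value $0$ appearing in Figure \ref{table minuscule} is now excluded). Up to a scalar, the invariant form on $\rho$ is the tensor product of the invariant forms on the factors, so its symmetry sign equals $\prod_{i=1}^{s} \varepsilon_i$, and orthogonality of $\rho$ forces $\prod_{i=1}^{s} \varepsilon_i = +1$. Because Proposition \ref{prop 4.4} guarantees that all the $(G_i, \rho_i)$ share a common duality sign $\varepsilon$, this becomes $\varepsilon^{s} = +1$, which yields exactly the announced dichotomy: either $\varepsilon = +1$ and every simple factor is orthogonal (with $s \geq 1$ unconstrained), or $\varepsilon = -1$ and every simple factor is symplectic (with $s$ necessarily even).

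The main obstacle is justifying that ``same type'' in Proposition \ref{prop 4.4} really does pin down a common duality sign across the $\rho_i$. For the families $B_\ell$, $C_\ell$ and for the standard representation $\omega_1$ of $D_\ell$ this is immediate from Figure \ref{table minuscule}; the delicate cases are the half-spin representations $\omega_{\ell-1}, \omega_{\ell}$ of $D_\ell$, which may be interchanged by an outer automorphism but share the same duality sign, and the fundamental representations $\omega_r$ of $A_\ell$, where one must verify that the notion of type implicit in Proposition \ref{prop 4.4} fixes the index $r$ and not merely the rank $\ell$, so that all the $\varepsilon_i$ coincide. A case-by-case inspection of Figure \ref{table orthogonale}, combined with the tensor-product rule for duality signs recalled above, then closes the argument.
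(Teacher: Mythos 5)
Your proposal is correct and follows essentially the same route as the paper: invoke Proposition \ref{prop 4.4} to force all simple tensor factors to have the same type, then use multiplicativity of duality signs (a tensor product of symplectic representations being orthogonal precisely when their number is even) to obtain the dichotomy; the paper's proof is just a terser version of this parity argument. The caveat you raise about a common duality sign is indeed the only delicate point, and it is settled exactly as you sketch, since each factor must itself be self-dual (so in type $A_\ell$ the weight is forced to be $\omega_{(\ell+1)/2}$) and the two half-spin weights of $D_\ell$ carry the same sign.
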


\begin{proof}
Since $\rho_{|G^{der}}$ is absolutely irreducible and orthogonal, and thanks to Proposition \ref{prop 4.4}, we know that all simple factors of the tensor product of $(G,\rho)$ over $\overline{K}$ have the same type. One has two possible cases. Either $s$ is arbitrary and all simple factors are orthogonal, or $s$ is even, and all simple factors are symplectic. Recalling that a tensor product of two symplectic representations is orthogonal, which conclude the proof.
\end{proof}

We can therefore state an analogous proposition of \cite[Proposition 4.7]{Pink98}:

\begin{prop}\label{prop 6.10}
Consider a strong Mumford--Tate pair $(G,\rho)$ of weights $\{0,1\}$ over $K$ such that $\rho_{|G^{der}}$ is absolutely irreducible and orthogonal. Assume that $n:=\dim \rho$ is greater than 1 and $n$ is neither:
\begin{itemize}
\item a power of $2$ nor;
\item $(2a)^k$ with $k$ an even integer, $k\geq 2$ and $a\geq2$ nor;
\item a power of $\binom{2k}{k}$ where $k$ is an even integer $k\geq 2$ nor;
\item an even power of $\binom{2k}{k}$ where $k$ is an odd integer $k\geq 1$.
\end{itemize}
Then we have $G=\Gm \cdot SO_{n,\Q}$.
\end{prop}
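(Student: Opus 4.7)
The strategy is to adapt Pink's proof of his Proposition 4.7 (the symplectic analogue) to the orthogonal setting, using the preceding proposition that controls the symplectic-versus-orthogonal character of tensor factors.

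First I would use Proposition 4.4 to reduce the structure of $G^{der}$ over $\overline{\mathbb{Q}}$. Since $\rho_{|G^{der}}$ is absolutely irreducible and $n>1$, Proposition 4.4 implies that $G^{der}$ is almost $\mathbb{Q}$-simple, and after base change to $\overline{\mathbb{Q}}$ we may write $G^{der}_{\overline{\mathbb{Q}}}\cong G_1\times\cdots\times G_s$ with all factors of the same absolute root-system type. The representation correspondingly decomposes as an exterior tensor product $\rho_{|G^{der}_{\overline{\mathbb{Q}}}}\cong \rho_1\boxtimes\cdots\boxtimes\rho_s$, where each $\rho_i$ is a minuscule representation of $G_i$.

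Next I would invoke the preceding proposition: since $\rho_{|G^{der}}$ is orthogonal, either all $\rho_i$ are orthogonal (with $s$ arbitrary) or all $\rho_i$ are symplectic (with $s$ even, so that the tensor product is orthogonal). I would then enumerate, using Figures 3 and 4, all minuscule candidates and, for each possibility of type and of $s$, compute $n=\dim\rho$. The case-by-case exclusion then proceeds as follows:
\begin{itemize}
\item Type $B_\ell$ with $\omega_\ell$ and the half-spin representations of $D_\ell$ (in either the orthogonal or symplectic case) give $n$ a power of $2$.
\item The orthogonal type $D_\ell$ with $\omega_1$ gives $n=(2\ell)^s$; for $s\geq 2$ even this is of the form $(2a)^k$ with $a=\ell\geq 3$ and $k$ even $\geq2$, and the symplectic type $C_\ell$ (which requires $s$ even) gives $(2\ell)^s$ with $\ell\geq2$, also of that shape.
\item Orthogonal type $A_\ell$ with $r=(\ell+1)/2$ even gives $n=\binom{2r}{r}^s$, a power of $\binom{2k}{k}$ with $k$ even.
\item Symplectic type $A_\ell$ with $r=(\ell+1)/2$ odd (and $s$ even) gives an even power of $\binom{2k}{k}$ with $k$ odd.
\end{itemize}
Each of these values is ruled out by hypothesis. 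The only remaining case is $s=1$ with type $D_\ell$ and minuscule weight $\omega_1$, which produces $G^{der}_{\overline{\mathbb{Q}}}\cong SO_{2\ell,\overline{\mathbb{Q}}}=SO_{n,\overline{\mathbb{Q}}}$. Since $G^{der}$ is almost $\mathbb{Q}$-simple, descent gives $G^{der}=SO_{n,\mathbb{Q}}$, and the identity $G=\mathbb{G}_m\cdot G^{der}$ yields the desired conclusion.

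The main obstacle is the delicate subcase of orthogonal type $D_\ell$ with $\omega_1$ and $s\geq3$ odd, where $n=(2\ell)^s$ is not of the form $(2a)^k$ with $k$ even. This configuration must be excluded by a finer analysis of the strong Mumford--Tate condition, mirroring Pink's argument in his Proposition 4.5: the constraint that each cocharacter $\mu_i$ has weights in $\{0,1\}$ on $\rho$, together with the tensor-product structure, forces at most one nontrivial simple-factor component of each $\mu_i$, and the Galois-conjugacy built into the definition of \emph{strong} Mumford--Tate pair then constrains the permutation action of $\Gal(\overline{K}/K)$ on the simple factors. Combining these two ingredients rules out the remaining case. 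Once this last configuration is eliminated, the case-by-case exclusion is complete and the proposition follows.
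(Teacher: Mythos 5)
Your overall route coincides with the paper's: reduce via Proposition \ref{prop 4.4} to an exterior tensor product of minuscule representations of simple factors all of the same type, use the preceding proposition to split into the two cases (all factors orthogonal with $s$ arbitrary, or all symplectic with $s$ even), and then read the possible dimensions off the tables of minuscule weights and exclude them by the hypotheses; your enumeration agrees with the paper's sets $\Sigma_1$ and $\Sigma_2$.

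The gap lies in the step you add at the end, for the subcase you correctly single out: type $D_\ell$ with weight $\omega_1$ and an odd number $s\geq 3$ of factors, so $n=(2\ell)^s$. You claim this configuration is eliminated by a finer use of the strong Mumford--Tate condition (each cocharacter nontrivial on at most one simple factor, plus Galois conjugacy of the cocharacters). That argument cannot succeed, because such configurations genuinely \emph{are} strong Mumford--Tate pairs of weights $\{0,1\}$ with absolutely irreducible orthogonal representation: take a $\Q$-form of $\mathrm{SO}_{2\ell}^{\times s}$ on which $\Gal(\overline{K}/K)$ permutes the factors transitively (for instance $\mathrm{Res}_{F/\Q}\mathrm{SO}_{2\ell}$ with $[F:\Q]=s$), acting through the tensor-induced representation. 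The cocharacter $t\mapsto \mathrm{diag}(tI_\ell,I_\ell)\otimes I\otimes\cdots\otimes I$ lies in $\G_m\cdot G^{der}$ (it is an orthogonal similitude in the identity component), has weights in $\{0,1\}$ on the tensor product, and its Galois conjugates are the analogous cocharacters supported on the other factors, so their $G(\overline{K})$-conjugates generate all of $G$. This is exactly the orthogonal analogue of the Mumford-type examples ($\mathrm{SL}_2^{\times 3}$ on the $8$-dimensional representation) which, in Pink's symplectic Proposition 4.7, are removed by a hypothesis on the dimension and not by the cocharacter condition. Hence the odd-$s$ powers $(2\ell)^s$ (e.g.\ $n=10^3$) can only be dealt with by excluding them numerically, which the four stated hypotheses do not do in general, so the step ``combining these two ingredients rules out the remaining case'' would fail. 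For comparison, the paper's own proof never treats products of several standard $D_\ell$-factors: its first case only removes the four non-standard orthogonal types, so these dimensions do not appear in $\Sigma'$ either; your proposal has the merit of making the issue visible, but the resolution you sketch is not valid.
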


\begin{rmk}
Actually we prove a slightly stronger result than the stated above. For details see the proof, in particular \eqref{eq:set}. Let us remark that the set of dimensions excluded in Proposition \ref{prop 6.10} is larger and contains the set $\Sigma'$ defined in \eqref{eq:set}.
\end{rmk}

\begin{proof}
Let $T_{\ell}\in \{A_{\ell}, B_{\ell}, C_{\ell}, D_{\ell}\}$ be one of the four simple types in figure \ref{table minuscule}. The main goal of this proof is to identify all possible values of $h$ for which the representation is not orthogonal. In order to do this, we study every possible combination that can occur. Let  $n=2h=\dim \rho$.

The \textbf{first case} we consider is when the representation associated to the type $T_{\ell}$ is orthogonal. Hence we want to obtain the type $D_k$ associated to the standard orthogonal representation. In order to achieve this, we have to remove the four remaining cases. Let us consider a more detailed version of figure \ref{table orthogonale}, (see figure \ref{orthogonales}).

\begin{figure}[!h]
\begin{center}
\begin{tabular}{|c|c|c|c|}
   \hline 
   Root system & Minuscule weight & Dimension & Duality properties \\
   \hline 
   $A_{4k+3} \, (k\geq0)$ & $\omega_{1}, ..., \omega_{4k+3}$ & $\binom{4k+4}{2k+2}$ & $+1$\\ 
   \hline 
   $B_{4k} \, (k\geq1)$ & $\omega_{4k}$ & $2^{4k}$ & $+1$\\ 
   \hline 
   $B_{4k+3} \, (k\geq0)$ & $\omega_{4k+3}$ & $2^{4k+3}$ & $+1$\\ 
   \hline 
   {\color{red}$D_k \, (k\geq3)$} & {\color{red}$\omega_1$} & {\color{red}$2k$} & {\color{red}$+1$}\\
   \hline 
   $D_{4k} \, (k\geq1)$&  $\omega_{4k-1}, \, \omega_{4k}$ &   $2^{4k-1}$ & $+1$ \\   
   \hline
\end{tabular}
\caption{Minuscule weights : orthogonal}\label{orthogonales}
\end{center}
\end{figure}
We present here the list of the four remaining cases that need to be excluded:
\begin{itemize}
\item \underline{Type $D_{4k}$} of dimension $2^{(4k-1)s}$ where $k\geq 1$ and $s\geq1$, therefore for every $k\geq 1$ and $s\geq1$
$$h\neq 2^{(4k-1)s-1}.$$ 

\item \underline{Type $B_{4k+3}$} of dimension $2^{(4k+3)s}$ where $k\geq 0$ and $s\geq1$, hence for every $k\geq 0$ and $s\geq1$
$$h\neq 2^{(4k+3)s-1}.$$

\item \underline{Type $B_{4k+3}$} of dimension $2^{(4k)s}$ where $k\geq 1$ and $s\geq1$, thus for every $k\geq 1$ and $s\geq1$
$$h\neq 2^{4ks-1}.$$ 

\item \underline{Type $A_{4k+3}$} of dimension $(\binom{4k+4}{2k+2})^s$ where $k\geq 0$ and $s\geq1$, therefore for every  $k\geq 0$ and $s\geq1$
$$h\neq \frac{1}{2}(\binom{4k+4}{2k+2})^s.$$
\end{itemize} 
Let us remark that the two first cases give the same conditions for the integer $h$. We can therefore give the following set:
\begin{equation}
\Sigma_1=\left\{2^{(4k+3)s-1}, \, k\geq 0, \, s\geq1\right\}\bigcup\left\{2^{4ks-1}, \, k\geq 1, \, s\geq1\right\}\bigcup\left\{\frac{1}{2}\big(\binom{4k+4}{2k+2}\big)^s, \, k\geq 0, \, s\geq1\right\}.
\end{equation}
The previous set $\Sigma_1$ contains all the possible integers that correspond to the dimension of the representations that are not orthogonal.

The \textbf{second case} we consider is the one where the type $T_{\ell}$ is associated to a symplectic representation and the number of such representations is an even number. Let us consider the following figure which corresponds to the symplectic representations (see figure \ref{symplectiques}):

\begin{figure}[!h]
\begin{center}
\begin{tabular}{|c|c|c|c|}
   \hline 
   Root system & Minuscule weight & Dimension & Duality properties \\
   \hline 
   $A_{4k+1} \, (k\geq0)$ & $\omega_{1}, ..., \omega_{4k+1}$ & $\binom{4k+2}{2k+1}$ & $-1$\\ 
   \hline 
   $B_{4k+1} \, (k\geq1)$ & $\omega_{4k+1}$ & $2^{4k+1}$ & $-1$\\ 
   \hline 
   $B_{4k+2} \, (k\geq0)$ & $\omega_{4k+2}$ & $2^{4k+2}$ & $-1$\\ 
   \hline 
   $C_k \, (k\geq2)$ & $\omega_1$ & $2k$ & $-1$\\
   \hline 
   $D_{4k+2}  \, (k\geq1)$&  $\omega_{4k+1}, \, \omega_{4k+2}$ &   $2^{4k+1}$ & $-1$ \\   
   \hline
\end{tabular}
\caption{Minuscule weights: symplectic}\label{symplectiques}
\end{center}
\end{figure}
As in the previous case, we presente the list of the five remaining cases that need to be excluded. 
\begin{itemize}
\item \underline{Type $D_{4k+2}$} of dimension $2^{(4k+1)2s}$ where $k\geq 1$ and $s\geq1$, therefore for every $k\geq 1$ and $s\geq1$ 
$$h\neq 2^{2s(4k+1)-1}.$$

\item \underline{Type $C_{k}$} of dimension $(2k)^{2s}$ where $k\geq 2$ and $s\geq1$, hence for every $k\geq 2$ and $s\geq1$
$$h\neq 2^{2s-1}k^{2s}.$$

\item \underline{Type $B_{4k+2}$} of dimension $2^{(4k+2)2s}=2^{4s(2k+1)}$ where $k\geq 0$ and $s\geq1$, thus for every $\alpha\geq 1$ odd and $s\geq1$
$$h\neq 2^{4s\alpha-1}.$$

\item \underline{Type $B_{4k+1}$} of dimension $2^{(4k+1)2s}$ where $k\geq 1$ and $s\geq1$, therefore for every $k\geq 1$ and $s\geq1$ $$h\neq 2^{2s(4k+1)-1}.$$

\item \underline{Type $A_{4k+1}$} of dimension $(\binom{4k+2}{2k+1})^{2s}$ where $k\geq 0$ and $s\geq1$, hence for every $k\geq 0$ and $s\geq1$
$$h\neq \frac{1}{2}(\binom{4k+2}{2k+1})^{2s}.$$

\end{itemize} 

Let us remark that the first and the fourth cases give the same conditions for the integer $h$. We define the following set of numbers:
\begin{equation}
\begin{aligned}
\Sigma_2=& \{2^{2s(4k+1)-1}, \, k\geq 1, \, s\geq1\}\bigcup\{2^{4\alpha s-1}, \, \alpha \, \text{odd}, \, s\geq1\}\\
& \bigcup\{2^{2s-1}k^{2s}, \, k\geq 2, \, s\geq1\}\bigcup\left\{\frac{1}{2}\big(\binom{4k+2}{2k+1}\big)^{2s}, \, k\geq 0, \, s\geq1\right\}.
\end{aligned}
\end{equation}
The previous set $\Sigma_2$ contains all the possible integers that correspond to the dimension of the representations that are not orthogonal.

Let us concluded by saying that if $h\notin \Sigma_1 \bigcup \Sigma_2$ the representation $\rho_{|G^{der}}$ is absolutely irreducible, orthogonal and corresponds to the standard representation, where $\Sigma_1$ and $\Sigma_2$ are the sets of numbers which were defined above.

\end{proof}

Let us consider the following set:
\begin{equation}
\Sigma':=\Sigma_1\bigcup\Sigma_2.
\end{equation}

After simplifications one obtains as in \eqref{sigmaset}:
\begin{equation}\label{eq:set}
\begin{aligned}
\Sigma':=\bigcup_{s>1} & \left\{ \left\{2^{(4k+3)s-1}, \, k\geq 0\right\} \cup \left\{2^{4ks-1}, \, k\geq 1\right\} \cup \left\{2^{2s(4k+1)-1}, \, k\geq 1\right\} \cup \left\{2^{2s-1}k^{2s}, \, k\geq 2\right\}\right.\\
&\left. \cup \left\{\frac{1}{2}\big(\binom{4k+4}{2k+2}\big)^s, \, k\geq 0\right\} \cup \left\{\frac{1}{2}\big(\binom{4k+2}{2k+1}\big)^{2s}, \, k\geq 0\right\}\right\}.
\end{aligned}
\end{equation}

Therefore we can state the following theorem which generalizes results of \cite{Pink98}:

\begin{thm}\label{theo Pink}
Let $A$ be a simple abelian variety of type III such that the center of $\Endzero(A)$ is $\Q$. Assume that the relative dimension $h$ is not in the set $\Sigma'$. Then, the abelian variety is fully of Lefschetz type. In particular, the Mumford--Tate Conjecture holds for the abelian variety $A$.
\end{thm}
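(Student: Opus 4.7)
The strategy is to apply Proposition \ref{prop 6.10} to the $\lambda$-adic Galois representation attached to $A$, exploiting the hypothesis $E=Z(\Endzero(A))=\Q$ so that the setup of Pink's machinery (as adapted to the orthogonal/type III situation in the previous subsection) is available.

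First, I would fix a prime $\ell \notin \mathcal{S}$; since $E=\Q$, the unique place $\lambda \mid \ell$ is $\lambda=\ell$, and $\Elam=\Ql$. By \cite[Theorem 3.23]{BGK}, the decomposition $\Vl=\Vlam\oplus \Vlam$ is available and the representation $\rho_{\lambda}^{\circ}:G_K\to \GL(\Vlam)$ is absolutely irreducible, with $\dim_{\Ql}\Vlam=2h$. Moreover, the pairing $\phi_{\lambda^{\infty}}^{\circ}$ coming from the polarization is symmetric and non-degenerate, so $\rho_{\lambda}^{\circ}$ is orthogonal. Denote by $\Glam$ the Zariski closure of $\rho_{\lambda}^{\circ}(G_K)$; after enlarging $K$ if necessary I may assume $\Glam$ is connected.

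Next, I would verify that the pair $(\Glam,\rho_{\lambda}^{\circ})$ is a strong Mumford--Tate pair of weights $\{0,1\}$. The weak pair structure follows from Deligne's classical result that the image of the Hodge cocharacter $\mu_{\infty}:\Gm\to \MT_{\C}$ (which has weights $\{0,1\}$ on $H^1(A,\C)$) together with its conjugates generate $\MT$, combined with the inclusion $\Glam^{\circ}\subset \MT\otimes \Qlam$ (Deligne--Piatetski-Shapiro--Borovoi) so that these cocharacters, transported via a choice of embedding $\overline{\Q}\hookrightarrow \overline{\Ql}$, live inside $\Glam$ and have the required weights on $\rho_{\lambda}^{\circ}$. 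The hypothesis $E=\Q$ then lets me invoke \cite[Fact 5.9]{Pink98} to upgrade this weak pair to a strong pair.

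Then I would apply Proposition \ref{prop 6.10} to $(\Glam,\rho_{\lambda}^{\circ})$: since $\rho_{\lambda}^{\circ}$ is absolutely irreducible and orthogonal on $\Glam^{der}$, and since $n=\dim \rho_{\lambda}^{\circ}=2h$ avoids the excluded list precisely because $h\notin \Sigma'$ (compare the discussion producing \eqref{eq:set}), the proposition yields
\[
\Glam \;=\; \Gm\cdot \SO_{2h,\Ql}.
\]
Combining this with the standard inclusions $\Glam^{\circ}\subset \MT\otimes \Qlam \subset \mathcal{L}(A)\otimes \Qlam$, together with the explicit computation $\mathcal{L}(A)\otimes \Qlam=\Gm\cdot \SO_{2h,\Ql}$ that holds in the type III case with $E=\Q$, forces equality throughout. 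This simultaneously gives $\MT=\mathcal{L}(A)$ and the Mumford--Tate Conjecture at $\ell$, hence at every prime by \cite[Theorem 4.3]{LP95}; that is, $A$ is fully of Lefschetz type.

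The main obstacle I anticipate is step two: cleanly establishing the strong Mumford--Tate pair structure for $\Glam$ itself (as opposed to $\MT$) requires careful bookkeeping of how the Hodge cocharacters descend through the inclusion $\Glam^{\circ}\subset \MT\otimes \Qlam$ and how the Galois action on the set of cocharacters interacts with the hypothesis $E=\Q$. Once this is in place the remainder is a direct application of Proposition \ref{prop 6.10} and the standard comparison with the Lefschetz group, paralleling the strategy used by Pink in the $\End(A)=\Z$ case.
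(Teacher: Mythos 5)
Your overall route is the same as the paper's: fix $\ell$, use \cite[Theorem 3.23]{BGK} to get the absolutely irreducible orthogonal $\lambda$-adic representation $\rho_{\lambda}^{\circ}$ on $\Vlam$ of dimension $2h$, feed the pair $(\Glam,\rho_{\lambda}^{\circ})$ into Proposition \ref{prop 6.10} (whose proof, via \eqref{eq:set}, is exactly what excludes $h\in\Sigma'$), identify the resulting $\Gm\cdot\mathrm{SO}_{2h}$ with the Lefschetz group, sandwich $\MT$ between $\Glam^{\circ}$ and $\mathcal{L}(A)$, and spread to all primes with \cite[Theorem 4.3]{LP95}. That is precisely how the theorem is obtained here, and your use of $E=\Q$ together with \cite[Fact 5.9]{Pink98} to pass from weak to strong pairs is also the paper's.

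The one step where your justification as written would fail is the weak Mumford--Tate pair property of $(\Glam,\rho_{\lambda}^{\circ})$ itself. Deligne's statement that $\MT$ is generated by the conjugates of the Hodge cocharacter $\mu_{\infty}$, combined with the inclusion $\Glam^{\circ}\subset \MT\otimes\Qlam$, does not give you cocharacters \emph{inside} $\Glam$, nor that they generate $\Glam^{\circ}$; asserting that the Hodge cocharacters ``live inside $\Glam$'' essentially presupposes the Mumford--Tate conjecture you are trying to prove. The correct input (and the one implicitly used here, following Pink's treatment of the $\End(A)=\Z$ case) is $\ell$-adic Hodge theory: at a place of $K$ above $\ell$ the Hodge--Tate decomposition produces a cocharacter of $\Glam$ over $\C_{\ell}$ with weights $\{0,1\}$ on $\Vlam$, and it is a nontrivial theorem of Pink \cite[Theorem 5.10]{Pink98} (building on Serre's results on Hodge--Tate modules and Frobenius tori) that the connected monodromy group is generated by the $\Glam(\overline{\Q}_{\ell})$-conjugates of these cocharacters, i.e.\ that $(\Glam^{\circ},\rho_{\lambda}^{\circ})$ is a weak Mumford--Tate pair of weights $\{0,1\}$. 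You correctly flagged this as the delicate point; the repair is simply to cite Pink's theorem rather than attempt to transport the Hodge cocharacter of $\MT$, after which the rest of your plan goes through as in the paper.
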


Using Theorems \ref{theo BGK}, \ref{theo Pink}, some results of Ichikawa  \cite{Ich} and Lombardo \cite[Th\'eor\`eme 4.1]{LomHl} and \cite[Corollaires 1.8 and 1.15]{HR} one can prove that the product of simple abelian varieties of type I, II or III that are fully of Lefschetz type is also of Lefschetz type. This last results prove Corollaries \ref{corollaire A simple} and \ref{corollaire A produit}.\\

\textbf{Acknowledgments} I thank Marc Hindry for introducing this problem to me and for the enlightening discussions we had. I also thank Yunqing Tang and Davide Lombardo for helpful discussions and remarks on previous drafts of this paper. Finally I thank the anonymous referee for her/his comments and suggestions on both the mathematics and the writing of this paper.
%


\begin{bibdiv}
\begin{biblist}


\bib{BGKIetII}{article}{,
    AUTHOR = {Banaszak, G.},
    author={Gajda, W.},
    author={Kraso\'n, P.},
     TITLE = {On the image of {$l$}-adic {G}alois representations for abelian varieties of type {I} and {II}},
   JOURNAL = {Doc. Math.},
  FJOURNAL = {Documenta Mathematica},
      YEAR = {2006},
    NUMBER = {Extra Vol.},
     PAGES = {35--75},
      ISSN = {1431-0635},
   MRCLASS = {11F80 (11G10)},
  MRNUMBER = {2290584},
}

\bib{BGK}{article}{,
    AUTHOR = {Banaszak, G.},
    author={Gajda, W.},
    author={Kraso\'n, P.},
     TITLE = {On the image of {G}alois {$\ell$}-adic representations for abelian varieties of type {III}},
   JOURNAL = {Tohoku Math. J. (2)},
  FJOURNAL = {The Tohoku Mathematical Journal. Second Series},
    VOLUME = {62},
      YEAR = {2010},
    NUMBER = {2},
     PAGES = {163--189},
      ISSN = {0040-8735},
   MRCLASS = {14K15 (11G10 17B45)},
  MRNUMBER = {2663452},
MRREVIEWER = {Alessandra Bertapelle},
       DOI = {10.2748/tmj/1277298644},
       URL = {https://doi.org/10.2748/tmj/1277298644}
}
	
\bib{lange}{book}{,
   author = {{Bi}rkenhake, C.},
   author={{La}nge, H.},
   title = {Complex abelian varieties},
   year = {2004},
   publisher= {Springer}}
   
\bib{bourbaki}{book}{,
   title =     {\'El\'ements de Math\'ematique. Groupes et alg\`ebres de Lie: Chapitres 7 et 8},
   author =    {Bourbaki, N.},
   publisher = {Springer},
   year =      {2006},
   edition =   {2nd}}

\bib{HRce}{article}{,
	title = {Torsion dans un produit de courbes elliptiques},
	volume = {25},
	journal = {J. Ramanujan Math. Soc},
	author = { {Hi}ndry, M.},
	author={{Ra}tazzi, N.},
	year = {2010},
	pages = {1--31}}
	
\bib{HR10}{article}{,
	title = {Points de torsion sur les vari\'et\'es ab\'eliennes de type {GSp}},
	volume = {11},
	issn = {1474-7480, 1475-3030},
	url = {http://www.journals.cambridge.org/abstract_S147474801000023X},
	doi = {10.1017/S147474801000023X},
	language = {en},
	number = {01},
	urldate = {2016-11-10},
	journal = {Journal of the Institute of Mathematics of Jussieu},
	author = { {Hi}ndry, M.},
	author={{Ra}tazzi, N.},
	month = {jan},
	year = {2012},
	pages = {27--65}}
	
\bib{HR}{article}{,
	title = {Torsion pour les vari\'et\'es ab\'eliennes de type {I} et {II}},
	volume = {10},
	number = {9},
	urldate = {2017-01-13},
	journal = {Algebra \& Number Theory},
	author = { {Hi}ndry, M.},
	author={{Ra}tazzi, N.},
	month = {nov},
	year = {2016},
	pages = {1845--1891}}
 
 \bib{Hum}{book}{,
 	author = {Humphreys, J.},
 	title = {Linear algebraic groups},
	year = {1981},
	series={Graduate texts in Mathematics},
	publisher= {Springer-Verlag Berlin}}

\bib{Ich}{article}{,
	author = {Ichikawa, T.},
	journal = {Math. Ann.},
	keywords = {Hodge groups; Hodge cycles; non-simple abelian varieties; Tate modules of abelian varieties over $\ell$-adic local fields},
	number = {1},
	pages = {133-142},
	title = {Algebraic groups associated with abelian varieties},
	url = {http://eudml.org/doc/164773},
	volume = {289},
	year = {1991}}

\bib{LomHl}{article}{,
	author = {Lombardo, D.},
	title = {{O}n the $\ell$-adic {G}alois representations attached to nonsimple abelian varieties},
	year = {2016},
	journal = {Ann. Inst. Fourier},
	volume= {66},
	number = {3},
	pages = {1217--1245}}
	
\bib{LP95}{article}{,
	author={{La}rsen, M.},
	author={{Pi}nk, R.},
	title={Abelian varieties, $\ell$-adic representations, and $\ell$-independence},
	journal={Math. Ann.},
	number={3},
	year={1995},
	pages={561--579}}

\bib{Masser}{article}{,
   author =       {Masser, D.},
   title =        {Lettre \`a {D}aniel {B}ertrand du 10 novembre 1986},
   year={1986}}

\bib{MilneLC}{article}{,
	title = {Lefschetz classes on abelian varieties},
	volume = {96},
	url = {https://pdfs.semanticscholar.org/d3a7/41be472dd12d34c958305a055ac4405c623b.pdf},
	number = {3},
	urldate = {2017-02-16},
	journal = {Duke mathematical journal},
	author = {Milne, J.S.},
	year = {1999},
	pages = {639--670}}

\bib{Mum}{book}{,
   author = {Mumford, D.},
   title = {Abelian varieties},
   year = {1974},
   publisher= {Oxford Univ. Press},
   pages = {192--203}}

\bib{Murty84}{article}{,
   title = {Exceptional {H}odge classes on certain abelian varieties},
   author = {Murty, V.K.},
   journal = {Math. Ann.},
   volume = {268},
   year = {1984},
   pages = {197--206}}
      
\bib{Noot}{article}{,
	title = {Abelian varieties - {Galois} representation and properties of ordinary reduction},
	volume = {97},
	url = {http://archive.numdam.org/article/CM_1995__97_1-2_161_0.pdf},
	number = {1-2},
	urldate = {2017-01-15},
	journal = {Compositio Mathematica},
	author = {Noot, R.},
	year = {1995},
	pages = {161--171},
	file = {download:/Users/VCF/Library/Application Support/Firefox/Profiles/3dmf2m5j.default-1429014758613/zotero/storage/T2XCN6RI/download.pdf:application/pdf}}

\bib{Oesterle}{article}{,
   title = {R\'eduction modulo $p^n$ des sous-ensembles analytiques ferm\'es de $\mathbb{Z}_{p}^{N}$},
   author = {Oesterl\'e, J.},
   journal = {Inventiones mathematicae},
   volume = {66},
   year = {1982},
   pages = {325--341}}

\bib{Pink98}{article}{,
	author={Pink, R.},
	title={$\ell$-adic algebraic monodromy groups, cocharacters, and the {M}umford-{T}ate conjecture},
	journal={J. reine angew. Math},
	volume={495},
	year={1998}}   

\bib{Nicolas}{article}{,
	title={Borne sur la torsion dans les vari\'et\'es ab\'eliennes de type {CM}},
	journal={Annales Scientifiques de l'\'Ecole Normal Sup\'erieur},
	volume={40},
	number={6},
	pages={951--983},
	year={2007},
	author={Ratazzi, N.}}
		
\bib{SerreApplications}{article}{,
	title={Quelques applications du th\'eor\`eme de densit\'e de {C}hebotarev},
	journal={Publications Math\'ematiques de l'IH\'ES},
	volume={54},
	author={Serre, J.-P.},
	year={1981},
	pages={123--201}}
	
\bib{Serre}{article}{,
	title={Un crit\`ere d'ind\'ependance pour une famille de repr\'esentations $\ell$-adiques},
	journal={Comment. Mathe. Helv},
	volume={88},
	number={3},
	author={Serre, J.-P.},
	year={2013},
	pages={541--554}}
	
\bib{Ten}{book}{,
    AUTHOR = {Tenenbaum, G.},
     TITLE = {Introduction to analytic and probabilistic number theory},
    SERIES = {Graduate Studies in Mathematics},
    VOLUME = {163},
   EDITION = {Third},
      NOTE = {Translated from the 2008 French edition by Patrick D. F. Ion},
 PUBLISHER = {American Mathematical Society, Providence, RI},
      YEAR = {2015},
     PAGES = {xxiv+629},
      ISBN = {978-0-8218-9854-3},
   MRCLASS = {11-02 (11Kxx 11Mxx 11Nxx)},
  MRNUMBER = {3363366}		
}

\bib{David}{article}{,
   title = {Torsion bounds for a fixed abelian variety and varying number field},
   author = {Zywina, D.},
   journal = {preprint},
   year = {2017}}

\end{biblist}
\end{bibdiv}

\end{document}